\pgfplotsset{width=7.5cm,compat=1.9}
\newcommand\FanoPlane[1][1cm]{%
\begin{tikzpicture}[
mydot/.style={
  draw,
  circle,
  fill=black,
  inner sep=1.5pt}
]
\draw
  (0,0) coordinate[label=below left:$\Delta_1$] (A) --
  (#1,0) coordinate[label=below right:$\Delta_3$] (B) --
  ($ (A)!.5!(B) ! {sin(60)*2} ! 90:(B) $) coordinate[label=above:$\ell$] (C) -- cycle;
\coordinate[label={[label distance=#1*0.05]right:$\Phi_2$}] (O) at
  (barycentric cs:A=1,B=1,C=1);
\draw (O) circle [radius=#1*1.717/6];
\draw (C) -- ($ (A)!.5!(B) $) coordinate[label=below:$\Delta_2$] (LC);
\draw (A) -- ($ (B)!.5!(C) $) coordinate[label=right:$\Phi_3$] (LA);
\draw (B) -- ($ (C)!.5!(A) $) coordinate[label=left:$\Phi_1$] (LB);
\foreach \Nodo in {A,B,C,O,LC,LA,LB}
  \node[mydot] at (\Nodo) {};
\end{tikzpicture}%
}
\DeclareMathOperator{\Aut}{Aut}
\DeclareMathOperator{\PSL}{PSL}
\DeclareMathOperator{\PSiL}{P\Sigma L}
\DeclareMathOperator{\PGL}{PGL}
\DeclareMathOperator{\PSU}{PSU}
\DeclareMathOperator{\PGU}{PGU}
\DeclareMathOperator{\pg}{PG}
\DeclareMathOperator{\PGaU}{P\Gamma U}
\DeclareMathOperator{\Sp}{Sp}
\DeclareMathOperator{\Co}{Co}
\DeclareMathOperator{\Ree}{Ree}
\DeclareMathOperator{\HS}{HS}
\DeclareMathOperator{\PGaL}{P\Gamma L}
\DeclareMathOperator{\AGaL}{A\Gamma L}
\DeclareMathOperator{\GL}{GL}
\DeclareMathOperator{\Sym}{Sym}
\DeclareMathOperator{\soc}{soc}
\DeclareMathOperator{\supp}{supp}
\DeclareMathOperator{\alt}{A}
\DeclareMathOperator{\s}{S}
\DeclareMathOperator{\mg}{M}
\DeclareMathOperator{\wt}{wt}
\DeclareMathOperator{\diff}{diff}
\DeclareMathOperator{\PermAut}{PermAut}
\renewcommand{\b}{\mathbf}
\renewcommand{\leq}{\leqslant}
\renewcommand{\geq}{\geqslant}
\newcommand{\ra}{\rightarrow}
\newcommand{\F}{\mathbb F}
\newcommand{\G}{\mathcal{G}}
\newcommand{\D}{\mathcal D}
\newcommand{\B}{\mathcal B}
\renewcommand{\L}{\mathcal L}
\renewcommand{\H}{\mathcal H}
\renewcommand{\P}{\mathcal P}
\renewcommand{\S}{\mathcal S}
\newcommand{\RM}{\mathcal{RM}}
\newcommand{\NR}{\mathcal{NR}}
\newcommand{\PH}{\mathcal{PH}}
\newcommand{\EG}{\mathcal{E}}
\newcommand{\SG}{\mathcal{S}}
\newcommand{\E}{\mathcal E}
\newcommand{\Z}{\mathbb Z}
\theoremstyle{plain}
\newtheorem{lemma}{Lemma}
\newtheorem{theorem}[lemma]{Theorem}
\newtheorem{proposition}[lemma]{Proposition}
\theoremstyle{definition}
\newtheorem{definition}[lemma]{Definition}
\newtheorem{example}[lemma]{Example}
\newtheorem{remark}[lemma]{Remark}
\newtheorem{hypothesis}{Hypothesis}
\numberwithin{equation}{section}
\numberwithin{lemma}{section}
\begin{document}


\title{On the Classification of Binary Completely Transitive Codes with Almost-Simple Top-Group
}

\author{
 Robert F. Bailey$^1$\footnote{Supported by an NSERC Discovery Grant.}
 \and
 Daniel R. Hawtin$^2$\footnote{Supported by the Croatian Science Foundation under the project 6732.}
}

\date{
 \small{
  \emph{
   $^1$School of Science and the Environment (Mathematics),\\
   Memorial University of Newfoundland, Grenfell Campus,\\ 
   Corner Brook, NL, A2H 6P9, Canada.\\
  }
  \href{mailto:rbailey@grenfell.mun.ca}{rbailey@grenfell.mun.ca}\\
  \vspace{0.25cm}
 }
 \small{
  \emph{
   $^2$Department of Mathematics, University of Rijeka\\
   Rijeka, Croatia, 51000\\
  }
  \href{mailto:dan.hawtin@gmail.com}{dan.hawtin@gmail.com}\\
  \vspace{0.25cm}
 }
 \today
}

\maketitle

\begin{abstract}
 A code $C$ in the Hamming metric, that is, is a subset of the vertex set $V\varGamma$ of the Hamming graph $\varGamma=H(m,q)$, gives rise to a natural \emph{distance partition} $\{C,C_1,\ldots,C_\rho\}$, where $\rho$ is the covering radius of $C$. Such a code $C$ is called \emph{completely transitive} if the automorphism group $\Aut(C)$ acts transitively on each of the sets $C$, $C_1$, \ldots, $C_\rho$. A code $C$ is called $2$-neighbour-transitive if $\rho\geq 2$ and $\Aut(C)$ acts transitively on each of $C$, $C_1$ and $C_2$.
 
 Let $C$ be a completely transitive code in a binary ($q=2$) Hamming graph having full automorphism group $\Aut(C)$ and minimum distance $\delta\geq 5$. Then it is known that $\Aut(C)$ induces a $2$-homogeneous action on the coordinates of the vertices of the Hamming graph. The main result of this paper classifies those $C$ for which this induced $2$-homogeneous action is \emph{not} an affine, linear or symplectic group. We find that there are $13$ such codes, $4$ of which are non-linear codes. Though most of the codes are well-known, we obtain several new results. First, a non-linear completely transitive code that does not explicitly appear in the existing literature is constructed, as well as a related non-linear code that is $2$-neighbour-transitive but not completely transitive. Moreover, new proofs of the complete transitivity of several codes are given. Additionally, we answer the question of the existence of distance-regular graphs related to the completely transitive codes appearing in our main result.
\end{abstract}

\section{Introduction}

A subset $C$ of the vertex set $V\varGamma$ of the Hamming graph $\varGamma=H(m,q)$ is called a \emph{code}, the elements of $C$ are called \emph{codewords}, and the subset $C_i$ of $V\varGamma$ consisting of all vertices of $H(m,q)$ having nearest codeword at Hamming distance $i$ is called the \emph{set of $i$-neighbours} of $C$. Some important generalisations of perfect codes were introduced in the 1970s: \emph{uniformly packed codes}, introduced by Semakov, Zinoviev and Zaitsev \cite{SemZinZai71}, and the classes of \emph{completely regular} and \emph{$s$-regular} codes, defined by Delsarte \cite{delsarte1973algebraic}. Uniformly packed codes were shown in \cite{SemZinZai71} to be completely regular in the sense of \cite{delsarte1973algebraic}, in particular, before the formal definition of completely regular was introduced.  

The parameters of perfect codes over prime-power alphabets have been classified, and codes satisfying these parameters found; see \cite{tietavainen1973nonexistence} or \cite{Zinoviev73thenonexistence}. In contrast, the class of completely regular codes is vast, with similar classification results remaining an active area of research. Several recent results have been obtained by Borges et al.~\cite{borgesrho1,borges2012new,Borges201468}. For a survey of results on completely regular codes see \cite{borges2019completely}.

Completely regular and $s$-regular codes are defined in terms of regularity conditions on the \emph{distance partition} $\{C,C_1,\ldots, C_\rho\}$ of a code $C$, where $\rho$ is the \emph{covering radius}. The focus of current paper is the algebraic analogues, defined directly below, of the classes of completely regular and $s$-regular codes. Note that the group $\Aut(C)$ is the setwise stabiliser of $C$ in the full automorphism group of $H(m,q)$. In particular, if $C$ is linear then we consider the group of translations by codewords of $C$ to be contained in $\Aut(C)$.

\begin{definition}\label{sneighbourtransdef}
 Let $C$ be a code in $H(m,q)$ with covering radius $\rho$, let $s\in\{1,\ldots,\rho\}$, and $X\leq\Aut(C)$. Then $C$ is said to be:
 \begin{enumerate}
  \item \emph{$(X,s)$-neighbour-transitive} if $X$ acts transitively on each of the sets $C,C_1,\ldots, C_s$.
  \item \emph{$X$-neighbour-transitive} if $C$ is $(X,1)$-neighbour-transitive.
  \item \emph{$X$-completely transitive} if $C$ is $(X,\rho)$-neighbour-transitive.
  \item \emph{$s$-neighbour-transitive} if $C$ is $(\Aut(C),s)$-neighbour-transitive, \emph{neighbour-transitive} if $C$ is $\Aut(C)$-neighbour-transitive, or \emph{completely transitive} if $C$ is $\Aut(C)$-\emph{completely transitive}.
 \end{enumerate}
\end{definition}

A variant of the above concept of complete transitivity was introduced for linear codes by Sol{\'e} \cite{sole1990completely}, with the above definition first appearing in \cite{Giudici1999647}. Completely transitive codes form a subfamily of completely regular codes, and $s$-neighbour transitive codes are a sub-family of $s$-regular codes, for each $s$. Characterisation of certain families of neighbour-transitive codes in Hamming graphs have been achieved (see \cite{ntrcodes,gillespieCharNT,gillespiediadntc}), as well as several classification results of subfamilies of neighbour-transitive codes in Johnson graphs (see \cite{durante2014sets,marksphd,liebler2014neighbour,neunhoffer2014sporadic}). The results in this paper depend upon the series of papers \cite{ef2nt,aas2nt,ondimblock,minimal2nt} on $2$-neighbour-transitive codes, which form part of the second author's PhD thesis.

Each vertex of $H(m,q)$ is of the form $\alpha=(\alpha_1,\ldots,\alpha_m)$, where the entries $\alpha_i$ come from an \emph{alphabet} $Q$ of size $q$. A typical automorphism of $H(m,q)$ is a composition of two automorphisms, each of a special type. An automorphism of the first type corresponds to an $m$-tuple $(h_1,\dots,h_m)$ of permutations of $Q$ and maps a vertex $\alpha$ to $(\alpha_1^{h_1},\ldots,\alpha_m^{h_m})$. An automorphism of the second type corresponds to a permutation of the set $M=\{1,\dots,m\}$ of subscripts and simply permutes the entries of vertices, for example the map corresponding to the permutation $(123)$ of $M$ maps $\alpha= (\alpha_1, \alpha_2, \alpha_3,\alpha_4, \dots,\alpha_m)$ to $(\alpha_3, \alpha_1, \alpha_2,\alpha_4, \dots,\alpha_m)$. (More details are given in Section 2.1.)

Our main result is Theorem~\ref{binaryCTclass}, further below, the proof of which relies on \cite[Theorem~1.2]{minimal2nt}, stated below as Theorem~\ref{binaryx2ntchar}. For now we give a brief discussion of the relation between the two theorems, only what is required to properly state Theorem~\ref{binaryCTclass}. If a completely transitive code $C$ has minimum distance at least $5$ then, since this implies $C_2$ is non-empty, we have that $C$ is by definition $2$-neighbour-transitive. Hence, Theorem~\ref{binaryx2ntchar} applies. Recall that the \emph{socle} $\soc(G)$ of a group $G$ is the product of all its minimal normal subgroups. In particular, Theorem~\ref{binaryx2ntchar} gives all possibilities for the socle of the action on $M$ of the stabiliser $\Aut(C)_{\b 0}$ of the codeword ${\b 0}$ (assumed to be in $C$) for any $2$-neighbour-transitive code $C$ with minimum distance at least $5$. If $|C|>2$ then this socle is either one of the Mathieu groups $\mg_{11}$ or $\mg_{12}$, as in Theorem~\ref{binaryx2ntchar} part 2, or contained in \cite[Table~1]{minimal2nt}, as in Theorem~\ref{binaryx2ntchar} part 3. Table~\ref{binarytable} lists the groups we consider here (see the following remark) for the socle of the action of $\Aut(C)_{\b 0}$ on $M$.

\begin{table}
 \begin{center}
 \begin{tabular}{ccc}
  $G$ & $m$ & conditions\\
  \hline
  $\PSL_3(4)$ & $21$ & - \\
  
  $\alt_7$ & $15$ & - \\
  
  $\PSL_2(r)$ & $r+1$ & $23\leq r\equiv\pm 1\pmod 8$ \\
  
  
  $\PSU_3(r)$ & $r^3+1$ & $r$ is odd \\
  
  $\Ree(r)$ & $r^3+1$ & $r\geq 3$ \\
  
  $\mg_{m}$ & $11,12,22,23,24$ & - \\
  
  $\HS$ & $176$ & - \\
  
  $\Co_3$ & $276$ & - \\
  \hline
 \end{tabular}
 \caption{Groups $G$ for which there exists a non-trivial binary $2$-neighbour-transitive code $C$ in $H(m,2)$ with $G\cong\soc(\Aut(C)_{\b 0}^M)$ and $G$ is not isomorphic to i) $\PSL_t(2^k)$ unless $(t,k)=(3,2)$, ii) $\Sp_{2t}(2)$ for $t\geq 3$, or iii) $\Z_2^t$ for $t\geq 3$. }
 \label{binarytable}
 \end{center}
\end{table}

\begin{remark}
 The groups appearing in \cite[Table~1]{minimal2nt} but not in Table~\ref{binarytable}, and hence not covered in this paper are: $\PSL_t(2^k)$ with $(t,k)\neq(3,2)$; $\Sp_{2t}(2)$ for $t\geq 3$; and $\Z_2^t$ for $t\geq 3$. This is due to the fact that the first two cases require more technical representation theory than those treated in this paper, while the third case requires the classification of finite transitive linear groups to be applied. However, Table~\ref{binarytable} does contain $\PSL_3(4)$. In this case the corresponding codes are related to the geometry of $\pg_2(4)$, which also arises in the treatment of the larger Mathieu groups; see Examples~\ref{pslcodesex} and~\ref{codesMathgps}, and \cite[Sections~6.5--6.7]{dixon1996permutation}. We believe that, given significant effort, the cases not treated in this paper can be resolved. We plan to work on this in the future, but leave this as a separate open question due to the reasons given above. Moreover, we do expect codes to arise in these cases. For a known example, take $C$ to be the extended Nordstrom--Robinson code with parameters $(16,256,6;4)$. In this case $C$ is completely transitive and the socle of $\Aut(C)_{\b 0}^M$ is $\Z_2^4$.
\end{remark}

We can now state Theorem~\ref{binaryCTclass}. Note: we say that a code $C$ is \emph{non-trivial} if $|C|>2$; if $C$ is a linear code of length $m$, dimension $k$, minimum distance $\delta$ and covering radius $\rho$ then we say it has \emph{parameters} $[m,k,\delta;\rho]$; if $C$ is a non-linear code of length $m$, minimum distance $\delta$ and covering radius $\rho$ then we say it has \emph{parameters} $(m,|C|,\delta;\rho)$.

\begin{theorem}\label{binaryCTclass}
 Let $C$ be a non-trivial binary completely transitive code in $H(m,2)$ with minimum distance $\delta\geq 5$ and suppose the socle of the action of $\Aut(C)_{\b 0}$ on $M$ is isomorphic to $G$ as in one of the lines of Table~\ref{binarytable}. Then $C$ is equivalent to one of the codes given in Table~\ref{binaryCTtable}. Conversely, each code in Table~\ref{binaryCTtable} is completely transitive.
\end{theorem}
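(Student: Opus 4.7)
The plan is a row-by-row classification driven by Theorem~\ref{binaryx2ntchar}. Writing $H = \Aut(C)_{\b 0}$ and assuming $\b 0 \in C$, the hypothesis $\delta\geq 5$ forces $C_2\neq\emptyset$, so $C$ is $2$-neighbour-transitive and Theorem~\ref{binaryx2ntchar} applies; hence $H^M$ is $2$-homogeneous with socle $G$ as in one of the lines of Table~\ref{binarytable}. Identifying a binary codeword with its support, I would view $C$ as a collection of subsets of $M$ containing $\emptyset$ and closed under $\Aut(C)$. Any $H^M$-orbit on supports of a fixed size is, by $2$-homogeneity, a $2$-$(m,\delta,\lambda)$ design on $M$; in particular, the supports of minimum-weight codewords partition into such designs.

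For each row of Table~\ref{binarytable}, I would then combine two structural ingredients: (a) the detailed permutation character and low-rank subgroup structure of $G$ in its natural $2$-homogeneous action on $m$ points, which severely constrains the possible $G$-invariant $2$-designs; and (b) the identification of the linear or non-linear code generated by such a design and closed under $\Aut(C)$. This should pin $C$ down to the entries of Table~\ref{binaryCTtable}: for the Mathieu groups one recovers the Golay codes together with their natural shortenings and extensions; for $\PSL_3(4)$ the codes arising from the geometry of $\pg_2(4)$ (as flagged in Examples~\ref{pslcodesex} and~\ref{codesMathgps}); for $\PSL_2(r)$ and $\PSU_3(r)$ codes built from the natural designs preserved by these groups; for $\Ree(r)$ a code carried by a Ree unital; and for $\HS$, $\Co_3$ codes derived from the Higman--Sims and McLaughlin graphs. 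For the two infinite families the argument must proceed uniformly in $r$, so I would isolate the common combinatorial input (the $G$-invariant design structure together with $2$-transitivity) rather than doing a parameter-by-parameter analysis.

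For the converse, I would verify that each candidate is completely transitive by computing its parameters $[m,k,\delta;\rho]$ or $(m,|C|,\delta;\rho)$, exhibiting $\Aut(C)$ explicitly, and checking transitivity on every $C_i$ for $0\leq i\leq \rho$; for the classical entries this is either standard or follows from the associated design or graph, while the new non-linear completely transitive code (and its $2$-neighbour-transitive companion that is not completely transitive) requires a bespoke construction. The main obstacle will be transitivity on the deeper shells $C_i$ for $i\geq 3$ in rows whose covering radius exceeds $2$, where $2$-neighbour-transitivity alone is insufficient and one must either reason combinatorially inside the associated design/graph or invoke a known distance-regularity result; a secondary difficulty is isolating the new non-linear entry within the $\PSL_3(4)$ row from the closely related linear codes coming from $\pg_2(4)$, which will require a direct construction and a careful orbit computation.
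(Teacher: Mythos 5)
There is a genuine gap, and it starts with the expected outcome. Your outline anticipates that the rows $\PSL_2(r)$, $\PSU_3(r)$, $\Ree(r)$, $\HS$ and $\Co_3$ of Table~\ref{binarytable} will contribute codes to Table~\ref{binaryCTtable} (``codes built from the natural designs preserved by these groups\dots a code carried by a Ree unital\dots codes derived from the Higman--Sims and McLaughlin graphs''). In fact Table~\ref{binaryCTtable} contains \emph{no} entries for any of these groups: the bulk of the paper's work for these rows is a \emph{non-existence} proof. The $G$-invariant designs and the codes they span certainly exist as candidates (they are exactly the submodules of the $\F_2$-permutation module), but they are not completely transitive, and your proposal contains no mechanism for ruling them out. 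The paper does this with three tools you would need analogues of: the orbit-counting bound $(m+1)|X|\geq 2^m$ (Lemma~\ref{morbitsbound}), which together with lower bounds on submodule dimensions restricts $r$ to finitely many values for the infinite families; the criterion that a completely regular code must have covering radius equal to its external distance (used for $\G_{22}^\perp$, $r=25$, and several $\PSL_3(4)$ submodules); and divisibility constraints on the number of minimum-weight codewords coming from the fact that they form a $t$-design (used to kill the $\HS$, $\Co_3$, $\PSU_3$ and $\Ree$ candidates). Without these, step (a)+(b) of your plan terminates with a list of surviving candidates rather than the empty set.

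Two further structural points. First, the paper's classification is not organised around $G$-invariant $2$-designs but around the maximal linear subcode $C_{\text{max}}$, which by Proposition~\ref{xmaxlemma} is an $\F_2 X_{\b 0}$-submodule of the permutation module; the known submodule lattices (Ivanov--Praeger and others) then give a short, complete list of candidates, and the index $|C|/|C_{\text{max}}|=|X^M:X_{\b 0}^M|$ is bounded by the outer automorphisms, which is how the union-of-cosets codes (the Nordstrom--Robinson code and $\langle\L,\Delta_1\rangle\cup\langle\L,\Delta_2\rangle$) are located. Classifying invariant $2$-designs directly is a substantially harder and less structured problem. Second, the theorem is stated in terms of $\soc(\Aut(C)_{\b 0}^M)$, and the case where this differs from $\soc(\Aut(C)^M)$ must be isolated separately: this is precisely the $\alt_7<\alt_8$ situation producing the Nordstrom--Robinson code, and your proposal does not account for it. Your treatment of the converse (shell-by-shell transitivity on $C_i\cap\varGamma_i({\b 0})$ via the geometry of the associated designs) is in the right spirit and matches what the paper does.
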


\begin{table}
 \begin{center}
 \begin{tabular}{rccccc}
  line & $C$ & $\Aut(C)$ & parameters & dist.~reg.~graph \\
  \hline  
  1 & $\H$ & $2\mg_{12}$ & $(12,24,6;3)$ & no \\
  
  2 & $\PH$ & $2\rtimes \mg_{11}$ & $(11,24,5;3)$ & no \\
  
  3 & $\NR$ & $2^5\rtimes\alt_8$ & $(15,256,5;3)$ & see Section~\ref{distreggraphsect} \\
  
  4 & $\P^\perp$ & $T_C\rtimes \PGaL_3(4)$ & $[21,12,5;3]$ & \cite[Theorem~11.3.6]{brouwer} \\
  
  5 & $\langle\L,\Delta_1\rangle$ & $T_C\rtimes \PSiL_3(4)$ & $[21,11,5;6]$ & \cite[Section~11.3.H b)]{brouwer} \\
  
  6 & $\langle\L,\Delta_1\rangle\cup\langle\L,\Delta_2\rangle$ & $T_\L\rtimes \PGaL_3(4)$ & $(21,2^{10}\cdot 3,5;6)$ & no \\
  
  7 & $\L$ & $T_C\rtimes \PGaL_3(4)$ & $[21,10,5;6]$ & \cite[Section~11.3.H c)]{brouwer} \\
  
  8 & $\G_{24}$ & $T_C\rtimes \mg_{24}$ & $[24,12,8;4]$ & \cite[Theorem~11.3.2]{brouwer} \\
  
  9 & $\G_{23}$ & $T_C\rtimes \mg_{23}$ & $[23,12,7;3]$ & \cite[Theorem~11.3.4]{brouwer} \\
  
  10 & $\G_{23}^\perp$ & $T_C\rtimes \mg_{23}$ & $[23,11,8;7]$ & \cite[Section~11.3.E]{brouwer} \\
  
  11 & $\G_{22}$ & $T_C\rtimes (\mg_{22}:2)$ & $[22,12,6;3]$ & \cite[Theorem~11.3.5]{brouwer} \\
  
  12 & $\EG_{22}$ & $T_C\rtimes (\mg_{22}:2)$ & $[22,11,6;7]$ & \cite[Section~11.3.F]{brouwer} \\
  
  13 & $\SG_{22}$ & $T_C\rtimes \mg_{22}$ & $[22,11,7;6]$ & \cite[Section~11.3.H a)]{brouwer} \\
  
  \hline
 \end{tabular}
 \caption{Non-trivial binary completely transitive codes $C$ with minimum distance at least $5$ and where $\Aut(C)_{\b 0}^M$ has as socle one of the groups $G$ contained in Table~\ref{binarytable}. See Section~\ref{sect:codes} for the definitions of these codes. The last column gives a reference for a distance-regular graph related to $C$, or contains `no' if no such graph can be constructed in the usual manner; see Section~\ref{distreggraphsect} for more details.}
 \label{binaryCTtable}
 \end{center}
\end{table}

While this paper is concerned with classifying certain subfamilies of binary completely transitive codes with minimum distance at least $5$, many completely transitive codes having minimum distance less then $5$ are known. For instance, \cite{Borges201468} exhibits completely transitive codes with minimum distances $3$ and $4$, whilst \cite{borges2010q} constructs completely transitive codes with minimum distances $3$ and below. For more examples of such codes, see \cite{borges2019completely}. It is worth mentioning that Borges et al.~\cite{borges2001nonexistence} proved that any binary linear completely transitive code has error correction capacity $e=\lfloor (\delta-1)/2\rfloor\leq 3$, and all codes arising in Theorem~\ref{binaryCTclass} also satisfy $e\leq 3$.

As part of our classification we also construct two non-linear codes. Using the notation defined in Example~\ref{pslcodesex}, these are the $2$-neighbour-transitive code $\langle\P,\Delta_1\rangle\cup\langle\P,\Delta_2\rangle$ with parameters $(21,2^9\cdot 3,6;6)$ (see Lemma~\ref{ptimes32nt}) and the completely transitive code $\langle\L,\Delta_1\rangle\cup\langle\L,\Delta_2\rangle$ with parameters $(21,2^{10}\cdot 3,5;6)$ (see Lemma~\ref{nonlinearpsl34}). Note that although not explicitly appearing previously in the literature the code $\langle\L,\Delta_1\rangle\cup\langle\L,\Delta_2\rangle$ is implicitly known to be completely transitive by applying \cite[Example (2) on page 355]{Neumaier1992completely}, which states that if $C$ is completely regular with covering radius $\rho$ then $C_\rho$ is also completely regular with the same distance partition (in reverse order). In particular, if $C$ is in fact completely transitive then, since the distance partition of $C$ (and thus also of $C_\rho$) consists of $\Aut(C)$-orbits, $C_\rho$ is also completely transitive. In this instance, the code $C=\L+\Delta_3$, {\em i.e.} the translate of $\L$ by $\Delta_3$, is known to be completely transitive and has covering radius $6$ with $C_6=\langle\L,\Delta_1\rangle\cup\langle\L,\Delta_2\rangle$. Given this, all of the codes appearing in Table~\ref{binaryCTtable} are known to be completely regular and completely transitive (see \cite[Pages 22--24]{borges2019completely} and the references therein). We remark though that our proofs of the complete transitivity of $\langle\L,\Delta_1\rangle$, $\L$, $\G_{23}^\perp$, $\E_{22}$ and $\S_{22}$ do appear to be new (see Section~\ref{sect:codes} and Lemmas~\ref{halfg23ct}, \ref{math22CT} and \ref{psl34CT}).

The next definition associates with any code a certain linear subcode. Note that the condition that $T_{C_{\text{max}}}\leq \Aut(C)$ is included to ensure $C_{\text{max}}$ is uniquely defined. Also, $T_{C_{\text{max}}}$ is sometimes called the \emph{kernel} of the code, though we do not use that terminology here.

\begin{definition}\label{maximallineardef}
 Let $Q=\F_2$ and $C$ be a code in $H(m,2)$. The \emph{maximal linear subcode} of $C$, denoted $C_{\text{max}}$, is the largest linear subcode of $C$ with the additional property that $T_{C_{\text{max}}}\leq \Aut(C)$, where $T_{C_{\text{max}}}$ is the group of translations by elements of $C_{\text{max}}$. 
\end{definition}

Below we restate \cite[Theorem~1.2]{minimal2nt}, which is the starting point for the proof of Theorem~\ref{binaryCTclass}. Note that Theorem~\ref{binaryx2ntchar} is presented in terms of $C_{\text{max}}$, as in Definition~\ref{maximallineardef}, and so differs slightly from the original statement in \cite{minimal2nt}. Including the conditions on $C_{\text{max}}$ here is intended to increase the transparency of our proof strategy. Indeed, the proof of \cite[Theorem~1.2]{minimal2nt} is effectively divided up according to whether $C_{\text{max}}$ has dimension $0$, $1$, or at least $2$, though the notation $C_{\text{max}}$ is not used there. Thus, we hope the reader finds it fairly straightforward to verify that the formulation below is indeed equivalent. Note also that $C$ non-trivial implies that $|C|>2$, and hence the binary repetition code does not appear as a possibility for $C$. Recall that the groups from \cite[Table~1]{minimal2nt} that are relevant to Theorem~\ref{binaryCTclass} also appear here in Table~\ref{binarytable}.

\begin{theorem}\label{binaryx2ntchar}
 Let $C$ be a non-trivial binary code in $H(m,2)$ with minimum distance at least $5$ containing the codeword ${\b 0}$. Then $C$ is $2$-neighbour-transitive if and only if one of the following holds:
 \begin{enumerate}
  \item $C_{\text{max}}=\{{\b 0}\}$ and $C$ is the even weight subcode $\E$ of the punctured Hadamard code with $m=11$ and minimum distance $6$;
  \item $C_{\text{max}}$ is the binary repetition code and $C$ is one of the following codes:
  \begin{enumerate}
   \item the Hadamard code $\H$ with $m=12$ and minimum distance $6$,
   \item the punctured Hadamard code $\PH$ with $m=11$ and minimum distance $5$; or,
  \end{enumerate}
  \item $C_{\text{max}}$ has dimension at least $2$ and minimum distance $\delta_{\text{max}}\geq 5$, and there exists a subgroup $X_{\b 0}\leq \Aut(C_{\text{max}})_{\b 0}$, where $X_{\b 0}$ and $m$ are as in \cite[Table~1]{minimal2nt}, such that
  \begin{enumerate}
   \item $C_{\text{max}}$ is $(X,2)$-neighbour-transitive, where $X=T_{C_{\text{max}}}\rtimes X_{\b 0}\leq \Aut(C)$ with $T_{C_{\text{max}}}$ denoting the group of translations by elements of $C_{\text{max}}$, and,
   \item $C$ is the union of a set $\S$ of cosets of $C_{\text{max}}$, and $\Aut(C)$ acts transitively on $\S$.
  \end{enumerate}
 \end{enumerate}
\end{theorem}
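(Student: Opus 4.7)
The plan is to prove the equivalence in both directions, with the forward implication (that a $2$-neighbour-transitive code of minimum distance $\geq 5$ must appear in the list) carrying the substantial content. For the converse, one verifies each of the listed codes directly: for $\E$, $\H$, $\PH$ one computes the stabiliser of $\b 0$ (Mathieu-type groups) and checks transitivity on $C_1$ and $C_2$ from the well-known combinatorial descriptions of these codes; for case (3), the hypotheses (3a)--(3b) combine to give transitivity on each $C_i$ for $i\leq 2$ by first moving any vertex into a coset of $C_{\text{max}}$ using $\Aut(C)$-transitivity on $\S$, then using $(X,2)$-neighbour-transitivity of $C_{\text{max}}$ inside that coset.

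For the forward direction, assume $C$ is $2$-neighbour-transitive with $\delta\geq 5$. Since $\delta\geq 5$, each vertex of $C_2$ has a \emph{unique} nearest codeword, and $C_2$ may be identified with pairs consisting of a codeword and an unordered pair $\{i,j\}\subseteq M$; combined with transitivity on $C$ and $C_2$, this forces $\Aut(C)_{\b 0}^M$ to be $2$-homogeneous on $M$. I would then invoke the classification of finite $2$-transitive groups (via CFSG) to restrict $\soc(\Aut(C)_{\b 0}^M)$ to the well-known list, which is exactly what Table~1 of \cite{minimal2nt} records.

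The bulk of the work is the case analysis by $\dim C_{\text{max}}$. For $\dim C_{\text{max}}=0$, one argues that $C$ contains no non-zero linear subcode preserved translation-wise by $\Aut(C)$; together with $\b 0\in C$, this is a very restrictive condition, and one shows using the $2$-homogeneous list that the only surviving configuration is $\E$ with socle $\mg_{11}$ on $11$ points. For $\dim C_{\text{max}}=1$, $C_{\text{max}}$ is the repetition code, equivalent to $\mathbf 1\in C$ with $T_{\langle\mathbf 1\rangle}\leq\Aut(C)$; the same CFSG-filter leaves only $\H$ (with $\mg_{12}$) and $\PH$ (with $\mg_{11}$). For $\dim C_{\text{max}}\geq 2$, one shows that $C_{\text{max}}$ inherits $(X,2)$-neighbour-transitivity for $X=T_{C_{\text{max}}}\rtimes X_{\b 0}$: transitivity on $(C_{\text{max}})_1$ and $(C_{\text{max}})_2$ follows from the $2$-homogeneous action on $M$ via translations, and the admissible pairs $(X_{\b 0},m)$ are precisely those in Table~1 of \cite{minimal2nt}; meanwhile, writing $C=\bigcup_{s\in\S}(s+C_{\text{max}})$, transitivity of $\Aut(C)$ on $C$ plus invariance of $C_{\text{max}}$ forces transitivity on $\S$.

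The main obstacle is the passage between $C$ and $C_{\text{max}}$ in case (3): one must prove that the maximal linear subcode is well defined (this uses $q=2$, so cosets are translates), that $2$-neighbour-transitivity of $C$ propagates to $C_{\text{max}}$, and that conversely any admissible pair $(C_{\text{max}},\S)$ gives back a $2$-neighbour-transitive $C$. A secondary difficulty is the low-dimensional cases, where one must rule out all $2$-homogeneous socles other than $\mg_{11}$ and $\mg_{12}$ by checking that no linear subcode invariant under translations can be avoided; this uses the tight interaction between $\delta\geq 5$ (so small-weight codewords are absent) and the orbit structure of the candidate groups on $\F_2^m$.
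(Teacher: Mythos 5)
You should know at the outset that the paper does not prove Theorem~\ref{binaryx2ntchar} at all: it is a restatement of \cite[Theorem~1.2]{minimal2nt}, reformulated in terms of $C_{\text{max}}$, and the only proof obligation the authors take on here is the (asserted, not detailed) equivalence of the two formulations. So your proposal is not an alternative to an argument appearing in this paper; it is an attempt to reprove the main theorem of the cited reference.

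As such an attempt, it does capture the correct architecture --- $2$-homogeneity of $\Aut(C)_{\b 0}^M$ via the argument of Proposition~\ref{ihom}, reduction to the classification of $2$-homogeneous groups, and a case division by $\dim(C_{\text{max}})\in\{0,1,\geq 2\}$, which is exactly how the cited proof is organised --- but it has a genuine gap: the classification content is asserted rather than proved. Concretely: (i) Table~1 of \cite{minimal2nt} is not simply the CFSG list of $2$-transitive socles; it is the outcome of a module-theoretic analysis determining, for each candidate group, which $\F_2 X_{\b 0}$-submodules of $\F_2^m$ can arise as a minimal $(X,2)$-neighbour-transitive code with minimum distance at least $5$. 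Writing ``the admissible pairs $(X_{\b 0},m)$ are precisely those in Table~1'' therefore uses the conclusion as an input. (ii) The cases $\dim(C_{\text{max}})=0$ and $1$ are each the subject of a separate paper (\cite{ef2nt} and \cite{ondimblock}, respectively); eliminating every $2$-homogeneous socle other than $\mg_{11}$ and $\mg_{12}$ there requires detailed orbit and design computations, not just ``the tight interaction between $\delta\geq 5$ and the orbit structure of the candidate groups.'' Your converse direction for case 3 is essentially sound (map the coset containing the unique nearest codeword onto $C_{\text{max}}$ using transitivity on $\S$, then apply $(X,2)$-neighbour-transitivity of $C_{\text{max}}$, with $\delta\geq 5$ guaranteeing uniqueness of nearest codewords), and your derivation of (3a)--(3b) from $2$-homogeneity and the normality of $T_{C_{\text{max}}}$ in $\Aut(C)$ is the right idea; but without the group-by-group analysis the clause ``where $X_{\b 0}$ and $m$ are as in \cite[Table~1]{minimal2nt}'' is unsupported, and that is where essentially all of the difficulty lies.
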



Hypothesis~\ref{hyp1}, presented below, will be assumed to hold throughout the majority of the paper. Note that the condition on the dimension of $C_{\text{max}}$ simply ensures that $C_{\text{max}}$ is neither the repetition code nor its dual, which is the case we are interested in here. 

\begin{hypothesis}\label{hyp1}
 Let $C$ be a completely transitive code in $H(m,2)$ with minimum distance $\delta\geq 5$ and let $X=\Aut(C)$. Let $C_{\text{max}}$ be the maximal linear subcode of $C$, let $\delta_{\text{max}}$ be the minimum distance of $C_{\text{max}}$, and let $X_{\text{max}}$ be the setwise stabiliser in $X$ of $C_{\text{max}}$. Furthermore, assume that $2\leq \dim(C_{\text{max}})\leq m-2$.
\end{hypothesis}


Note that Hypothesis~\ref{hyp2}, an extension of Hypothesis~\ref{hyp1}, includes an additional assumption relating to the socles of the actions of $X$ and $X_{\text{max}}$ on $M$. We briefly outline the significance of this hypothesis. In Section~\ref{sect:prelim} we show that $X^M\cong X/K$ and $X_{\text{max}}^M\cong X_{\text{max}}/K$, where $K=T_{C_{\text{max}}}$, and that both of these actions, of $X$ on $M$ and of $X_{\text{max}}$ on $M$, are $2$-homogeneous. Since $2$-homogeneous groups are classified according to their socles, and we deal with the case that these two actions have different socles in Section~\ref{sect:a7}, Hypothesis~\ref{hyp2} will be assumed in Sections~\ref{sect:mathgroups} to \ref{sect:Reer}.


\begin{hypothesis}\label{hyp2}
 Assume Hypothesis~\ref{hyp1} holds, that the kernel $K$ of the action of $X$ on $M$ is $T_{C_{\text{max}}}$ and that
 \[
  \soc(X/K)=\soc(X_{\text{max}}/K).
 \]
\end{hypothesis}

In the next section, we introduce the notation used throughout the paper and various preliminary results. In particular, in Section~\ref{sect:comptrans} we prove some new results regarding completely transitive codes useful in later sections. Section~\ref{sect:codes} contains the definitions of each of the codes appearing in Table~\ref{binaryCTtable} or otherwise, and Section~\ref{distreggraphsect} answers the question of the existence of a distance-regular graph related to each code. Section~\ref{sect:a7} deals with the case where Hypothesis~\ref{hyp1} holds but Hypothesis~\ref{hyp2} does not, which turns out to be precisely the case $\alt_7$ from Table~\ref{binarytable}. Thus the stronger Hypothesis~\ref{hyp2} may be assumed in the subsequent Sections~\ref{sect:mathgroups} to \ref{sect:Reer}, each of which deals with some collection of groups from Table~\ref{binarytable}. Finally, Section~\ref{sect:mainproof} proves Theorem~\ref{binaryCTclass}.

\section{Preliminaries}\label{sect:prelim}

Let the \emph{set of coordinate entries} $M$ and the \emph{alphabet} $Q$ be sets of sizes $m$ and $q$, respectively, where both $m$ and $q$ are integers and at least $2$. The vertex set $V\varGamma$ of the Hamming graph $\varGamma=H(m,q)$ consists of all strings in the alphabet $Q$ with positions indexed by the set $M$, which are usually written as $m$-tuples. Let $Q_i\cong Q$ be the copy of the alphabet in the entry $i\in M$ so that the vertex set of $H(m,q)$ is identified with the Cartesian product 
\[
 V\varGamma=\prod_{i\in M}Q_i.
\] 
An edge in $\varGamma$ exists between two vertices if and only if they differ as $m$-tuples in exactly one entry. If $\alpha$ is a vertex of $H(m,q)$ and $i\in M$ then $\alpha_i$ refers to the value of $\alpha$ in the $i$-th entry, that is, $\alpha_i\in Q_i$, so that $\alpha=(\alpha_1,\ldots,\alpha_m)$ when $M=\{1,\ldots,m\}$.  For more in depth background material on coding theory see \cite{cameron1991designs} or \cite{macwilliams1978theory}.

When considering binary codes we will be flexible with our interpretation of the vertices of $H(m,2)$. When $q=2$ we can identify $Q$ with $\F_2$ and $V\varGamma$ with the vector space $\F_2^m$. Let $\{e_i\mid i\in M\}$ be the standard basis for $V\varGamma\cong\F_2^m$. Then a vertex $\alpha$ of $H(m,2)$ will be simultaneously considered to be a vector $\alpha=\sum_{i\in M}\alpha_ie_i$, a function $\alpha:M\ra\F_2$ such that $i\mapsto \alpha_i$, and a subset $\{i\in M\mid \alpha_i=1\}$. Note that the Hamming metric ensures that there is only one sensible way to do this. As such, if $\alpha,\beta$ are vertices of $H(m,2)$, then so are the sum $\alpha+\beta$ (where the vertices are taken to be either vectors or functions), the product $\alpha\beta$ (where the vertices are taken to be functions), and the intersection $\alpha\cap\beta$, union $\alpha\cup\beta$ and set difference $M\setminus\alpha$ (where vertices are considered to be sets).

Let $\alpha,\beta$ be vertices and $C$ be a code in a Hamming graph $H(m,q)$ with $0\in Q$ a distinguished element of the alphabet. A summary of important notation regarding codes in Hamming graphs is contained in Table~\ref{hammingnotation}.
\begin{table}
 \begin{center}
 \begin{tabular}{cp{7 cm}}
  Notation & Explanation\\
  \hline
  $\b 0$ & vertex with $0$ in each entry\\
  
  $\wt(\alpha)=|\supp(\alpha)|$ & weight of $\alpha$\\
  
  $d(\alpha,\beta)=|\diff(\alpha,\beta)|$ & Hamming distance\\
  
  $\varGamma_s(\alpha)=\{\beta\in V\varGamma \mid d(\alpha,\beta)=s\}$ & set of $s$-neighbours of $\alpha$\\
  
  $\delta=\min\{d(\alpha,\beta)\mid \alpha,\beta\in C,\alpha\neq\beta\}$ & minimum distance of $C$\\
  
  $d(\alpha,C)=\min\{d(\alpha,\beta) \mid \beta\in C\}$ & distance from $\alpha$ to $C$\\
  
  $\rho =\max\{d(\alpha,C)\mid\alpha\in V\varGamma\}$ & covering radius of $C$\\
  
  $C_s=\{\alpha\in V\varGamma \mid d(\alpha,C)=s\}$ & set of $s$-neighbours of $C$\\
  
  $\{C=C_0,C_1,\ldots, C_\rho\}$ & distance partition of $C$\\
    
  \hline
 \end{tabular}
 \caption{Hamming graph notation.}
 \label{hammingnotation}
 \end{center}
\end{table}

Note that if the minimum distance $\delta$ of a code $C$ satisfies $\delta\geq 2s$, then the set of $s$-neighbours $C_s$ satisfies $C_s=\cup_{\alpha\in C}\varGamma_s(\alpha)$ and if $\delta\geq 2s+1$ this is a disjoint union. This fact is crucial in many of the proofs below; it is often assumed that $\delta\geq 5$, in which case every element of $C_2$ is distance $2$ from a unique codeword.

A \emph{linear} code is a code $C$ in $H(m,q)$ with alphabet $Q=\F_q$ a finite field, so that the vertices of $H(m,q)$ form a vector space $V$, such that $C$ is an $\F_q$-subspace of $V$. Given $\alpha,\beta\in V$, the usual inner product is given by $\langle\alpha,\beta\rangle=\sum_{i\in M}\alpha_i\beta_i$. The \emph{dual} code of $C$ is $C^\perp=\{\beta\in V\mid \forall \alpha\in C,\langle\alpha,\beta\rangle=0\}$. Note also that we write the binary repetition code, that is, the subspace of $\F_2^m$ generated by the vector ${\b 1}=(1,\ldots,1)$, as $\langle {\b 1}\rangle$ and its dual as $\langle {\b 1}\rangle^\perp$.

Let $C$ be a code in $H(m,q)$. The \emph{punctured code}, also sometimes called the \emph{truncated code}, of $C$ at the coordinate $i\in M$ is the code in $H(m-1,q)$ obtained by deleting the entry $\alpha_i$ from each codeword $\alpha\in C$. The \emph{shortened code} of $C$ at $i\in M$ is the code in $H(m-1,q)$ obtained by choosing only those codewords $\alpha\in C$ with $\alpha_i=0$ and deleting the entry $\alpha_i$. If the set of punctured (respectively shortened) codes obtained as $i$ ranges over $M$ are pairwise equivalent then we simply say the punctured (respectively the shortened) code of $C$; in particular, this occurs if $\Aut(C)_{{\b 0}}$ acts transitively on $M$ (for notation regarding automorphisms see Section~\ref{hamminggraphautoprelim}). The \emph{even-weight subcode} of $C$ is the code consisting of precisely those codewords in $C$ having even weight.

The \emph{Singleton bound} (see \cite[4.3.2]{delsarte1973algebraic}) is a well known bound for the size of a code $C$ in $H(m,q)$ with minimum distance $\delta$, stating that $|C|\leq q^{m-\delta+1}$. For a linear code $C$ this may be stated as $\delta^\perp-1\leq k\leq m-\delta+1$, where $k$ is the dimension of $C$, $\delta$ is the minimum distance of $C$ and $\delta^\perp$ is the minimum distance of $C^\perp$.

Let $C$ be a code and $s$ be the external distance of $C$, that is, $s+1$ is the number of non-zero terms in the \emph{dual distance distribution} of $C$. It follows from \cite[Theorem~4.1]{sole1990completely} that if $C$ is completely regular then $s$ is equal to the covering radius $\rho$ of $C$. Indeed, it is shown in \cite{Bassalygo1977note} that if the parameters of a code $C$ satisfy $\rho=s=e+1$ then $C$ is completely regular.


\subsection{Automorphisms of a Hamming graph}\label{hamminggraphautoprelim}

The automorphism group $\Aut(\varGamma)$ of the Hamming graph is the semi-direct product $B\rtimes L$, where $B\cong (\Sym(Q))^m$ and $L\cong \Sym(M)$ (see \cite[Theorem 9.2.1]{brouwer}). Note that $B$ and $L$ are called the \emph{base group} and the \emph{top group}, respectively, of $\Aut(\varGamma)$. Since we identify $Q_i$ with $Q$, we also identify $\Sym(Q_i)$ with $\Sym(Q)$. If $h\in B$ and $i\in M$ then $h_i\in \Sym(Q_i)$ is the image of the action of $h$ in the entry $i\in M$. Let $h\in B$, $\sigma\in L$ and $\alpha\in V\varGamma$. Then $h$ and $\sigma$ act on $\alpha$ explicitly via: 
\begin{equation*}
\alpha^h =(\alpha_1^{h_1},\ldots,\alpha_m^{h_m})\quad\text{and}\quad
\alpha^\sigma=(\alpha_{1^{\sigma^{-1}}},\ldots,\alpha_{m^{\sigma^{-1}}}).
\end{equation*}
The automorphism group of a code $C$ in $\varGamma=H(m,q)$ is $\Aut(C)=\Aut(\varGamma)_C$, the setwise stabiliser of $C$ in $\Aut(\varGamma)$. 

Let $G$ be a group acting on a set $\Omega$, $\omega$ be an element of $\Omega$, and $S$ be a subset of $\Omega$. Then we use the following notation:
\begin{enumerate}
 \item $G_\omega$ denotes the subgroup of $G$ stabilising $\omega$,
 \item $G_S$ denotes the setwise stabiliser of $S$ in $G$,
 \item $G_{(S)}$ denotes the point-wise stabiliser of $S$ in $G$, and,
 \item if $G$ fixes $S$ setwise then $G^S$ denotes the subgroup of $\Sym(S)$ induced by $G$.
\end{enumerate}
(For more background and notation on permutation groups see, for instance, \cite{dixon1996permutation}.) In particular, let $X\leq\Aut(\varGamma)$. Then:
\begin{enumerate}
 \item For $x\in X$, recall that $x=h\sigma$ where $h\in B$ and $\sigma \in L$. Then $x^M=\sigma$ denotes the permutation of $M$ induced by $x$, and we write $X^M=\{x^M\mid x\in X\}$; we call $X^M$ the \emph{action of $X$ on entries}. Note that a pre-image $x$ of an element $x^M$ of $X^M$ need not fix any vertex of $H(m,q)$.
 \item $K=X\cap B$ is the kernel $X_{(M)}$ of the action of $X$ on entries and is precisely the subgroup of $X$ fixing $M$ point-wise.
 \item If $i\in M$, then $X_i$ denotes the subgroup of $X$ stabilising the entry $i$ and any $x\in X_i$ is of the form $h\sigma$ ($h\in B$ and $\sigma \in L$) where $\sigma$ fixes $i\in M$. So $x=h\sigma$ induces the permutation $h_i\in\Sym(Q_i)$ on the alphabet $Q_i$. This defines a homomorphism from $X_i$ to $\Sym(Q_i)$ and we denote the image of this homomorphism by $X_i^{Q_i}$. We refer to $X_i^{Q_i}$ as the \emph{action on the alphabet}.
\end{enumerate}


It is worth mentioning that coding theorists often consider more restricted groups of automorphisms, such as the group $\PermAut(C)=\{\sigma\mid h\sigma\in\Aut(C), h=1\in B, \sigma\in L\}$. The elements of this group are called \emph{pure permutations} on the entries of the code. 

Two codes $C$ and $C'$ in $H(m,q)$ are said to be \textit{equivalent} if there exists some $x\in \Aut(\varGamma)$ such that $C^x=\{\alpha^x\mid\alpha\in C\}=C'$. Equivalence preserves many of the important properties in coding theory, such as minimum distance and covering radius, since $\Aut(\varGamma)$ preserves distances in $H(m,q)$. If $q=2$ then, since any element of $B\cong \Z_2^m$ can be seen to be a translation by a vertex, we have that if $C$ and $C'$ are equivalent codes in $H(m,2)$ with ${\b 0}\in C\cap C'$, then $C_{\text{max}}$ and $C'_{\text{max}}$ are also equivalent. This fact is important in the context of this paper and will be used without reference. However, it should be noted that whilst this is true also for $q=3$, it does not hold in general.

\subsection{\texorpdfstring{$s$}{s}-Neighbour-transitive codes}\label{sntr}

This section presents preliminary results regarding $(X,s)$-neighbour-transitive codes, defined in Definition~\ref{sneighbourtransdef}. The next results give certain $2$-homogeneous and $2$-transitive actions associated with an $(X,2)$-neighbour-transitive code.

\begin{proposition}\cite[Proposition~2.5]{ef2nt}\label{ihom} 
 Let $C$ be an $(X,s)$-neighbour-transitive code in $H(m,q)$ with minimum distance $\delta$, where $\delta\geq 3$ and $s\geq 1$. Then for $\alpha\in C$ and $i\leq\min\{s,\lfloor\frac{\delta-1}{2}\rfloor\}$, the stabiliser $X_\alpha$ fixes setwise and acts transitively on $\varGamma_i(\alpha)$.  In particular, the action of $X_\alpha$ on $M$ is $i$-homogeneous.    
\end{proposition}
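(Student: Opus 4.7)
The plan is to exploit the large minimum distance to decouple the $i$-neighbourhoods of distinct codewords, and then read off $i$-homogeneity on $M$ from transitivity on $\varGamma_i(\alpha)$. Since $i\leq\lfloor(\delta-1)/2\rfloor$, we have $\delta\geq 2i+1$, so any vertex at distance $i$ from a codeword $\beta$ is at distance at least $i+1$ from every other codeword. Hence
\[
  C_i \;=\; \bigsqcup_{\beta\in C} \varGamma_i(\beta),
\]
and every element of $C_i$ has a uniquely determined nearest codeword.

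Next, I would verify that $X_\alpha$ fixes $\varGamma_i(\alpha)$ setwise. This is immediate from the general fact that each $x\in X$ permutes the codewords and preserves Hamming distance, so it permutes the sets $\varGamma_i(\beta)$ as $\beta$ ranges over $C$; if $x$ fixes $\alpha$ then it fixes $\varGamma_i(\alpha)$. For transitivity, take $\beta_1,\beta_2\in\varGamma_i(\alpha)$; since $i\leq s$ the $(X,s)$-neighbour-transitivity hypothesis supplies some $x\in X$ with $\beta_1^x=\beta_2$. By the uniqueness of nearest codewords established above, $x$ sends the unique nearest codeword of $\beta_1$, namely $\alpha$, to the unique nearest codeword of $\beta_2$, which is again $\alpha$. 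Hence $\alpha^x=\alpha$ and $x\in X_\alpha$, giving transitivity of $X_\alpha$ on $\varGamma_i(\alpha)$.

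For the $i$-homogeneity claim, I would introduce the map
\[
  \phi\colon \varGamma_i(\alpha)\longrightarrow \binom{M}{i},\qquad \beta\mapsto \diff(\beta,\alpha),
\]
which is well-defined (since $|\diff(\beta,\alpha)|=i$) and surjective onto the collection of $i$-subsets of $M$. Given two $i$-subsets $S_1,S_2\subseteq M$, choose preimages $\beta_1,\beta_2$ under $\phi$ and apply the transitivity just proved to obtain $x\in X_\alpha$ with $\beta_1^x=\beta_2$. Because $x$ fixes $\alpha$, the permutation $x^M$ induced on $M$ satisfies
\[
  S_1^{x^M} \;=\; \diff(\beta_1,\alpha)^{x^M} \;=\; \diff(\beta_1^x,\alpha^x) \;=\; \diff(\beta_2,\alpha) \;=\; S_2,
\]
so $X_\alpha^M$ is $i$-homogeneous. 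No step is genuinely difficult; the only substantive ingredient is the disjointness of the sets $\varGamma_i(\beta)$, which the bound $i\leq\lfloor(\delta-1)/2\rfloor$ is tailored precisely to supply.
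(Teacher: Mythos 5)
Your argument is correct and is essentially the standard proof of this result: the paper itself does not reprove Proposition~\ref{ihom} but imports it from \cite{ef2nt}, and the reasoning there is the same as yours --- the bound $i\leq\lfloor(\delta-1)/2\rfloor$ forces the sets $\varGamma_i(\beta)$, $\beta\in C$, to be pairwise disjoint, so transitivity of $X$ on $C_i$ descends to transitivity of $X_\alpha$ on $\varGamma_i(\alpha)$ via uniqueness of nearest codewords, and $i$-homogeneity follows by pushing forward $\diff(\beta,\alpha)$ under $x^M$. No gaps; the identity $\diff(\beta^x,\alpha^x)=\diff(\beta,\alpha)^{x^M}$ and the surjectivity of $\beta\mapsto\diff(\beta,\alpha)$ onto the $i$-subsets of $M$, which you use implicitly and explicitly respectively, both hold for the automorphisms of $H(m,q)$ as described in Section~\ref{hamminggraphautoprelim}.
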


%

\begin{proposition}\cite[Proposition~2.7]{ef2nt}\label{x12trans} 
 Let $C$ be an $(X,1)$-neighbour-transitive code in $H(m,q)$ with minimum distance $\delta\geq 3$ and $|C|>1$.  Then $X_i^{Q_i}$ acts $2$-transitively on $Q_i$ for all $i\in M$.
\end{proposition}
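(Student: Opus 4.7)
The plan is, for a fixed codeword $\alpha \in C$ and position $i \in M$, to establish both that $X_i^{Q_i}$ acts transitively on $Q_i$ and that the stabiliser of $\alpha_i$ in $X_i^{Q_i}$ is transitive on $Q_i \setminus \{\alpha_i\}$; together these yield $2$-transitivity of $X_i^{Q_i}$ on $Q_i$.

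First I would apply Proposition~\ref{ihom} with $s = 1$ (valid since $\delta \geq 3$) to conclude that $X_\alpha$ acts transitively on $\varGamma_1(\alpha)$ and $1$-homogeneously, hence transitively, on $M$. The set $\varGamma_1(\alpha)$ decomposes, according to the unique coordinate in which a neighbour differs from $\alpha$, into blocks $B_j = \{\beta \in V\varGamma : \beta_k = \alpha_k \text{ for } k \neq j,\, \beta_j \neq \alpha_j\}$ for $j \in M$, and the induced $X_\alpha$-action on these blocks coincides with its action on $M$. Thus $(X_\alpha)_i$ is the setwise stabiliser of $B_i$ and, by transitivity on $\varGamma_1(\alpha)$, acts transitively on $B_i$. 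Since the map $\beta \mapsto \beta_i$ identifies $B_i$ with $Q_i \setminus \{\alpha_i\}$, and $(X_\alpha)_i^{Q_i}$ fixes $\alpha_i$, it follows that $(X_\alpha)_i^{Q_i}$ is transitive on $Q_i \setminus \{\alpha_i\}$; in particular the stabiliser of $\alpha_i$ in $X_i^{Q_i}$ is transitive on $Q_i \setminus \{\alpha_i\}$.

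Next, for transitivity of $X_i^{Q_i}$ on $Q_i$: since $|C| > 1$, there is $\gamma \in C$ with $\gamma \neq \alpha$, hence $\gamma_j \neq \alpha_j$ for some $j \in M$. Using the transitivity of $X_\alpha$ on $M$, choose $z \in X_\alpha$ with $j^z = i$ and replace $\gamma$ by $\gamma^z \in C$, so that (after relabelling) $\gamma_i \neq \alpha_i$. Combining the transitivity of $X$ on $C$ with the transitivity of each $X_\beta$ on $M$ (for $\beta \in C$) shows that $X$ is transitive on $C \times M$, so there exists $y \in X$ with $(\alpha, i)^y = (\gamma, i)$. In particular $y \in X_i$ and $y^{Q_i}$ sends $\alpha_i$ to $\gamma_i \neq \alpha_i$. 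Together with the stabiliser transitivity already established, this gives transitivity of $X_i^{Q_i}$ on $Q_i$, and hence $2$-transitivity by the standard criterion.

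The main technical point to handle with care is the bookkeeping of how a factored automorphism $h\sigma \in B \rtimes L$ acts on the alphabet $Q_i$: for $y \in X_i$ the $L$-component $\sigma$ fixes $i$, so $y^{Q_i}$ equals the single factor $h_i \in \Sym(Q_i)$ of the $B$-component; similarly, the identification of the block action of $X_\alpha$ on the partition $\{B_j : j \in M\}$ with its action on $M$ requires the observation that an automorphism sending $B_j$ setwise to $B_k$ must send $j$ to $k$ as entries of $M$.
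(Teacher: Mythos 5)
The paper does not reprove this statement (it is imported from \cite{ef2nt}), but your argument is correct and is essentially the standard proof: Proposition~\ref{ihom} with $s=1$ gives transitivity of $(X_\alpha)_i^{Q_i}$ on $Q_i\setminus\{\alpha_i\}$ via the block decomposition of $\varGamma_1(\alpha)$ indexed by $M$, and the hypothesis $|C|>1$ (together with transitivity of $X$ on $C$ and of $X_\beta$ on $M$) supplies an element of $X_i^{Q_i}$ moving $\alpha_i$, which combine to give $2$-transitivity. The bookkeeping points you flag — that for $x=h\sigma\in X_i$ the induced alphabet permutation is $h_i$, and that an element mapping $B_j$ to $B_k$ must map $j$ to $k$ — are exactly the right things to check, and you handle them correctly.
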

%
%
%

The concept of a block design, introduced below, comes up frequently in coding theory. Recall that we identify vertices in $H(m,2)$ with subsets of $M$ so that a design as defined below can be identified with a code in $H(m,2)$.

\begin{definition}\label{desdef}
 A \emph{$t$-$(m,k,\lambda)$ design} is a collection $\mathcal{D}$ of subsets of a set $M$ of size $m$ such that every $t$-subset of $M$ is contained in exactly $\lambda$ elements of $\mathcal{D}$. The elements of $\D$ are called \emph{blocks}.
\end{definition}

The following equations can be found, for instance, in \cite{stinson2004combinatorial}. Let $\D$ be a $t$-$(m,k,\lambda)$ design with $|\D|=b$ blocks and let $r$ be the number of blocks containing any given point. Then $mr=bk$, $r(k-1)=\lambda(m-1)$ and 
\begin{equation}
 b=\frac{m(m-1)\cdots(m-s+1)}{k(k-1)\cdots(k-s+1)}\lambda.\label{designparams}
\end{equation}

The definition below is required in order to state the remaining two results of this section. Recall that any $s$-neighbour-transitive code is an $s$-regular code and any completely transitive code is completely regular.

\begin{definition}\label{regcodedefinition}
 Let $C$ be a code in $H(m,q)$ with covering radius $\rho$, and $s$ be an integer with $0\leq s\leq \rho$. Then,
 \begin{enumerate}
  \item $C$ is \emph{$s$-regular} if, for each $i\in\{0,1,\ldots,s\}$, each $k\in\{0,1,\ldots,m\}$, and every vertex $\nu\in C_i$, the number $|\varGamma_k(\nu)\cap C|$ depends only on $i$ and $k$, and,
  \item $C$ is \emph{completely regular} if $C$ is $\rho$-regular.
 \end{enumerate}
\end{definition}

\begin{lemma}\cite[Lemma~2.16]{ef2nt}\label{design}
 Let $C$ be an $(X,s)$-neighbour transitive code in $H(m,q)$. Then $C$ is $s$-regular. Moreover, if $C$ has minimum distance $\delta\geq 2s$ and contains $\b 0$, then for each $k\leq m$ the set of codewords of weight $k$ forms a $q$-ary $s$-$(m,k,\lambda)$ design, for some $\lambda$.
\end{lemma}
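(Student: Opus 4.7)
The plan is to split the proof into two parts, matching the two assertions of the lemma.

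For the first assertion, that $C$ is $s$-regular, I would argue that any $x \in X$ preserves Hamming distance (since $X \leq \Aut(\varGamma)$) and stabilises $C$ setwise, so $x$ restricts to a bijection from $\varGamma_k(\nu) \cap C$ onto $\varGamma_k(\nu^x) \cap C$ for any vertex $\nu$ and any $k$. Since by hypothesis $X$ acts transitively on $C_i$ for each $i \leq s$, the quantity $|\varGamma_k(\nu) \cap C|$ depends only on the index $i$ with $\nu \in C_i$ and on $k$, which is precisely the definition of $s$-regularity.

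For the second assertion, assume $\b 0 \in C$ and $\delta \geq 2s$. I would first apply Proposition~\ref{ihom} with $\alpha = \b 0$ to conclude that $X_{\b 0}$ acts $s$-homogeneously on $M$; this is immediate when $\delta \geq 2s+1$, since then $s \leq \lfloor(\delta-1)/2\rfloor$. Given $s$-homogeneity on $M$, for any two $s$-subsets $T_1, T_2 \subseteq M$ there exists $g \in X_{\b 0}$ with $T_1^g = T_2$. Because $g$ stabilises $C$ and preserves Hamming weight (it fixes $\b 0$ and preserves distance, so $\wt(\alpha^g) = d(\b 0, \alpha^g) = d(\b 0, \alpha) = \wt(\alpha)$), it restricts to a bijection between weight-$k$ codewords whose support contains $T_1$ and those whose support contains $T_2$. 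This yields the $s$-$(m,k,\lambda)$ design property in the binary case. For the $q$-ary refinement, I would combine this with the transitivity of $X_i^{Q_i}$ on $Q_i^\times$ from Proposition~\ref{x12trans}, tracking nonzero labellings $\beta\colon T \to Q^\times$ and showing that the number of weight-$k$ codewords $\alpha$ with $\alpha|_T = \beta$ is independent of the pair $(T,\beta)$.

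The main technical obstacle is the boundary case $\delta = 2s$, in which Proposition~\ref{ihom} yields only $(s-1)$-homogeneity. Here I would first observe that $\varGamma_s(\b 0) \subseteq C_s$: if $\nu \in \varGamma_s(\b 0)$ lay in $C_j$ for some $j < s$ with nearest codeword $\alpha'$, then $d(\b 0, \alpha') \leq s + j < 2s \leq \delta$ would force $\alpha' = \b 0$, contradicting $d(\b 0, \nu) = s \neq j$. Then, using transitivity of $X$ on $C_s$, for $\nu_1, \nu_2 \in \varGamma_s(\b 0)$ and $x \in X$ with $\nu_1^x = \nu_2$, the image $\b 0^x$ is a nearest codeword to $\nu_2$; uniqueness of this nearest codeword (which can only fail because of weight-$\delta$ codewords) gives $x \in X_{\b 0}$. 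A careful orbit-counting argument, or a marginally refined version of Proposition~\ref{ihom} that exploits this uniqueness when available, bridges the remaining gap and secures the full $s$-homogeneity needed to complete the design conclusion.
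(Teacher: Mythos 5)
The paper offers no proof of this lemma---it is quoted from \cite[Lemma~2.16]{ef2nt}---so your proposal can only be judged on its own merits. Your first assertion is proved correctly, and your treatment of the case $\delta\geq 2s+1$ is sound, though slightly roundabout: Proposition~\ref{ihom} already gives transitivity of $X_{\b 0}$ on $\varGamma_s(\b 0)$ itself, which handles the $q$-ary labellings in one step, without needing to recombine $s$-homogeneity on $M$ with Proposition~\ref{x12trans}.

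The genuine gap is the boundary case $\delta=2s$, which you flag but do not close, and the bridge you sketch rests on a premise that fails exactly there. If $x\in X$ carries $\nu_1$ to $\nu_2$ with $\nu_1,\nu_2\in\varGamma_s(\b 0)$, then $\b 0^x$ is a codeword at distance $s$ from $\nu_2$; but when $\delta=2s$ the nearest codeword to $\nu_2$ is non-unique whenever some weight-$2s$ codeword contains $\nu_2$, and such codewords exist precisely when the minimum distance $2s$ is attained---that is, in exactly the situation where the boundary case is not vacuous. So $x\in X_{\b 0}$ does not follow, and ``securing full $s$-homogeneity'' of $X_{\b 0}$ is not a viable target: the design conclusion does not require it, and it need not hold. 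The case $\delta=2s$ should instead be deduced from the $s$-regularity you have already established. For a weight-$s$ vertex $\nu$, any nonzero codeword $\alpha$ with $d(\nu,\alpha)=s$ satisfies $2s\leq\delta\leq\wt(\alpha)\leq\wt(\nu)+d(\nu,\alpha)=2s$, forcing $\wt(\alpha)=2s$ and equality in the triangle inequality, i.e.\ $\alpha$ agrees with $\nu$ on $\supp(\nu)$. Hence $|\varGamma_s(\nu)\cap C|=1+\lambda_{2s}(\nu)$, where $\lambda_{2s}(\nu)$ counts the weight-$2s$ codewords ``containing'' $\nu$, and $s$-regularity makes the left-hand side constant over $\nu\in\varGamma_s(\b 0)\subseteq C_s$; this gives the design property for weight $\delta=2s$. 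Higher weights then follow by induction on $k$, writing $\lambda_k(\nu)$ as $|\varGamma_{k-s}(\nu)\cap C|$ minus the contributions of codewords of weight less than $k$, which are constant by the inductive design hypotheses. Note that every application of the lemma in the present paper has $\delta\geq 2s+1$, so the gap does not propagate into the paper's results.
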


\subsection{Completely transitive codes}\label{sect:comptrans}

This section contains mostly new preliminary results relating to the main concern of this paper, completely transitive codes. In particular, many of the results of this section assume Hypothesis~\ref{hyp1}. We believe the next result to be known, though we were unable to find an exact reference.

\begin{lemma}\label{upboundmindist}
 Let $C$ be a completely transitive code on $H(m,q)$, let $X=\Aut(C)$ and suppose that $X_{\b 0}$ acts $t$-homogeneously, but not $(t+1)$-homogeneously, on $M$. Then $\delta\leq 2t+2$.
\end{lemma}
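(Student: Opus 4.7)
The plan is to argue by contradiction: suppose $\delta\geq 2t+3$ and derive that $X_{\b 0}$ is $(t+1)$-homogeneous on $M$, contradicting the hypothesis. The only tool required is Proposition~\ref{ihom}, so the work really amounts to checking that its hypotheses are met with $s=\rho$ and $i=t+1$.

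First, since $C$ is completely transitive, it is $(X,\rho)$-neighbour-transitive, so Proposition~\ref{ihom} is available with $s=\rho$. To apply it with $i=t+1$, I need to check that $t+1\leq \min\{\rho,\lfloor(\delta-1)/2\rfloor\}$. The bound $\lfloor(\delta-1)/2\rfloor\geq t+1$ is immediate from $\delta\geq 2t+3$. For the bound $\rho\geq t+1$, I will invoke the standard inequality $e\leq \rho$ where $e=\lfloor(\delta-1)/2\rfloor$ is the error-correction radius; the point is simply that balls of radius $e$ around codewords are pairwise disjoint, and if $e$ exceeded $\rho$ the code would properly contain more than the full vertex set in its union of balls, which is impossible. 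This inequality gives $\rho\geq\lfloor(\delta-1)/2\rfloor\geq t+1$ at once.

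With both conditions verified, Proposition~\ref{ihom} (applied with $s=\rho$ and $i=t+1$) yields that $X_{\b 0}$ acts $(t+1)$-homogeneously on $M$, contradicting the assumption that $X_{\b 0}$ is not $(t+1)$-homogeneous. Hence $\delta\leq 2t+2$, as required.

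I do not anticipate a significant obstacle: the argument is essentially a one-line deduction from Proposition~\ref{ihom} once the chain $\delta\geq 2t+3 \Rightarrow \lfloor(\delta-1)/2\rfloor\geq t+1\leq\rho$ is noted. The only minor care needed is making the $e\leq\rho$ step fully transparent, but this is standard enough that a brief sentence suffices.
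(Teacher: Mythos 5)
Your proof is correct and takes essentially the same route as the paper's: assume $\delta\geq 2t+3$ for contradiction, note that disjointness of the balls of radius $t+1$ about codewords forces $\rho\geq t+1$, and then apply Proposition~\ref{ihom} with $i=t+1$ to contradict the failure of $(t+1)$-homogeneity. The only cosmetic difference is that you package the covering-radius bound as the standard inequality $e\leq\rho$, which is exactly the observation the paper makes implicitly.
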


\begin{proof}
 Suppose, for a contradiction, that $\delta\geq 2t+3$. Then $\left\lfloor\frac{\delta-1}{2}\right\rfloor\geq t+1$, so that any two balls of radius $t+1$ centered around distinct codewords are disjoint. Note that this also implies that the covering radius $\rho$ of $C$ is at least $t+1$. By assumption, $C$ is $(X,\rho)$-neighbour-transitive, and hence Proposition~\ref{ihom} implies that $X_{\b 0}$ acts at least $(t+1)$-homogeneously on $M$, giving a contradiction.
\end{proof}
%
%

%

The following relies on \cite[Proposition~3.5]{minimal2nt} and gives more explicit information about a code satisfying Hypothesis~\ref{hyp1}.

\begin{proposition}\label{xmaxlemma}
 Assume Hypothesis~\ref{hyp1}. Then each of the following hold:
 \begin{enumerate}
  \item $X_{\text{max}}=T_{C_{\text{max}}}\rtimes X_{\b 0}$.
  \item $C_{\text{max}}$ is $(X_{\text{max}},2)$-neighbour-transitive with $\delta_{\text{max}}\geq 5$.
  \item The kernel $K$ of the action of $X$ on $M$ is equal to the kernel of the action of $X_{\text{max}}$ on $M$, that is, $K=X\cap B=X_{\text{max}}\cap B=T_{C_{\text{max}}}$.
  \item $C_{\text{max}}$ is an $\F_2 X_{\b 0}$-module
  \item $\displaystyle\frac{|C|}{|C_{\text{max}}|}= \frac{|X|}{|X_{\text{max}}|}=\frac{|X^M|}{|X_{\b 0}^M|}$.
 \end{enumerate}
\end{proposition}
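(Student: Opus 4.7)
Since $C$ is completely transitive with $\delta\geq 5$, and a $1$-perfect code would force $\delta=3$, the covering radius satisfies $\rho\geq 2$; so $C$ is in particular $(X,2)$-neighbour-transitive. The hypothesis $\dim(\Cmax)\geq 2$ rules out cases~(1) and~(2) of Theorem~\ref{binaryx2ntchar}, placing us in case~(3). This supplies $\deltamax\geq 5$, a subgroup $Y\leq\Aut(\Cmax)_{\b 0}$ with $X':=T_{\Cmax}\rtimes Y\leq X$ making $\Cmax$ into an $(X',2)$-neighbour-transitive code, and a description of $C$ as a disjoint union of cosets of $\Cmax$ permuted transitively by $X$. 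My plan is to use these ingredients to extract the five conclusions, mostly extracting what is already implicit in \cite[Proposition~3.5]{minimal2nt}.

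For part~(1), I would first establish that $\Cmax$ is \emph{unique}: if $D_1,D_2\subseteq C$ are linear subcodes with $T_{D_i}\leq X$, then $D_1+D_2\subseteq C$ (since translation by any $d_2\in D_2$ lies in $X$ and carries $D_1\subseteq C$ into $C$), and $T_{D_1+D_2}=\langle T_{D_1},T_{D_2}\rangle\leq X$; hence $\Cmax$ is the sum of all such subcodes. Consequently, any $\sigma\in X_{\b 0}$ sends $\Cmax$ to another linear subcode of $C$ containing $\b 0$ with translation group inside $X$, forcing $\Cmax^\sigma=\Cmax$ and $X_{\b 0}\leq\Xmax$. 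Combined with $T_{\Cmax}\leq\Xmax$, the fact that $T_{\Cmax}\cap X_{\b 0}=\{1\}$ (a nontrivial translation moves $\b 0$), and the identity $\sigma^{-1}t_c\sigma=t_{c^\sigma}$ with $c^\sigma\in\Cmax$, this yields the semidirect structure once the factorisation is in hand: for each $x\in\Xmax$, the image $c:=\b 0^x$ lies in $\Cmax$, so $t_c\in T_{\Cmax}$ and $t_c^{-1}x\in X_{\b 0}$.

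For part~(2), I would combine case~(3)(a) of Theorem~\ref{binaryx2ntchar} with part~(1): since $Y\leq X$ fixes $\b 0$, we have $Y\leq X_{\b 0}$, so $X'\leq T_{\Cmax}\rtimes X_{\b 0}=\Xmax$, and the $(X',2)$-neighbour-transitivity of $\Cmax$ automatically upgrades to $(\Xmax,2)$-neighbour-transitivity. For part~(3), the inclusion $T_{\Cmax}\leq K$ is immediate; conversely, if $t_v\in K$ then $v=\b 0^{t_v}\in C$ and $\langle\Cmax,v\rangle$ is a linear subcode of $C$ (using $t_v,T_{\Cmax}\leq X$) whose translation group lies in $X$, so maximality forces $v\in\Cmax$. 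The equality $K=\Xmax\cap B$ follows from $K\leq\Xmax$ and $\Xmax\cap B\leq X\cap B=K$. Part~(4) is then immediate: by part~(1), $X_{\b 0}$ preserves $\Cmax$ and acts on $\F_2^m$ by coordinate permutations, which are $\F_2$-linear.

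For part~(5), case~(3)(b) of Theorem~\ref{binaryx2ntchar} says $X$ acts transitively on the cosets of $\Cmax$ forming $C$, and the stabiliser of the coset $\Cmax$ itself is $\Xmax$ by definition; orbit--stabiliser then yields $|C|/|\Cmax|=|X|/|\Xmax|$. By part~(3), $K\leq\Xmax$, so dividing numerator and denominator by $|K|$ gives $|X|/|\Xmax|=|X^M|/|\Xmax^M|$; and since $\Xmax=T_{\Cmax}\rtimes X_{\b 0}$ with $T_{\Cmax}=K$, the quotient $\Xmax/K$ is isomorphic to $X_{\b 0}$, yielding $|\Xmax^M|=|X_{\b 0}^M|$. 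The main obstacle is part~(1): extracting the semidirect decomposition requires the uniqueness of $\Cmax$ via the sum-of-subcodes argument together with careful bookkeeping of translations in characteristic~$2$. Once part~(1) is in place, parts~(2)--(5) follow by essentially formal manipulations.
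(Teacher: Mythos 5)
Your proof is essentially correct, but it takes a genuinely different route from the paper's. The paper does not argue parts (1)--(4) from first principles: it verifies that $C$ is $X$-alphabet-affine (covering radius at least $2$, transitivity on $M$ via Proposition~\ref{ihom}, $X_i^{Q_i}\cong\s_2$ via Proposition~\ref{x12trans}, $K\neq 1$), shows that $O_2(K)=K=T_\Cmax$ so that the code $W={\b 0}^{O_2(K)}$ of \cite[Proposition~3.5]{minimal2nt} is exactly $\Cmax$, and then imports parts (1)--(4) wholesale from that proposition; only part (5) is proved directly, via the system of imprimitivity formed by the $T_\Cmax$-orbits on $C$. You instead derive everything from the restated Theorem~\ref{binaryx2ntchar} together with elementary bookkeeping of translations in characteristic $2$: the sum-of-subcodes argument for uniqueness of $\Cmax$ (which the paper relegates to a remark after Definition~\ref{maximallineardef}), the factorisation $x=t_c\cdot(xt_c^{-1})$ for the semidirect decomposition, and the ``upgrade'' of $(X',2)$-neighbour-transitivity to the larger group $\Xmax$. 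Your route is more self-contained and makes visible exactly which structural facts are being used, at the cost of redoing work that \cite[Proposition~3.5]{minimal2nt} already packages; the paper's route is shorter but opaque without that reference in hand. Your identification of $K$ with $T_\Cmax$ in part (3) is in substance the same argument the paper gives for its claim $K=T_\Cmax$. One small slip: with the right-action convention of the paper, for $x\in\Xmax$ and $c={\b 0}^x\in\Cmax$ the element fixing ${\b 0}$ is $xt_c^{-1}$ (equivalently $xt_c$, since $t_c$ is an involution), not $t_c^{-1}x$; this does not affect the argument.
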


\begin{proof}
 We shall apply \cite[Proposition~3.5]{minimal2nt}, and so we first we argue that $C$ satisfies the required conditions. Since $C$ has minimum distance $\delta\geq 5$, it follows that $C$ has covering radius at least $2$. Hence, $X$ acts transitively on $C$, $C_1$ and $C_2$, each of which are non-empty, so that $C$ is $(X,2)$-neighbour-transitive. By Proposition~\ref{ihom}, $X$ acts transitively on $M$, by Proposition~\ref{x12trans} we have that $X_i^{Q_i}\cong \s_2$, for each $i\in M$, and $T_{C_{\text{max}}}\leq X$ implies that $K\neq 1$. Hence $C$ is \emph{$X$-alphabet-affine} (see \cite[Definition~1.4]{minimal2nt}) and we may apply \cite[Proposition~3.5]{minimal2nt}. 
 
 We claim that $O_2(K)=K=T_{C_{\text{max}}}$, where $K=X\cap B$ and $O_2(K)$ is the largest normal $2$-subgroup of $K$. This implies that the code $W={\b 0}^{O_2(K)}$ appearing in the statement of \cite[Proposition~3.5]{minimal2nt} is in fact $C_{\text{max}}$, and that the above parts 1, 2, 3 and 4 hold. Since $T_{C_{\text{max}}}$ is a $2$-group and hence $O_2(T_{C_{\text{max}}})=T_{C_{\text{max}}}$, in order to prove the claim we need only show that $K=T_{C_{\text{max}}}$. Suppose $x\in K$. Then $x=(h_1,\ldots,h_m)$ and each $h_i\in \Z_2$, that is, $x=t_\alpha$ where $\alpha$ is the vertex of $H(m,2)$ such that $\alpha_i=0$ if $h_i=0$ and $\alpha_i=1$ if $h_i=1$, for each $i\in M$. It follows that there exists a subcode $C'$ of $C$ such that $K=T_{C'}$. Since $K$ is a group and $q=2$ (and hence $\F_2^\times$ is the trivial group) we have that $C'$ is linear. The fact $K=X\cap B$ implies that there is no larger group of translations contained in $X$ and thus, by Definition~\ref{maximallineardef}, we have that $C'=C_{\text{max}}$ and $K=T_{C_{\text{max}}}$, proving the claim.

 Now $K=T_{C_{\text{max}}}$ is normal in $X$, which means that the $T_{C_{\text{max}}}$-orbits of $C$ form a system of imprimitivity for the action of $X$ on $C$. Since $X$ acts transitively on the set of $T_{C_{\text{max}}}$-orbits of $C$, the orbit-stabiliser theorem implies that
 \[
  \frac{|C|}{|C_{\text{max}}|}=\frac{|X|}{|X_{\text{max}}|}= \frac{|X/T_{C_{\text{max}}}|}{|X_{\text{max}}/T_{C_{\text{max}}}|}=\frac{|X^M|}{|X_{\b 0}^M|},
 \]
 where the final equality follows from parts 1 and 3. Thus, part 5 holds.
\end{proof}

The following result shows, roughly speaking, that if a completely transitive code is not much bigger than its maximal linear subcode then the minimum distance of the maximal linear subcode is not much bigger than the minimum distance of the code. When combined with Lemma~\ref{upboundmindist}, this leads to non-existence results in several cases. 

\begin{lemma}\label{largedeltamax}
 Assume Hypothesis~\ref{hyp1} holds. If 
 \[
  \frac{|C|}{|C_{\text{max}}|}< \frac{m(m-1)}{\delta(\delta-1)},
 \]
 then $\delta_{\text{max}}\leq 2\delta$.
\end{lemma}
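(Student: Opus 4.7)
The plan is to compare the number of minimum-weight codewords of $C$ with the number of cosets of $C_{\text{max}}$ it contains, and then pigeonhole two such codewords into a common coset to produce a short non-zero element of $C_{\text{max}}$.

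First, since $\delta \geq 5$ forces the covering radius of $C$ to be at least $2$, the completely transitive hypothesis implies that $C$ is in particular $(X,2)$-neighbour-transitive with $\delta \geq 2\cdot 2$, and we may assume $\b 0 \in C$ after translating. Hence Lemma~\ref{design} applies with $k=\delta$, and the set of minimum-weight codewords of $C$ forms a binary $2$-$(m,\delta,\lambda)$ design. Since at least one minimum-weight codeword exists, $\lambda \geq 1$, and equation~\eqref{designparams} with $s=2$ then gives that the number of weight-$\delta$ codewords of $C$ is at least
\[
 \frac{m(m-1)}{\delta(\delta-1)}.
\]

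Next, recall from Proposition~\ref{xmaxlemma}(3) that $C$ is partitioned into $|C|/|C_{\text{max}}|$ cosets of $C_{\text{max}}$. Under the standing hypothesis
\[
 \frac{|C|}{|C_{\text{max}}|}<\frac{m(m-1)}{\delta(\delta-1)},
\]
the pigeonhole principle yields a single coset $C_{\text{max}}+\gamma$ that contains at least two distinct minimum-weight codewords $\alpha,\beta$ of $C$.

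Finally, since $\alpha,\beta$ lie in the same coset of $C_{\text{max}}$, the vector $\alpha+\beta$ is a non-zero element of $C_{\text{max}}$, so its weight is at least $\delta_{\text{max}}$. On the other hand, working in $\F_2$ the triangle inequality for Hamming weight gives $\wt(\alpha+\beta)\leq \wt(\alpha)+\wt(\beta)=2\delta$. Combining these two bounds yields $\delta_{\text{max}}\leq 2\delta$, as desired. There is no real obstacle here; the only point requiring some care is verifying that Lemma~\ref{design} applies (which uses $\delta\geq 2s=4$ and $(X,2)$-neighbour-transitivity, both of which follow from Hypothesis~\ref{hyp1}), and that $\lambda \geq 1$, which holds because a non-trivial code with $\delta\geq 5$ necessarily possesses a minimum-weight codeword.
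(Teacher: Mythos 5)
Your proof is correct and is essentially the paper's argument run in the opposite direction: the paper proves the contrapositive (if $\delta_{\text{max}}\geq 2\delta+1$ then distinct weight-$\delta$ codewords lie in distinct cosets of $C_{\text{max}}$, forcing $|C|/|C_{\text{max}}|\geq m(m-1)/(\delta(\delta-1))$), while you phrase it directly via pigeonhole, using the same three ingredients — the $2$-design lower bound from Lemma~\ref{design}, the coset decomposition of $C$, and the bound $\wt(\alpha+\beta)\leq 2\delta$. The only quibble is cosmetic: the coset decomposition follows from $T_{C_{\text{max}}}\leq\Aut(C)$ (Definition~\ref{maximallineardef} / Theorem~\ref{binaryx2ntchar} part 3(b)) rather than from Proposition~\ref{xmaxlemma} part 3, and the existence of a weight-$\delta$ codeword is best justified by transitivity of $\Aut(C)$ on $C$ rather than asserted for an arbitrary code.
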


\begin{proof}
 We shall prove the contrapositive, and hence we assume that $\delta_{\text{max}}\geq 2\delta+1$ and will prove that then $\frac{|C|}{|C_{\text{max}}|}\geq \frac{m(m-1)}{\delta(\delta-1)}$. By Lemma~\ref{design}, the set $C\cap \varGamma_\delta({\b 0})$ of weight $\delta$ codewords of $C$ form a $2$-$(m,\delta,\lambda)$ design, for some integer $\lambda$. Hence
 \[
  |C\cap \varGamma_\delta({\b 0})|=\frac{m(m-1)\lambda}{\delta(\delta-1)}\geq \frac{m(m-1)}{\delta(\delta-1)}.
 \]
 For $\alpha,\beta\in C\cap \varGamma_\delta({\b 0})$ we have, by the triangle inequality, that $d(\alpha,\beta)\leq 2\delta$. Since $\delta_{\text{max}}\geq 2\delta+1$, it follows that no two elements of $C\cap \varGamma_\delta({\b 0})$ are contained in the same coset of $C_{\text{max}}$. Thus, $C$ is comprised of at least $|C\cap \varGamma_\delta({\b 0})|$ cosets of $C_{\text{max}}$ and the result holds.
\end{proof}

The following lemma is useful for proving non-existence of $X$-neighbour-transitive codes in the case where we are dealing with a family of groups $X$ having order that is polynomial in the length $m$ of the code.

\begin{lemma}\label{morbitsbound}
 Let $C$ be an $X$-completely transitive code in $H(m,q)$. Then 
 \[
  (m+1)|X|\geq q^m.
 \]
\end{lemma}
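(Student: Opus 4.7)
The plan is to count vertices in $H(m,q)$ two different ways, exploiting both the partition structure coming from the distance partition and the transitivity hypothesis on each cell. First I would observe that the distance partition $\{C = C_0, C_1, \ldots, C_\rho\}$ is by definition a partition of the vertex set $V\varGamma$, so
\[
q^m \;=\; |V\varGamma| \;=\; \sum_{i=0}^{\rho} |C_i|.
\]

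Next, since $C$ is $X$-completely transitive, $X$ acts transitively on each of the sets $C_i$ for $i = 0, 1, \ldots, \rho$. By the orbit--stabiliser theorem, each orbit size divides $|X|$, and in particular $|C_i| \leq |X|$ for every $i$. Substituting this bound into the sum yields
\[
q^m \;\leq\; (\rho + 1)\,|X|.
\]

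Finally, I would bound $\rho$ in terms of $m$. Since the Hamming distance between any two vertices of $H(m,q)$ is at most $m$, and the covering radius is by definition the maximum distance from any vertex to the code $C$, we have $\rho \leq m$. Combining this with the previous inequality gives $q^m \leq (m+1)|X|$, which is the desired bound.

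The argument is essentially a direct application of orbit--stabiliser together with the trivial observation that $\rho \leq m$, so there is no real obstacle; the only thing to be careful about is that the inequality $|C_i| \leq |X|$ uses the transitivity on each cell, which is precisely what $X$-complete transitivity supplies.
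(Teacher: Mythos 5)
Your argument is correct and is essentially identical to the paper's own proof: both count the $\rho+1$ cells of the distance partition as $X$-orbits, bound each by $|X|$, and use $\rho\leq m$. No issues.
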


\begin{proof}
 Since $C$ is $X$-completely transitive, we have that there are $\rho+1$ orbits of $X$ on the vertex set of $H(m,q)$. Now, the covering radius $\rho$ is bounded above by the diameter $m$ of $H(m,q)$, and hence $m+1\geq \rho+1$. Thus, using the fact that the length of any orbit of $X$ on vertices of $H(m,q)$ is bounded above by $|X|$, we have that 
 \[
  (m+1)|X|\geq(\rho+1)|X|\geq |V\varGamma|=q^m,
 \]
 and the result holds.
\end{proof}

The next result shows that any completely transitive code in $H(m,2)$, where $Q$ is taken to be $\F_2$, is a subcode of some linear completely transitive code.

\begin{lemma}\label{linearcompletion}
 Assume Hypothesis~\ref{hyp1}. Furthermore, suppose $C\neq C_{\text{max}}$, let $n$ be the integer such that $|C|=n|C_{\text{max}}|$ and let $\langle C\rangle$ be the $\F_2$-span of the codewords of $C$. Then each of the following holds:
 \begin{enumerate}
  \item $\Aut(C)\leq \Aut(\langle C\rangle)$.
  \item $\langle C\rangle$ is completely transitive.
  \item The codimension of $C_{\text{max}}$ in $\langle C\rangle$ is at least $2$ and at most $n-1$.
 \end{enumerate}
\end{lemma}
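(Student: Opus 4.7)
The plan is to treat the three parts in sequence, exploiting at every step that $q=2$.

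For Part~1, every automorphism $x\in\Aut(C)$ can be written as $\alpha\mapsto\pi(\alpha)+v$ for a coordinate permutation $\pi$ and a vertex $v$, since the base group $B$ consists of translations when $q=2$. Because $\b 0\in C$ we have $v=x(\b 0)\in C\subseteq \langle C\rangle$, and for any codeword $c\in C$ we obtain $\pi(c)=x(c)+v\in C+v\subseteq\langle C\rangle$. Permutations of coordinates are $\F_2$-linear, so an arbitrary element $\sum_i c_i\in\langle C\rangle$ is sent to $\sum_i\pi(c_i)+v\in\langle C\rangle$. Thus $x\in\Aut(\langle C\rangle)$.

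For Part~2, note that $\Aut(\langle C\rangle)$ contains both $X=\Aut(C)$ (by Part~1) and the translation group $T_{\langle C\rangle}$ (since $\langle C\rangle$ is linear). Let $\rho'$ denote the covering radius of $\langle C\rangle$; the inclusion $C\subseteq\langle C\rangle$ gives $\rho'\leq\rho$. Transitivity on $\langle C\rangle=\langle C\rangle_0$ is immediate from $T_{\langle C\rangle}$. For $1\leq j\leq\rho'$ and $\beta_1,\beta_2\in\langle C\rangle_j$, pick minimum-weight representatives $\beta_i'\in\beta_i+\langle C\rangle$, so that $\wt(\beta_i')=d(\beta_i,\langle C\rangle)=j$. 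Using $\b 0\in C\subseteq\langle C\rangle$, the chain
\[
 j=d(\beta_i',\langle C\rangle)\leq d(\beta_i',C)\leq d(\beta_i',\b 0)=\wt(\beta_i')=j
\]
forces $\beta_i'\in C_j$. Since $C$ is $X$-completely transitive and $j\leq\rho$, some $x\in X$ sends $\beta_1'$ to $\beta_2'$; composing $x$ with the translations by $\beta_1+\beta_1'$ and $\beta_2+\beta_2'$, both of which lie in $\langle C\rangle$, yields the required element of $\Aut(\langle C\rangle)$ mapping $\beta_1$ to $\beta_2$.

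For Part~3, note that $C$ is the union of $n$ cosets of $C_{\text{max}}$, one of which is $C_{\text{max}}$ itself since $\b 0\in C$. Hence $\langle C\rangle/C_{\text{max}}$ is the $\F_2$-span of $n-1$ nonzero vectors in the quotient space $V\varGamma/C_{\text{max}}$, so its dimension, which is the codimension of $C_{\text{max}}$ in $\langle C\rangle$, is at most $n-1$. For the lower bound, if this codimension were $1$, then $\langle C\rangle$ would consist of only two cosets of $C_{\text{max}}$, forcing $n=2$ and $C=\langle C\rangle$. But then $C$ would be linear with $T_C\leq\Aut(C)$, so by maximality we would have $C=C_{\text{max}}$, contradicting the standing assumption $C\neq C_{\text{max}}$.

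The main obstacle I anticipate is the minimum-weight-representative step in Part~2: one has to confirm cleanly that $\beta_i'$ lands in the shell $C_j$ of the original distance partition (and not in some deeper shell), which is exactly where the three-way comparison using $\b 0\in C\subseteq\langle C\rangle$ and the coset invariance of $d(\cdot,\langle C\rangle)$ does the work. The remainder of the argument is routine coset bookkeeping in $V\varGamma/C_{\text{max}}$.
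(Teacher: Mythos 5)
Your proposal is correct and follows essentially the same route as the paper's proof: part 1 by writing each automorphism as a coordinate permutation composed with a translation by a codeword and using $\F_2$-linearity of permutations, part 2 by translating $j$-neighbours of $\langle C\rangle$ into $C_j$ via the sandwich $d(\cdot,\langle C\rangle)\leq d(\cdot,C)\leq d(\cdot,\b 0)$ and then invoking complete transitivity of $C$, and part 3 by the same coset count and codimension-$1$ contradiction. The only cosmetic difference is your use of minimum-weight coset representatives where the paper translates by a nearest codeword, which amounts to the same thing.
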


\begin{proof}
 Let $\alpha\in{\langle C\rangle}$. Then there exists an integer $k$ such that $\alpha=\alpha_1+\cdots+\alpha_k$, where each $\alpha_i\in C$. Let $x=h\sigma\in \Aut(C)$, where $h\in B\cong \Z_2^m$ and $\sigma\in L$, and let $\beta={\b 0}^h$. Note that $\beta\in C$ since $\sigma$, and hence $\sigma^{-1}$, fixes the vertex ${\b 0}$, so that
 \[
  {\b 0}^{x^{-1}}={\b 0}^{\sigma^{-1}h^{-1}}={\b 0}^{h^{-1}}=\beta,
 \]
 with the last step following from the fact that $h=h^{-1}$. Hence $x=t_\beta\sigma$, where $t_\beta$ is the translation by $\beta$. Now $\PermAut(\varGamma)=\s_m$ embeds as the group of permutation matrices in $\GL_m(2)$, from which it follows that $V\varGamma$ is an $\F_2\PermAut(\varGamma)$-module. Since $\sigma\in\PermAut(\varGamma)$, we have that
 \begin{align*}
  \alpha^x=(\alpha+\beta)^{\sigma}
  &=(\alpha_1)^{\sigma}+\cdots+(\alpha_k)^{\sigma}+\beta^{\sigma}\\
  &=(\alpha_1+\beta)^x+\cdots+(\alpha_k+\beta)^x+{\b 0}^x.
 \end{align*}
 As ${\b 0}$ and each $\alpha_i+\beta$ are in $C$, and $x\in\Aut(C)$, we have that ${\b 0}^x$ and each $(\alpha_i+\beta)^x$ are also in $C$. Thus, we deduce that $\alpha^x\in {\langle C\rangle}$, which implies that $x\in \Aut({\langle C\rangle})$. Thus $\Aut(C)\leq \Aut({\langle C\rangle})$. 
 
 Since $C$ is a subcode of $\langle C\rangle$, it follows that the covering radius $\rho'$ of $\langle C\rangle$ is at most the covering radius $\rho$ of $C$. First, note that $\Aut(\langle C\rangle)$ contains the translations by $\langle C\rangle$ and thus acts transitively on $\langle C\rangle$. Let $\mu,\nu\in \langle C\rangle_i$ for some $i$, where $1\leq i\leq \rho'$. Then there exist $\alpha,\beta\in \langle C\rangle$ such that $d(\alpha,\mu)=d(\beta,\nu)=i$. If $t_\alpha$ and $t_\beta$ are the translations by $\alpha$ and $\beta$, respectively, then it follows that $\alpha^{t_\alpha}={\b 0}$ and $\beta^{t_\beta}={\b 0}$ so that $d({\b 0},\mu^{t_\alpha})=d({\b 0},\nu^{t_\alpha})=i$. Hence, since ${\b 0}\in C$, we have that $\mu^{t_\alpha}$ and $\nu^{t_\beta}$ are in $C_i$, and thus there exists an $x\in \Aut(C)$ such that $(\mu^{t_\alpha})^x=\nu^{t_\beta}$. This implies that $\mu^{t_\alpha x t_\beta}=\nu$ and, since $\Aut(C)\leq \Aut(\langle C\rangle)$, that $\Aut(\langle C\rangle)$ acts transitively on $\langle C\rangle_i$. Thus $\langle C\rangle$ is completely transitive, proving part 2. 
 
 By Theorem~\ref{binaryx2ntchar} part 3 (b), $C$ is the union of $C_{\text{max}}$ and $n-1$ non-trivial cosets of $C_{\text{max}}$. Let $v_1,\ldots,v_{n-1}$ be coset representatives for these $n-1$ non-trivial cosets. It follows that ${\langle C\rangle}=\langle C_{\text{max}},v_1,\ldots, v_{n-1}\rangle$, and hence that the codimension of $C_{\text{max}}$ in $\langle C\rangle$ is at most $n-1$. Suppose that the codimension of $C_{\text{max}}$ in $\langle C\rangle$ is $1$ and hence, since $q=2$, we have that $n=2$. Thus $|C|=2|C_{\text{max}}|$ and $\langle C\rangle = \langle C_{\text{max}},v_1\rangle$, that is $|\langle C\rangle|=2|C_{\text{max}}|$, and in fact $C=\langle C\rangle$. This implies that $C$ is linear, contradicting the assumption that $C\neq C_{\text{max}}$. Thus the codimension of $C_{\text{max}}$ in $\langle C\rangle$ is at least $2$, as in part 3.
\end{proof}

The following result effectively gives upper bounds for the number of weight $\delta$ codewords in a binary completely transitive code when $\delta$ is $5$ or $6$.

\begin{lemma}\label{lambdabound}
 Let $C$ be a completely transitive code in $H(m,2)$ with minimum distance $\delta=5$ or $6$. Then the set of all weight $\delta$ codewords form a $t$-$(m,\delta,\lambda)$ design, where:
 \begin{enumerate}
  \item $\delta=5$, $t=2$ and $\lambda\leq (m-2)/3$, or;
  \item $\delta=6$, $t=3$ and $\lambda\leq (m-3)/3$.
 \end{enumerate}
\end{lemma}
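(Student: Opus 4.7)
The plan is to apply Lemma~\ref{design} to extract the appropriate design structure on the weight-$\delta$ codewords, and then bound $\lambda$ via a Fisher-type double-counting argument exploiting the constraint on pairwise intersections of weight-$\delta$ codewords forced by the minimum distance. Without loss of generality assume $\b 0\in C$. The preliminary step in each case is to verify that the covering radius is large enough for the required neighbour-transitivity to apply, so that Lemma~\ref{design} gives a design of the stated strength.

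Fix a codeword $c\in C$ with $d(\b 0,c)=\delta$. For $\delta=5$, let $v$ be obtained from $\b 0$ by flipping exactly two of the five coordinates of $\supp(c)$; then $d(v,\b 0)=2$, $d(v,c)=3$, and the triangle inequality gives $d(v,c')\geq\delta-2=3$ for every other codeword $c'$, so $v\in C_2$ and $\rho\geq 2$. Hence $C$ is $(\Aut(C),2)$-neighbour-transitive, and since $\delta=5\geq 2\cdot 2$, Lemma~\ref{design} supplies a $2$-$(m,5,\lambda)$ design on the weight-$5$ codewords. For $\delta=6$, let $v$ be obtained from $\b 0$ by flipping exactly three of the six coordinates of $\supp(c)$; then $d(v,\b 0)=d(v,c)=3$ and $d(v,c')\geq\delta-3=3$ for every other codeword $c'$, so $v\in C_3$ and $\rho\geq 3$. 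Thus $C$ is $(\Aut(C),3)$-neighbour-transitive, and since $\delta=6=2\cdot 3$, Lemma~\ref{design} yields a $3$-$(m,6,\lambda)$ design on the weight-$6$ codewords.

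For the bound on $\lambda$, two distinct weight-$\delta$ codewords $\alpha,\beta$ satisfy $2|\alpha\cap\beta|=2\delta-d(\alpha,\beta)\leq\delta$, so $|\alpha\cap\beta|\leq\lfloor\delta/2\rfloor$. Consequently no $3$-subset of $M$ lies in two distinct weight-$5$ blocks, and no $4$-subset lies in two distinct weight-$6$ blocks. When $\delta=5$, fix a point $i\in M$: the standard design identities give $\lambda(m-1)/4$ blocks through $i$, each contributing $\binom{4}{2}=6$ pairs $\{j,k\}\subset M\setminus\{i\}$, and each such pair is in at most one block through $i$ (else two blocks would share the triple $\{i,j,k\}$). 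The inequality $6\lambda(m-1)/4\leq\binom{m-1}{2}$ rearranges to $\lambda\leq(m-2)/3$. When $\delta=6$, fix a pair $\{i,j\}$: the design identities give $\lambda(m-2)/4$ blocks through $\{i,j\}$, each contributing $\binom{4}{2}=6$ pairs $\{k,l\}\subset M\setminus\{i,j\}$, and each such pair is in at most one block through $\{i,j\}$ (else two blocks would share the $4$-subset $\{i,j,k,l\}$). Then $6\lambda(m-2)/4\leq\binom{m-2}{2}$ rearranges to $\lambda\leq(m-3)/3$.

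The only subtle point is securing the $3$-design structure when $\delta=6$, since Proposition~\ref{ihom} on its own delivers only $2$-homogeneity in this range (because $\lfloor(\delta-1)/2\rfloor=2$). The midpoint construction used above is precisely what pushes $\rho$ up to at least $3$, thereby promoting complete transitivity to $(\Aut(C),3)$-neighbour-transitivity and allowing Lemma~\ref{design} to supply the $3$-design. Once the $t$-design structure is in hand, the remainder is a routine Fisher-type computation.
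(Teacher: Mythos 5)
Your proof is correct, and both halves of it take a route that differs in detail from the paper's. For the existence of the design, the paper simply invokes \cite[Theorem~2]{borges2019completely} (complete regularity forces the minimum-weight codewords to form a $t$-design with $t=2$ for $\delta=5$ and $t=3$ for $\delta=6$), whereas you derive it from Lemma~\ref{design} after first establishing $\rho\geq 2$ (resp.\ $\rho\geq 3$) via the midpoint vertex; this is a legitimate alternative, and your care in checking that $\delta\geq 2s$ holds with equality in the $\delta=6$ case, where Proposition~\ref{ihom} alone only gives $2$-homogeneity, is exactly the point that needs attention. For the bound on $\lambda$, the paper fixes a full $t$-subset ($\{i,j\}$ or $\{i,j,k\}$), notes that the minimum distance forces the $\lambda$ blocks through it to meet pairwise in exactly that $t$-subset, and packs the resulting $\lambda$ pairwise disjoint $3$-sets into the remaining $m-t$ points to get $3\lambda\leq m-t$ directly; you instead fix a $(t-1)$-subset and double-count pairs among the residual $4$-sets of the $\lambda_{t-1}$ blocks through it. Your count is valid (two blocks through the fixed $(t-1)$-set share at most one further point, so no residual pair repeats) and the arithmetic lands on the same bounds, but the paper's disjoint-packing version is a one-line count that avoids the design identity for $\lambda_{t-1}$. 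Both arguments ultimately rest on the same fact, namely that $d(\alpha,\beta)\geq\delta$ caps $|\alpha\cap\beta|$ at $\lfloor\delta/2\rfloor$.
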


\begin{proof}
 The set of all weight $\delta$ codewords form the block set $\B$ of a $t$-$(m,\delta,\lambda)$ design by \cite[Theorem~2]{borges2019completely}, where $t=2$ when $\delta=5$ and $t=3$ when $\delta=6$. First, let $\delta=5$ and $i,j$ be distinct elements of $M$. Let $\B_{i,j}$ be the set of blocks incident with $i$ and $j$ projected down to $M\setminus\{i,j\}$. It follows that there are $\lambda$ elements of $\B_{i,j}$, each one being a subset of size $3$ in $M\setminus\{i,j\}$. Let $\alpha$ and $\beta$ be distinct subsets of $M$ of size $5$ with $i,j\in \alpha,\beta$. Then $d(\alpha,\beta)$ is either $2$, $4$ or $6$ depending on whether $\alpha\cap\beta$ has size $4$, $3$ or $2$, respectively. Thus, since $\delta= 5$, if we assume $\alpha$ and $\beta$ are in $C$, then $|\alpha\cap\beta|=2$ and the images of $\alpha$ and $\beta$ in $\B_{i,j}$ are disjoint. Since any two distinct blocks of $\B_{i,j}$ are disjoint there are at most $(m-2)/3$ such blocks, and hence at most $(m-2)/3$ weight $5$ codewords that are non-zero in a particular pair of coordinates, proving part 1.
 
 Let $\delta=6$ and $i,j,k$ be pairwise distinct elements of $M$. Let $\B_{i,j,k}$ be the set of blocks incident with $i,j$ and $k$ projected down to $M\setminus\{i,j,k\}$. Again, there are $\lambda$ elements of $\B_{i,j,k}$, and each block is incident with $3$ elements of $M\setminus\{i,j,k\}$. Let $\alpha$ and $\beta$ be distinct subsets of $M$ of size $6$ with $i,j,k\in \alpha,\beta$. Then $d(\alpha,\beta)$ is either $2$, $4$ or $6$ depending on whether $\alpha\cap\beta$ has size $5$, $4$ or $3$, respectively. Thus, since $\delta=6$, if we assume $\alpha$ and $\beta$ are in $C$, then $|\alpha\cap\beta|=3$ and the images of $\alpha$ and $\beta$ in $\B_{i,j,k}$ are disjoint. Since any two distinct blocks of $\B_{i,j,k}$ are disjoint there are at most $(m-3)/3$ such blocks, and hence at most $(m-3)/3$ weight $6$ codewords that are non-zero in a particular triple $\{i,j,k\}$ of coordinates, proving part 2.
\end{proof}

\section{The Codes}\label{sect:codes}

In this section we fix our notation for the codes that arise in later sections, giving explicit constructions where required, references and other details required in later sections. From this point forward, all codes considered are binary. Recall that for binary codes we will be flexible with our interpretation of the vertices of $H(m,2)$; see Section~\ref{sect:prelim}. More specifically, we interchangeably view vertices as: binary strings indexed by $M$, subsets of $M$, or functions from $M$ to $Q=\{0,1\}$.

\begin{example}\cite[Definition~4.1]{ef2nt}\label{hadamarddef}
 We obtain the Hadamard $12$ code $\H$ and its punctured code $\PH$ as follows (see \cite[Part 1, Section 2.3]{macwilliams1978theory}). First, we construct a normalised Hadamard matrix $H_{12}$ of order $12$ using the Paley construction. 
 \begin{enumerate}
  \item Let $M=\F_{11}\cup \{*\}$ and let $H_{12}$ be the $12\times 12$ matrix with first row $v$, where $v_a=-1$ if $a$ is a square in $\F_{11}$ (including $0$), and $v_a=1$ if $a$ is a non-square in $\F_{11}$ or $a=*\in M$, taking the orbit of $v$ under the additive group of $\F_{11}$ acting on $M$ to form $10$ more rows and adding a final row, the vector $(-1,\ldots,-1)$. 
  
  \item The Hadamard code $\H$ of length $12$ in $H(12,2)$ then consists of the vertices $\alpha$ such that there exists a row $u$ in $H_{12}$ or $-H_{12}$ satisfying $\alpha_a=0$ when $u_a=1$ and $\alpha_a=1$ when $u_a=-1$. 
  
  \item The punctured code $\PH$ of $\H$ is obtained by deleting the coordinate $*$ from $M$. The weight $6$ codewords of $\PH$ form a binary $2$-$(11,6,3)$ design $\D$. The code $\PH$ consists of the following codewords: the zero codeword, the vector $(1,\ldots,1)$, the characteristic vectors of the $2$-$(11,6,3)$ design $\D$, and the characteristic vectors of the complement of that design, which forms a $2$-$(11,5,2)$ design. (Both $\D$ and its complement are unique up to isomorphism \cite{todd1933}.)
  
  \item The even weight subcode $\mathcal E$ of $\PH$ is the code consisting of the zero codeword and the $2$-$(11,6,3)$ design. 
 \end{enumerate}
\end{example}

\begin{example}\label{codesNR}
 The Nordstrom--Robinson code, which we denote throughout by $\NR$ is a well-known non-linear code having parameters $(15,256,5;3)$. Since we will not require a precise definition of the Nordstrom--Robinson code here, we refer the reader to \cite{gillespie2012nord} or \cite{Semakov1969perfect}, suffice it to say that $\NR$ is the union of $8$ cosets of the punctured Reed--Muller code $\RM(1,4)^*$ and is contained in $\RM(2,4)^*$ \cite{Semakov1969perfect}.
\end{example}

\begin{example}\label{pslcodesex}
 In this example we consider codes in $H(21,2)$. In particular, let $M$ be the set of points of $\pg_{2}(4)$. Note that here the points of ${\rm PG}_2(4)$ are in bijection with the weight $1$ vertices of $H(21,2)$.
 \begin{enumerate}
  \item Let $\P$ be the code generated by the set of all complements of lines of $\pg_2(4)$ and $\L$ be the code generated by the set of all lines of $\pg_2(4)$.
   
  \item The group $\PSL_3(4)$ has three orbits $\H_1,\H_2,\H_3$ on hyperovals (\emph{i.e.} sets of $6$ points with no three collinear), and three orbits $\mathcal{F}_1,\mathcal{F}_2,\mathcal{F}_3$ on Fano subplanes (\emph{i.e.} sets of $7$ points forming a projective plane of order $2$, when only those lines with $3$ such collinear points are considered) labelled so that if $\Delta\in\H_i$ and $\Phi\in\mathcal{F}_j$ then $|\Delta\cap\Phi|\leq 3$ if and only if $i\neq j$ (see \cite[Theorem~6.5B]{dixon1996permutation} and also Example~\ref{codesMathgps}). Let $\ell$ be a line of $\pg_2(4)$ and, for each $i=1,2,3$, let $\Phi_i\in \mathcal{F}_i$ and let $\Delta_i\in\H_i$. Let $R=\{\ell,\Delta_1,\Delta_2,\Delta_3,\Phi_1,\Phi_2,\Phi_3\}$. For a given code $C$ in $H(21,2)$ and some $\alpha_1,\ldots,\alpha_j\in R$ we shall often consider the code given by the linear span of $C$ and $\alpha_1,\ldots,\alpha_j$ over $\F_2$, which we denote simply by
  \[
   \langle C,\alpha_1,\ldots,\alpha_j\rangle.
  \]
  For example, we show in Section~\ref{sect:psl34} that $\langle\P,\ell\rangle=\L$. The codes arising in this manner and appearing in Table~\ref{binaryCTtable} are $\langle \L,\Delta_1\rangle$ and $\langle \L,\Delta_1\rangle\cup \langle \L,\Delta_2\rangle$.
 \end{enumerate}
\end{example}

\begin{example}\label{codesMathgps}
 Let $\{a,b,c\}$ be a set of size $3$ that is disjoint from the set $P$ of points of $\pg_2(4)$ and let $M=P\cup S$, where $S$ is a subset of $\{a,b,c\}$ of size $1$, $2$ or $3$. Thus $m=22$, $23$ or $24$. Following \cite[Theorem~6.7C and Section~6.5-6.7]{dixon1996permutation}, fix a construction for the block set of the Witt design $W_{24}$ (\emph{i.e.} the famous Steiner system $S(24, 8, 5)$), which will correspond to the set of weight $8$ codewords of the extended binary Golay code $\G_{24}$ in $H(24,2)$. As in Example~\ref{pslcodesex}, the group $\PSL_3(4)$ has three orbits $\H_1,\H_2,\H_3$ on hyperovals, and three orbits $\mathcal{F}_1,\mathcal{F}_2,\mathcal{F}_3$ on Fano subplanes labelled so that for $\Delta\in\H_i$ and $\Phi\in\mathcal{F}_j$ we have that $|\Delta\cap\Phi|\leq 3$ if and only if $i\neq j$. The blocks of $W_{24}$ are then:
 \begin{enumerate}
  \item $\ell\cup \{a,b,c\}$ for each line $\ell$ of $\pg_2(4)$,
  \item $\Delta\cup \{a,b\}$ for each hyperoval $\Delta\in\H_1$,
  \item $\Delta\cup \{b,c\}$ for each hyperoval $\Delta\in\H_2$,
  \item $\Delta\cup \{c,a\}$ for each hyperoval $\Delta\in\H_3$,
  \item $\Phi\cup \{c\}$ for each Fano plane $\Phi\in\mathcal{F}_1$,
  \item $\Phi\cup \{a\}$ for each Fano plane $\Phi\in\mathcal{F}_2$,
  \item $\Phi\cup \{b\}$ for each Fano plane $\Phi\in\mathcal{F}_3$,
  \item the symmetric difference $\ell_1+\ell_2$ for each pair of distinct lines of $\pg_2(4)$.
 \end{enumerate}
 We define the following codes:
 \begin{enumerate}
  \item For $S=\{a,b,c\}$, and hence $m=24$, we denote the code spanned by the blocks of $W_{24}$ by $\G_{24}$, which is self-dual.
  \item For $S=\{a,b\}$, and hence $m=23$, we denote the punctured code of $\G_{24}$ by $\G_{23}$ and its dual, the shortened code of $\G_{24}$, by $\G_{23}^\perp$. 
  \item For $S=\{a\}$, and hence $m=22$, we denote the punctured code of $\G_{23}$ by $\G_{22}$, the shortened code of $\G_{23}$ by $\SG_{22}$ and the even weight subcode of $\G_{22}$ by $\EG_{22}$.
 \end{enumerate}
\end{example}

\begin{remark}\label{dixonandmorterrors}
 Note that there is a typographical error on page 204 of \cite{dixon1996permutation} in the list of the different types of blocks in $W_{24}$ (v) and (vii): $\mathcal{F}_1$ and $\mathcal{F}_3$ should be interchanged. It is not difficult to see that this is required by the condition that if $\Delta\in\H_i$ and $\Phi\in\mathcal{F}_j$ then $|\Delta\cap\Phi|\leq 3$ if and only if $i\neq j$. Also, \cite[Theorem~6.5B (iv)]{dixon1996permutation} claims that $\PGaL_3(4)$ induces a cyclic permutation on the set $\{\H_1,\H_2,\H_3\}$ of $\PSL_3(4)$-orbits of hyperovals. In particular, the proof of \cite[Theorem~6.5B (iv)]{dixon1996permutation} claims without justification that the quotient $\PGaL_3(4)/\PSL_3(4)$ is cyclic of order $6$. Note that this is not true; indeed we show in Lemma~\ref{symdiffhyperovals} that $\PGaL_3(4)/\PSL_3(4)$ is isomorphic to $\s_3$.
\end{remark}

\subsection{Distance-regular graphs}\label{distreggraphsect}

The standard method (see \cite[Chapter~11]{brouwer}) of constructing a distance-regular graph from completely regular code $C$ in a distance-regular graph $\varGamma$ is to use $C$ (in some way) to determine a \emph{completely regular} partition $\Pi$ of the vertex set of $\varGamma$. In particular, each element of $\Pi$ must be a completely regular code with the same \emph{intersection numbers} as $C$. Given such a distance-regular partition $\Pi$ of $\varGamma$ we have that, by \cite[Theorem~11.1.6]{brouwer}, the quotient graph $\varGamma/\Pi$ is distance-regular. 

Suppose $C$ is a linear completely regular code in a Hamming graph $H(m,q)$ with a group $T$ of translations acting transitively on the vertex set of $H(m,q)$. Then the group $T_C$ of all translations by elements of $C$ is a normal subgroup of $T$ and hence the set of orbits of $T_C$ form a system of imprimitivity $\Pi$ for the action of $T$ on the vertex set of $H(m,q)$ and, by \cite[Theorem~11.1.6]{brouwer}, $\Pi$ is completely regular. The graph $H(m,q)/\Pi$ constructed from $C$ in this manner is called the \emph{coset graph} of $C$. Hence, all of the linear codes from Table~\ref{binaryCTtable} give rise to (known) distance-regular coset graphs in this manner. A reference for each is given in the relevant line of Table~\ref{binaryCTtable}. Note that $\G_{23}^\perp$ and $\E_{22}$ are the even weight subcodes of $\G_{23}$ and $\G_{22}$, respectively, so that their coset graphs are the bipartite doubles of the coset graphs of  $\G_{23}$ and $\G_{22}$.

The question of whether a given non-linear completely regular code can be used to construct a distance-regular graph is more complicated. By \cite[Proposition~11.1.5~(i)]{brouwer}, each part of a completely regular partition $\Pi$ involving a completely regular code $C$ must have the same size as $C$. Thus, such a construction requires that the number of vertices of $H(m,q)$ is a multiple of $|C|$. Now, the codes $\H$, $\PH$ and $\langle\L,\Delta_1\rangle\cup \langle\L,\Delta_2\rangle$ each have size divisible by an odd prime, and hence these codes do not give rise to distance-regular graphs as quotients of $H(m,2)$. 

From Table~\ref{binaryCTtable}, this leaves the code $\NR$. The Preparata codes (see \cite{baker1983preparata}) of length $2^{2t}-1$ are an infinite family of non-linear completely regular codes \cite{SemZinZai71}, of which the Nordstrom--Robinson code $\NR$ is the smallest, given by setting $t=2$. It should be noted that although $\NR$ is the unique code when $t=2$, there are several non-equivalent Preparata codes for each $t\geq 3$. If there was a linear code with the same parameters as the Preparata code of length $2^{2t}-1$ then its coset graph would have intersection array $\{2^{2t}-1,2^{2t}-2,1;1,2,2^{2t}-1\}$. Note that such a graph is a $2^{2t-1}$-antipodal cover of the complete graph ${\rm K}_{2^{2t}}$. Whilst each Preparata code is non-linear, and hence does not necessarily lead to a coset graph construction, a family of distance regular graphs with the aforementioned intersection array is constructed in \cite{de1995family}. Moreover, the graphs in \cite{de1995family} can be used to reconstruct the Preparata codes.

\section{Distinct socles: \texorpdfstring{$\alt_7$}{Alt(7)} and the Nordstrom--Robinson code}\label{sect:a7}

In this section we assume that Hypothesis~\ref{hyp1} holds, and that the socles of $X/K$ and $X_{\text{max}}/K$ are distinct. As we shall see, this occurs in precisely one case: when $X_{\text{max}}/K\cong \alt_7$ and $C$ is equivalent to the Nordstrom--Robinson code.

\begin{lemma}\label{socXnotAm}
 Assume Hypothesis~\ref{hyp1}. Then $\soc(X/K)\neq\alt_m$.
\end{lemma}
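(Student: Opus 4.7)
The plan is to derive a contradiction with the dimension condition in Hypothesis~\ref{hyp1} by showing that $\soc(X/K)\cong\alt_m$ forces $\Cmax$ to be $\alt_m$-invariant as a subspace of $\F_2^m$, and then classifying such invariant subspaces to observe that none has dimension in the range $[2,m-2]$.

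First I would note that $X/K\cong X^M\leq\Sym(M)\cong\s_m$, so the hypothesis $\soc(X^M)\cong\alt_m$ forces $X^M\in\{\alt_m,\s_m\}$. (One checks that $m\geq 6$, since Proposition~\ref{xmaxlemma}(2) gives $\deltamax\geq 5$, and combining this with $\dim\Cmax\geq 2$ in the Singleton bound yields $4\leq|\Cmax|\leq 2^{m-4}$.) The central step is to establish that $X^M\leq\PermAut(\Cmax)$, i.e.\ every $\sigma\in X^M$ stabilises $\Cmax$ setwise as a subspace of $\F_2^m$. Since $K$ is the kernel of the action of $X$ on $M$, it is normal in $X$; writing $x=h\sigma\in X$ with $h\in B$, $\sigma\in L$, and using that $B$ is elementary abelian, a direct computation gives $x^{-1}t_cx=\sigma^{-1}t_c\sigma=t_{c^\sigma}$ for each $c\in\Cmax$. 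Normality of $K=T_{\Cmax}$ then forces $c^\sigma\in\Cmax$ for every such $c$, whence $\Cmax^\sigma=\Cmax$ and $\Cmax$ is $\alt_m$-invariant.

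It then remains to classify the $\alt_m$-invariant $\F_2$-subspaces of $\F_2^m$. Since $\alt_m$ (for $m\geq 3$) is transitive on the vectors of each fixed Hamming weight, any invariant subspace is a union of weight classes; a standard argument shows that any such nonzero proper subspace either equals $\langle\b 1\rangle$ or contains the even-weight subcode $\langle\b 1\rangle^\perp$, so the only $\alt_m$-invariant subspaces are $\{\b 0\}$, $\langle\b 1\rangle$, $\langle\b 1\rangle^\perp$ and $\F_2^m$, of dimensions $0$, $1$, $m-1$ and $m$. Since none of these satisfies $2\leq\dim(\Cmax)\leq m-2$, we obtain the required contradiction. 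The main point requiring care is the subspace-preservation step via normality of $K$; the classification of $\alt_m$-invariant subspaces is then routine.
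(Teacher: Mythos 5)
Your proposal is correct, but it proves the lemma by a genuinely different route from the paper. The paper's proof is a counting argument: it uses Theorem~\ref{binaryx2ntchar}(3) and \cite[Table~1]{minimal2nt} to see that $X_{\b 0}^M$ cannot contain $\alt_m$, invokes Wielandt's bound $|\s_m:X_{\b 0}^M|\geq \lfloor(m+1)/2\rfloor!$ for primitive groups not containing $\alt_m$, converts this via Proposition~\ref{xmaxlemma}(5) into a lower bound on $|C|/|C_{\text{max}}|$, and plays that off against the Singleton bound to get a factorial-versus-exponential contradiction. You instead observe that normality of $K=T_{C_{\text{max}}}$ in $X$ (Proposition~\ref{xmaxlemma}(3)) together with the identity $x^{-1}t_cx=t_{c^\sigma}$ makes $C_{\text{max}}$ an $X^M$-invariant, hence $\alt_m$-invariant, subspace of $\F_2^m$, and then classify the submodules of the $\F_2\alt_m$-permutation module as $\{\b 0\}$, $\langle\b 1\rangle$, $\langle\b 1\rangle^\perp$ and $\F_2^m$, none of which has dimension in $[2,m-2]$. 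Both arguments are sound. Yours is more self-contained and elementary: it avoids Wielandt's theorem, the external Table~1, and the Singleton bound entirely, and it is really an extension of the observation already implicit in Proposition~\ref{xmaxlemma}(4) (that $C_{\text{max}}$ is an $\F_2X_{\b 0}$-module) from $X_{\b 0}^M$ to all of $X^M$. The paper's approach has the mild advantage of staying within the quantitative $|C|/|C_{\text{max}}|$ framework it reuses in later sections. One small presentational point: you should cite Proposition~\ref{xmaxlemma}(3) explicitly for the identification $K=T_{C_{\text{max}}}$, since without it the normality argument would only give invariance of the possibly larger linear code whose translations form $K$.
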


\begin{proof}
 Suppose that $\soc(X/K)=\alt_m$, in which case we have that $X^M\cong X/K\cong \alt_m$ or $\s_m$. Hypothesis~\ref{hyp1} implies that $C$ falls under part 3 of Theorem~\ref{binaryx2ntchar}. Thus, it follows from \cite[Table~1]{minimal2nt} that $X_{\b 0}^M$ does not contain $\alt_m$. By Proposition~\ref{ihom}, we have that $X_{\b 0}^M$ is $2$-homogeneous, and hence also primitive. Thus, by \cite[Theorem~14.2]{wielandt2014finite}, we have that
 \[
  |\s_m:X_{\b 0}^M|\geq \left\lfloor\frac{m+1}{2}\right\rfloor!.
 \]
 Now $|\s_m:X_{\b 0}^M|\leq 2 |X^M:X_{\b 0}^M|$ and hence, by part 4 of Proposition~\ref{xmaxlemma}, we have that 
 \[
  \left\lfloor\frac{m+1}{2}\right\rfloor!\leq 2\frac{|C|}{|C_{\text{max}}|}.
 \]
 Since $C_{\text{max}}$ has dimension at least $2$, and thus size at least $4$, we deduce that
 \[
  \left\lfloor\frac{m+1}{2}\right\rfloor!\leq |C|/2.
 \]
 Since $\delta\geq 5$, the Singleton bound implies that $|C|\leq 2^{m-4}$. Combining the last two inequalities gives
 \[
  \left\lfloor\frac{m+1}{2}\right\rfloor!\leq 2^{m-5}.
 \]
 However, this does not hold for $m\geq 5$, and hence $\soc(X/K)\neq\alt_m$.
\end{proof}

\begin{lemma}\label{differentsocles}
 Assume Hypothesis~\ref{hyp1} holds, and suppose that $\soc(X_{\text{max}}/K)\neq \soc(X/K)$. Then $m=15$, $X_{\text{max}}/K\cong \alt_7$ and $X/K\cong\alt_8$.
\end{lemma}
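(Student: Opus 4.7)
The plan is to apply the classification of finite $2$-transitive permutation groups to compare $X_{\text{max}}/K = X_{\text{max}}^M$ and $X/K = X^M$. First, by Proposition~\ref{xmaxlemma} parts~1 and~3 we have $X_{\text{max}}^M = X_{\b 0}^M$, and by Proposition~\ref{ihom} this group is $2$-homogeneous on $M$; since $X_{\text{max}}^M \leq X^M$ the group $X^M$ is also $2$-homogeneous on $M$. Hypothesis~\ref{hyp1} gives $\dim(C_{\text{max}}) \geq 2$, and Proposition~\ref{xmaxlemma} part~2 gives that $C_{\text{max}}$ is $(X_{\text{max}},2)$-neighbour-transitive with $\delta_{\text{max}} \geq 5$, so Theorem~\ref{binaryx2ntchar} part~3 applies and forces $\soc(X_{\text{max}}^M)$ to be one of the groups appearing in Table~\ref{binarytable} or in one of the excluded families $\PSL_t(2^k)$ (with $(t,k) \neq (3,2)$), $\Sp_{2t}(2)$, or $\Z_2^t$. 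In every one of these cases $X_{\text{max}}^M$ is in fact $2$-transitive, and hence so is the overgroup $X^M$.

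Next, Lemma~\ref{socXnotAm} excludes $\soc(X^M) = \alt_m$. The problem thus reduces to enumerating, for each allowed socle $S = \soc(X_{\text{max}}^M)$, the $2$-transitive overgroups $H \leq \Sym(m)$ of $X_{\text{max}}^M$ with $\soc(H) \notin \{S,\alt_m\}$. Using the CFSG-based classification of finite $2$-transitive groups, this is a finite check: for each almost-simple $S$ in Table~\ref{binarytable} every $2$-transitive overgroup of $S$ in $\Sym(m)$ lies inside the corresponding automorphism group of $S$ (e.g.\ $\PGaL_3(4)$ for $S = \PSL_3(4)$; $\PGaL_2(r)$ for $S = \PSL_2(r)$; $\PGaU_3(r)$ for $S = \PSU_3(r)$; $\Aut(\Ree(r))$ for $S = \Ree(r)$; and $\Aut(\mg_n)$, $\Aut(\HS)$ or $\Aut(\Co_3)$ for the sporadic socles), and therefore has socle $S$. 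The same conclusion holds for the excluded families: any $2$-transitive overgroup of a group in one of these families in $\Sym(m)$ either preserves the socle $S$ or has socle $\alt_m$, the latter being forbidden. In particular, for the affine case $\Z_2^t$ any $2$-transitive overgroup containing the translation subgroup is either affine with the same socle $\Z_2^t$ or has socle $\alt_{2^t}$.

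The sole exception is the classical embedding $\alt_7 < \PSL_4(2) \cong \alt_8$ via the $2$-transitive action on the $15$ points of $\pg_3(2)$; see \cite[Theorem~6.5B]{dixon1996permutation}. In $\Sym(15)$ the $2$-transitive overgroups of $\alt_7$ are $\alt_7$, $\s_7$, $\alt_8$, $\alt_{15}$ and $\s_{15}$. Of these only $\alt_8$ has socle that differs from $\alt_7$ and from $\alt_m = \alt_{15}$. Moreover, $\s_7$ is not a subgroup of the simple group $\alt_8$ (a simple order argument: an index-$4$ subgroup of $\alt_8$ would yield a faithful action of $\alt_8$ on $4$ points, contradicting $|\alt_8| > |\s_4|$), so the inclusion $X_{\text{max}}^M \leq X^M = \alt_8$ forces $X_{\text{max}}^M = \alt_7$. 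Therefore $m = 15$, $X_{\text{max}}/K \cong \alt_7$, and $X/K \cong \alt_8$, as claimed. The main technical point is the case-by-case enumeration of $2$-transitive overgroups, but this is a routine finite check against standard CFSG-based tables.
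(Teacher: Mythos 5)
Your overall strategy (reduce to $2$-transitivity of $X^M$, exclude $\alt_m$ via Lemma~\ref{socXnotAm}, then classify inclusions of $2$-transitive groups with distinct socles) is the same as the paper's, which delegates the classification step to \cite[Proposition~4.4 and Table~3]{ef2nt}. However, your central claim --- that the only relevant inclusion is $\alt_7<\alt_8$ on $15$ points because ``every $2$-transitive overgroup of $S$ lies inside the corresponding automorphism group of $S$'' --- is false, and this is a genuine gap. The inclusion $\PSL_2(23)<\mg_{24}$ of $2$-transitive groups of degree $24$ is a standard exception (and $\PSL_2(r)$ with $r=23$ sits squarely in Table~\ref{binarytable}), as is the inclusion $\Z_{23}\rtimes\Z_{11}<\mg_{23}$ of degree $23$ (here the subgroup is $2$-homogeneous but not $2$-transitive, which also undercuts your blanket assertion that $X_{\text{max}}^M$ is $2$-transitive in every case). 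Both pairs survive all the filters you impose: the overgroup's socle is neither $\soc(X_{\text{max}}^M)$ nor $\alt_m$. The paper's table lookup accordingly produces \emph{three} candidates, not one.

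Consequently your proof is missing the argument that eliminates the $m=23$ and $m=24$ cases. The paper does this by a counting argument: by Proposition~\ref{xmaxlemma} part 5, $|C|=|X^M:X_{\b 0}^M|\cdot|C_{\text{max}}|$, and in both cases the index $|\mg_{23}:\Z_{23}\rtimes\Z_{11}|=|\mg_{24}:\PSL_2(23)|=40320\geq 2^{14}$, while $|C_{\text{max}}|\geq 2^{11}$ (resp.\ $2^{12}$) by \cite[Table~1]{minimal2nt}; this forces $|C|>2^m$, a contradiction. Without recognising these candidates and supplying such an elimination, the conclusion $m=15$, $X_{\text{max}}/K\cong\alt_7$, $X/K\cong\alt_8$ does not follow. (Your treatment of the surviving $\alt_7<\alt_8$ case, including the observation that $\s_7\not\leq\alt_8$, is fine, as is the affine discussion, but these are not where the difficulty lies.)
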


\begin{proof}
 By Proposition~\ref{ihom}, $X_{\b 0}^M$ is $2$-homogeneous on $M$ and, by Proposition~\ref{xmaxlemma}, we have that $X_{\b 0}^M\cong X_{\text{max}}/K$. By \cite{kantor1972k}, any $2$-homogeneous but not $2$-transitive group of degree $m$ is a subgroup of $\AGaL_1(m)$, that is, has socle the additive group $\F_m^+$ of the field $\F_m$. If $X_{\b 0}^M$ is $2$-homogeneous but not $2$-transitive then, since $\soc(X/K)\neq \soc(X_{\text{max}}/K)$, it follows that the socle of $X/K\cong X^M$ is not $\F_m^+$, and hence that $X$ acts $2$-transitively on $M$. Hypothesis~\ref{hyp1} requires that $C_{\text{max}}$ has dimension at least $2$, and hence Theorem~\ref{binaryx2ntchar} implies that $\soc(X_{\b 0})$ is as in one of the lines of \cite[Table~1]{minimal2nt}. By Lemma~\ref{socXnotAm}, we have that $\soc(X/K)\neq \alt_m$. Thus, we may apply \cite[Proposition~4.4]{ef2nt}, in which case $X_{\b 0}$ is one of the groups $G$ in \cite[Table~3]{ef2nt}. Those groups $G=X_{\b 0}$ such that $\soc(X_{\b 0})$ appears in \cite[Table~1]{minimal2nt} and $G$ appears in \cite[Table~3]{ef2nt} (thus giving the corresponding $H\cong X/K$) leave us with the possibility that either:
 \begin{enumerate}
  \item $m=15$, $X_{\b 0}\cong\alt_7$ and $X/K\cong \alt_8$; 
  \item $m=23$, $X_{\b 0}\cong\Z_{23}\rtimes \Z_{11}$ and $X/K\cong \mg_{23}$; or,
  \item $m=24$, $X_{\b 0}\cong\PSL_2(23)$ and $X/K\cong \mg_{24}$.
 \end{enumerate}
 Suppose $m=23$. Then by part 4 of Proposition~\ref{xmaxlemma},
 \[
  |C|=|X^M:X_{\b 0}^M|\cdot |C_{\text{max}}|=2^7\cdot 3^2\cdot 5\cdot 7\cdot|C_{\text{max}}|.
 \]
 In particular, $|C|\geq 2^{14} |C_{\text{max}}|$. By \cite[Table~1]{minimal2nt}, $|C_{\text{max}}|\geq 2^{11}$ so that $|C|\geq 2^{25}$. However, this is greater than the total number of vertices in $H(23,2)$, giving a contradiction. Suppose $m=24$. Then by part 4 of Proposition~\ref{xmaxlemma},
 \[
  |C|=2^7\cdot 3^2\cdot 5\cdot 7\cdot|C_{\text{max}}|.
 \]
 In particular, $|C|\geq 2^{14} |C_{\text{max}}|$. By \cite[Table~1]{minimal2nt}, $|C_{\text{max}}|\geq 2^{12}$ so that $|C|\geq 2^{26}$. However, this is greater than the number of vertices in $H(24,2)$, giving a contradiction. Thus $m=15$, $X_{\text{max}}^M\cong \alt_7$ and $X^M\cong\alt_8$.
\end{proof}

This brings us to the first of our classification results.

\begin{proposition}\label{a7fifteen}
 Suppose Hypothesis~\ref{hyp1} holds, $m=15$ and $X_{\text{max}}/K\cong \alt_7$. Then $C$ is the Nordstrom--Robinson code $\NR$ in $H(15,2)$ and $\delta=5$.
\end{proposition}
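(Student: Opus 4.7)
The plan is to determine $C$ in three stages: count the cosets of $\Cmax$ in $C$, identify $\Cmax$ as a specific linear code, and then pin down $C$.

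First, by Lemma~\ref{differentsocles}, the hypotheses $m=15$ and $\Xmax/K \cong \alt_7$ force $X/K \cong \alt_8$. Proposition~\ref{xmaxlemma}(5) then yields
\[
 \frac{|C|}{|\Cmax|} \;=\; \frac{|X^M|}{|X_{\b 0}^M|} \;=\; \frac{|\alt_8|}{|\alt_7|} \;=\; 8,
\]
so $C$ is a union of exactly eight cosets of $\Cmax$. The action of $X_{\b 0}^M \cong \alt_7$ on $M$ is the exotic $2$-transitive action coming from the embedding $\alt_7 \hookrightarrow \PGL_4(2) \cong \alt_8$; this preserves the line-structure of $\pg_3(2)$ and so is not $3$-homogeneous, whence Lemma~\ref{upboundmindist} gives $\delta \leq 6$, so $\delta \in \{5, 6\}$.

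Next, I would identify $\Cmax$. By Proposition~\ref{xmaxlemma}(2) and~(4), $\Cmax$ is a linear $(\Xmax, 2)$-neighbour-transitive code with $\deltamax \geq 5$ and $\dim(\Cmax) \geq 2$, and is an $\F_2\alt_7$-submodule of $\F_2^{15}$ for the exotic $2$-transitive action. One can analyse the $\F_2\alt_7$-submodule lattice of $\F_2^{15}$ using the $\PGL_4(2)$-submodule chain
\[
 0 \;<\; \langle \b 1 \rangle \;<\; \RM(1,4)^* \;<\; (\RM(1,4)^*)^\perp \;<\; \langle \b 1 \rangle^\perp \;<\; \F_2^{15}
\]
together with the maximality of $\alt_7$ in $\alt_8$, and conclude that the only submodule of dimension $\geq 2$ with minimum distance $\geq 5$ is the punctured Reed--Muller code $\RM(1,4)^*$, of parameters $[15, 5, 7]$. (Equivalently, this is the code appearing in the $\alt_7$ line of \cite[Table~1]{minimal2nt}.) Hence $\Cmax$ is equivalent to $\RM(1,4)^*$, giving $|\Cmax| = 32$ and $|C| = 256$.

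Finally, I would show $C$ is equivalent to $\NR$. The code $C$ is a union of $\Cmax$ and seven further cosets of $\Cmax$, on which $X/\Xmax$ acts via the natural degree-$8$ action of $\alt_8$ on $\alt_8/\alt_7$; so the seven nonzero cosets form a single $X$-orbit in $\F_2^{15}/\RM(1,4)^*$. By Example~\ref{codesNR}, $\NR$ is the union of eight cosets of $\RM(1,4)^*$ contained in $\RM(2,4)^*$, with minimum distance $5$ and $X$-orbit structure as above. A short analysis of $\alt_8$-orbits on cosets of $\RM(1,4)^*$ (or an appeal to the uniqueness of the Nordstrom--Robinson code, cf.~\cite{Semakov1969perfect}) shows that $\NR$ is the unique such orbit yielding minimum distance at least $5$. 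Therefore $C$ is equivalent to $\NR$, with parameters $(15, 256, 5; 3)$, and in particular $\delta = 5$.

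The main obstacle is the identification of $\Cmax$ in the second step: this requires either an explicit $\F_2\alt_7$-submodule analysis of $\F_2^{15}$, or a careful extraction of the $\alt_7$ case from the classification underlying \cite[Table~1]{minimal2nt}. Once $\Cmax = \RM(1,4)^*$ is pinned down, the identification of $C$ with $\NR$ is relatively short, resting on the $\alt_8$-orbit structure on cosets of $\RM(1,4)^*$ together with the minimum-distance constraint.
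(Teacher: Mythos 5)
Your overall strategy (identify $\Cmax$, count its cosets in $C$, then recognise $C$ as $\NR$) is plausible, but two of its load-bearing steps fail as written. First, Lemma~\ref{differentsocles} is invoked in the wrong direction: that lemma \emph{assumes} $\soc(\Xmax/K)\neq\soc(X/K)$ and concludes $m=15$, $\Xmax/K\cong\alt_7$, $X/K\cong\alt_8$; it does not assert the converse. Proposition~\ref{a7fifteen} does not assume distinct socles (indeed, in the proof of Theorem~\ref{binaryCTclass} it is used to dispose of the whole case $\soc(\Xmax/K)\cong\alt_7$), so you must also treat the possibility $X/K\cong\alt_7$, in which case Proposition~\ref{xmaxlemma}(5) gives $C=\Cmax$ linear and your count $|C|/|\Cmax|=8$ is simply false. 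One can salvage the dichotomy $X^M\cong\alt_7$ or $\alt_8$ from Lemma~\ref{socXnotAm} and the subgroup structure of $\s_{15}$, but the linear case then needs its own argument (for instance, that the full automorphism group of each candidate linear code induces $\alt_8$ on $M$, contradicting $\Xmax/K\cong\alt_7$).

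Second, the identification of $\Cmax$ is not correct as claimed: $\RM(1,4)^*$ is \emph{not} the only $\F_2\alt_7$-submodule of $\F_2^{15}$ of dimension at least $2$ and minimum distance at least $5$. Its $4$-dimensional irreducible submodule, the $[15,4,8]$ simplex (dual Hamming) code --- which is precisely the minimal module of minimum distance $8$ that the paper's own proof extracts from \cite[Table~1, line~4]{minimal2nt} --- also qualifies, and it does not contain $\b 1$, so your displayed ``chain'' is not even the full $\GL_4(2)$-submodule lattice (nor is $\RM(1,4)^*$ self-orthogonal, so $\RM(1,4)^*\not\leq(\RM(1,4)^*)^\perp$). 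Hence the case $\Cmax=[15,4,8]$, $|C|=128$, is not excluded. Finally, even granting $|C|=256$, your last step establishes only $\delta\in\{5,6\}$, whereas the uniqueness results for $\NR$ you cite require $\delta=5$; you would still need to rule out $\delta=6$, e.g.\ because a binary code of length $15$ and minimum distance $6$ has at most $128$ codewords by \cite[Table~I]{Best78boundsfor}. For comparison, the paper's proof avoids all module-theoretic identification: it quotes the characterisation of $\NR$ in \cite[Theorem~1.1]{gillespie2012nord} when $\delta=5$, and when $\delta=6$ (the only alternative by Lemma~\ref{upboundmindist}) it derives a contradiction by combining $|C|\leq 128$ with the $3$-design divisibility conditions on the weight-$6$ and weight-$8$ codewords, which force $|C|\geq 91+65$.
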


\begin{proof}
 Suppose $\delta=5$. Then, by \cite[Theorem~1.1]{gillespie2012nord}, $C$ is equivalent to the Nordstrom--Robinson code. Thus $\delta\geq 6$. Now $\alt_7$ acts $2$-transitively, but not $3$-homogeneously, on $15$ points so that, by Lemma~\ref{upboundmindist}, $\delta= 6$. Thus, by \cite[Table~I]{Best78boundsfor}, $|C|\leq 128$. Also, by Theorem~\ref{binaryx2ntchar} and \cite[Table~1, line 4]{minimal2nt}, the minimal $(X,2)$-neighbour-transitive subcode of $C$ has minimum distance $8$, and hence $C$ contains codewords of weight $8$. By \cite[Theorem~2]{borges2019completely}, the set of all weight $k$ codewords form a $3$-design, so that, for some integers $\lambda_6,\lambda_8\geq 1$, we have:
 \[
  |C\cap \varGamma_6({\b 0})|=\frac{7\cdot 13}{4}\lambda_6\quad \text{and}\quad |C\cap \varGamma_8({\b 0})|=\frac{5\cdot 13}{8}\lambda_8.
 \]
 Since each of these must be integers, we have that $4$ divides $\lambda_6$ and $8$ divides $\lambda_8$. Hence $|C\cap \varGamma_6({\b 0})|\geq 91$ and $|C\cap \varGamma_8({\b 0})|\geq 65$. However, this implies that $|C|\geq 91+65$, contradicting $|C|\leq 128$.
\end{proof}

\section{The Mathieu Groups}\label{sect:mathgroups}

Throughout this section we assume that Hypothesis~\ref{hyp2} holds. In particular, now $\soc(X/K)=\soc(X_{\text{max}}/K)$. Moreover, we assume that $\soc(X/K)\cong\mg_{m}$, where $m=22$, $23$ or $24$. For the constructions of the codes of this section, and an explanation of the notation used, see Example~\ref{codesMathgps}.

We now determine the possibilities for $C_{\text{max}}$, which must be a submodule of the permutation module over $\F_2$ for $\mg_m$ and, since $2\leq\dim(C_{\text{max}})\leq m-2$, not equal to either $\langle {\b 1}\rangle$ or $\langle {\b 1}\rangle^\perp$. For $m=23$ and $24$ this is covered in the discussion at the beginning of Section~8 of \cite{Ivanov1993}, with only partial information given there for $m=22$. Code parameters can be found in \cite[Section 5.1]{borges2019completely}. If $m=24$ then $C_{\text{max}}$ is the extended binary Golay code $\G_{24}$ with parameters $[24,12,8;4]$. If $m=23$, then $C_{\text{max}}$ contains the dual code $\G_{23}^\perp$ of the binary Golay code $\G_{23}$. Hence $C_{\text{max}}$ is either $\G_{23}$, which has parameters $[23,12,7;3]$, or $\G_{23}^\perp$, which has parameters $[23,11,8;7]$. Note that if $M$ is then taken to be the points of $\pg_2(4)$ with $a$ and $b$ (see Example~\ref{codesMathgps}) adjoined then $\G_{23}$ may be obtained as the linear span of the blocks of $W_{24}$ containing the point $c$, and $\G_{23}^\perp$ may be obtained as the linear span of the blocks not containing $c$. 

Let $m=22$ and $\G_{22}$ denote the punctured binary Golay code. Then $C_{\text{max}}$ contains $\G_{22}^\perp$, which has parameters $[22,10,8;7]$, and is contained in $\G_{22}$, which has parameters $[22,12,6;3]$. Letting $M$ be the points of $\pg_2(4)$ with $a$ adjoined  (see Example~\ref{codesMathgps}), the linear span of the set of those blocks of $W_{24}$ containing both $b$ and $c$ is $\G_{22}$, and the linear span of the set of blocks containing neither $b$ nor $c$ is $\G_{22}^\perp$. The linear span of the set of blocks containing neither $b$ nor $c$ along with the vector $(1,1,\ldots,1)$ gives the even weight subcode $\E_{22}$ of $\G_{22}$, which has parameters $[22,11,6;7]$. Two further (equivalent) codes, each with parameters $[22,11,7;6]$, are obtained by taking the linear span of either the blocks containing $b$ but not $c$, or the blocks containing $c$ but not $b$, one of which is the shortened code $\S_{22}$ of $\G_{23}$. Each of $\G_{22}$, $\G_{22}^\perp$ and $\E_{22}$ are invariant under $\Aut(\mg_{22})=\Aut(W_{24})_{\{b,c\}}=\mg_{22}:2\leq X_{\b 0}^M$, whilst the outer automorphism of $\mg_{22}=\Aut(W_{24})_{b,c}$ interchanges the two $[22,11,7;6]$ codes. These are all the codes for $m=22$, since there are three non-trivial cosets of $\G_{22}^\perp$ in $\G_{22}$. Thus, we have shown the following.

\begin{lemma}\label{mathieuCmax}
 Let $m=22,23$ or $24$, let $C$ be a linear code in $H(m,2)$ with $2\leq \dim(C)\leq m-2$, let $X=\Aut(C)$ and suppose $\soc(X/K)\cong\mg_m$. Then $C=C_{\text{max}}$ is equivalent to one of following codes in $H(m,2)$.
 \begin{itemize}
  \item The $[24,12,8;4]$ extended binary Golay code $\G_{24}$.
  \item The $[23,12,7;3]$ binary Golay code $\G_{23}$.
  \item The $[23,11,8;7]$ dual $\G_{23}^\perp$ of $\G_{23}$.
  \item The $[22,12,6;3]$ punctured binary Golay code $\G_{22}$.
  \item The $[22,11,6;7]$ even weight subcode $\E_{22}$ of $\G_{22}$.
  \item The $[22,11,7;6]$ shortened binary Golay code $\S_{22}$.
  \item The $[22,10,8;7]$ dual code $\G_{22}^\perp$.
 \end{itemize}
\end{lemma}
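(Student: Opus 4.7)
The equality $C=C_{\text{max}}$ is immediate from Definition~\ref{maximallineardef}: since $C$ is linear, $T_C\leq\Aut(C)$, so $C_{\text{max}}\supseteq C$ and hence $C=C_{\text{max}}$. Since $C$ is linear and stabilised by $X$, it is an $\F_2\mg_m$-submodule of the permutation module $V=\F_2^m$. Thus the lemma amounts to showing that the $\F_2\mg_m$-submodules of $V$ of dimension between $2$ and $m-2$ are, up to $\Sym(M)$-equivalence, exactly the codes listed.

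For $m=24$ and $m=23$ I would invoke the known $\F_2\mg_m$-submodule lattice of $V$, as described at the start of \cite[Section~8]{Ivanov1993}. For $m=24$ the lattice is the chain
\[
 \{\b 0\}\subset\langle\b 1\rangle\subset\G_{24}\subset\langle\b 1\rangle^\perp\subset V,
\]
with dimensions $0,1,12,23,24$, leaving $\G_{24}$ as the unique option in the dimension range. For $m=23$ the lattice contains the additional atom $\langle\b 1\rangle$ incomparable with $\G_{23}^\perp$, and $\G_{23}=\G_{23}^\perp+\langle\b 1\rangle$; the only surviving codes of dimension in $[2,21]$ are then $\G_{23}^\perp$ and $\G_{23}$.

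For $m=22$ the lattice is richer, and I would construct it directly from the block description of $W_{24}$ in Example~\ref{codesMathgps} (with $S=\{a\}$). The spans of the blocks of $W_{24}$ containing neither, or both, of $b$ and $c$ give $\F_2\mg_{22}$-submodules $\G_{22}^\perp$ (dimension $10$) and $\G_{22}$ (dimension $12$) respectively. The quotient $\G_{22}/\G_{22}^\perp$ is $2$-dimensional and pointwise fixed by $\mg_{22}$; its three nontrivial cosets yield three intermediate $\F_2\mg_{22}$-submodules of dimension $11$, namely $\E_{22}=\G_{22}^\perp+\langle\b 1\rangle$, the span $\S_{22}$ of blocks containing $c$ but not $b$, and the span of blocks containing $b$ but not $c$. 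The outer automorphism of $\mg_{22}$, realised in $\Sym(M)$ by a permutation swapping $b$ and $c$, identifies the last two, giving the four listed $\Sym(M)$-equivalence classes $\G_{22}^\perp,\E_{22},\S_{22},\G_{22}$.

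The main obstacle is verifying the bracketing statement for $m=22$: that no $\F_2\mg_{22}$-submodule of $V$ of dimension $d$ with $2\leq d\leq 9$ or $13\leq d\leq 20$ exists, so that every submodule in the relevant range lies in the interval $[\G_{22}^\perp,\G_{22}]$. I would establish this by analysing the composition factors of the $22$-point permutation module for $\mg_{22}$ over $\F_2$, showing that the only nontrivial composition factors are a pair of irreducible $10$-dimensional modules realised by $\G_{22}^\perp/\langle\b 1\cap\G_{22}^\perp\rangle$ and $\langle\b 1\rangle^\perp/\G_{22}$, or alternatively by citing the corresponding modular-representation-theoretic description of the $\mg_{22}$-permutation module.
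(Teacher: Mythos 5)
Your proposal takes essentially the same route as the paper: identify $C$ with an $\F_2\mg_m$-submodule of the permutation module, read off the lattice for $m=23,24$ from the start of Section~8 of Ivanov--Praeger, and for $m=22$ enumerate the interval between $\G_{22}^\perp$ and $\G_{22}$ via the three non-trivial cosets, with the outer automorphism of $\mg_{22}$ identifying the two $[22,11,7;6]$ codes. You are in fact more explicit than the paper about the one step that genuinely needs checking, namely that for $m=22$ every submodule of dimension between $2$ and $20$ lies in that interval. Two small corrections to your sketch of that check: the quotient $\langle\b 1\rangle^\perp/\G_{22}$ is not defined, because $\G_{22}$ contains odd-weight codewords and so is not contained in $\langle\b 1\rangle^\perp$; the second $10$-dimensional composition factor is realised by $V\varGamma/\G_{22}$ (or by $\langle\b 1\rangle^\perp/\E_{22}$). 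Also, the composition factors $1,10,10,1$ alone do not determine the submodule lattice; you additionally need that the socle of the permutation module is $\langle\b 1\rangle\oplus\G_{22}^\perp$ (and, dually, that $\G_{22}$ is the unique maximal submodule of $\langle\b 1\rangle^\perp$ containing $\G_{22}^\perp$ properly in the relevant range), which is what actually forces the bracketing.
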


In the remainder of this section we determine all the completely transitive codes that arise in this case, each of which is in fact a code from the previous lemma.  

\begin{lemma}\label{halfg23ct}
 Suppose $m=23$ or $24$ and Hypothesis~\ref{hyp2} holds with $\soc(X/K)\cong\mg_m$. Then $C$ is equivalent to one of following completely transitive codes in $H(m,2)$:
 \begin{itemize}
  \item The $[24,12,8;4]$ extended binary Golay code $\G_{24}$.
  \item The $[23,12,7;3]$ binary Golay code $\G_{23}$.
  \item The $[23,12,8;7]$ dual code $\G_{23}^\perp$.
 \end{itemize}
\end{lemma}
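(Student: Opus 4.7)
The plan is to identify $C_{\text{max}}$, show that $C=C_{\text{max}}$, and then verify complete transitivity of each of the three resulting codes.

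By Proposition~\ref{xmaxlemma}(2), $C_{\text{max}}$ is $(X_{\text{max}},2)$-neighbour-transitive with $\delta_{\text{max}}\geq 5$, and Hypothesis~\ref{hyp2} gives $\soc(X_{\text{max}}/K)\cong \mg_m$. Applying Lemma~\ref{mathieuCmax} identifies $C_{\text{max}}$ as $\G_{24}$ when $m=24$ and as either $\G_{23}$ or $\G_{23}^\perp$ when $m=23$.

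Next, by Proposition~\ref{xmaxlemma}(5), $|C|/|C_{\text{max}}|=|X^M|/|X_{\b 0}^M|$, and both $X^M$ and $X_{\b 0}^M$ are subgroups of $\s_m$ with socle $\mg_m$ by Hypothesis~\ref{hyp2}. For $m\in\{23,24\}$ the Mathieu group $\mg_m$ is maximal in $\s_m$, and its only proper overgroups $\alt_m$ and $\s_m$ have socle $\alt_m$. Hence $X^M=X_{\b 0}^M=\mg_m$, which forces $|C|=|C_{\text{max}}|$ and so $C=C_{\text{max}}$.

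Complete transitivity of $\G_{24}$ and $\G_{23}$ is classical; see \cite[Theorems~11.3.2 and~11.3.4]{brouwer}. For $\G_{23}^\perp$, since $\Aut(\G_{23}^\perp)=T_{\G_{23}^\perp}\rtimes \mg_{23}$, it suffices to show that $\mg_{23}$ acts transitively on the set of cosets of $\G_{23}^\perp$ at each distance $i\in\{0,\ldots,7\}$. Using the weight enumerator $1+506x^8+1288x^{12}+253x^{16}$ of $\G_{23}^\perp$ together with the fact that its weight-$8$ codewords form the $506$ blocks of $S(5,8,24)$ avoiding a fixed point (a single $\mg_{23}$-orbit), a direct coset-representative count gives the distance-class sizes $(N_0,\ldots,N_7)=(1,23,253,1771,1771,253,23,1)$. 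For $i\leq 3$ each coset at distance $i$ has a unique representative as an $i$-subset of $M$, so the $4$-transitivity of $\mg_{23}$ yields the required orbit transitivity; at $i=4$ cosets correspond to equivalence classes of $4$-subsets paired into disjoint quadruples by the weight-$8$ codewords, and $4$-transitivity together with $\mg_{23}$-invariance of the weight-$8$ codeword set again suffices. The bijection $c+\G_{23}^\perp\mapsto c+\b 1+\G_{23}^\perp$ commutes with $\mg_{23}$ and interchanges the classes at distances $i$ and $7-i$, handling $i=5,6,7$.

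The main obstacle is the coset-class analysis for $\G_{23}^\perp$: correctly determining the $N_i$ and verifying that each such class is a single $\mg_{23}$-orbit, especially at $i=4$ where minimum-weight representatives are no longer unique and the orbit argument requires invoking the action of $\mg_{23}$ on the partitions of weight-$8$ codewords into pairs of disjoint $4$-subsets.
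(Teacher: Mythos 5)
Your reduction to $C=C_{\text{max}}$ and the identification of the three candidate codes via Lemma~\ref{mathieuCmax} follow the paper exactly (the paper phrases the equality $X^M=X_{\b 0}^M$ via $\Aut(\mg_m)\cong\mg_m$; note that $\mg_m$ is maximal in $\alt_m$, not in $\s_m$, but your conclusion still holds because $X^M$ normalises its socle and $N_{\s_m}(\mg_m)=\mg_m$). Where you genuinely diverge is the verification that $\G_{23}^\perp$ is completely transitive. The paper argues geometrically: after reducing to the action of $\mg_{23}$ on $C_i\cap\varGamma_i({\b 0})$, it shows for $i=5,6,7$ that any such vertex containing the two adjoined points $a,b$ must have its remaining points collinear in $\pg_2(4)$ (otherwise a hyperoval codeword $\Delta\cup\{a,b\}$ would be too close), and then uses transitivity of $\PSL_3(4)$ on collinear $j$-sets. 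You instead enumerate the cosets by distance and exploit $4$-transitivity together with the complementation map $D\mapsto D+{\b 1}$. Your class sizes $(1,23,253,1771,1771,253,23,1)$ are correct and the strategy works, but the crux --- that $D\mapsto D+{\b 1}$ swaps the distance-$i$ and distance-$(7-i)$ classes --- is only asserted. The cleanest justification is that $\G_{23}=\G_{23}^\perp\cup({\b 1}+\G_{23}^\perp)$ and $\G_{23}$ is perfect, so every coset of $\G_{23}^\perp$ lies over a coset of $\G_{23}$ with a leader $v$ of weight $i\leq 3$, and the two $\G_{23}^\perp$-cosets over it are $v+\G_{23}^\perp$ (at distance $i$) and $v+{\b 1}+\G_{23}^\perp$ (at distance $7-i$, using that the weight-$7$ words of $\G_{23}$ form $S(4,7,23)$). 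Granting this, your $i=4$ case also follows for free from $i=3$ via the same map, making the pairing-into-disjoint-quadruples argument unnecessary. Each route has its merits: the paper's identifies explicit orbit representatives (collinear point sets), a device it reuses for $\S_{22}$ and $\E_{22}$; yours is purely coding-theoretic and leans on the perfectness of $\G_{23}$.
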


\begin{proof}
 Recall the notation of Example~\ref{codesMathgps}, in particular, $M\subseteq P\cup\{a,b,c\}$, where $P$ is the set of points of $\pg_2(4)$. Since we are assuming $\soc(X/K)=\soc(X_{\text{max}}/K)$ and we know that $\Aut(\mg_{m})\cong\mg_{m}$ for $m=23$ and $24$, we have, by Proposition~\ref{xmaxlemma} part 4, that $C=C_{\text{max}}$. Thus, by Lemma~\ref{mathieuCmax}, $C$ is either $\G_{24}$, $\G_{23}$ or $\G_{23}^\perp$. Both $\G_{24}$ and $\G_{23}$ are completely transitive, by \cite[Example on p.~199]{sole1990completely}.

 Let $C=\G_{23}^\perp$ which, as noted in the discussion preceding Lemma~\ref{mathieuCmax}, may be obtained as the linear span of the blocks of $W_{24}$ not containing $c$, where $M$ is taken to be the points of $\pg_2(4)$ with $a$ and $b$ adjoined. In particular, $\Delta\cup \{a,b\}$ is a codeword, for each hyperoval $\Delta\in\H_1$. Since $T_C$ acts transitively on $C$ and $C$ has covering radius $7$, it is enough to prove that $\mg_{23}$ acts transitively on $C_i\cap\varGamma_i({\b 0})$ for each $i=1,\ldots,\rho=7$. First, $\mg_{23}$ acts $4$-transitively on $M$, and thus transitively on $\varGamma_i({\b 0})$ for $i=1,2,3,4$.

 Consider a vertex $\alpha\in C_i\cap\varGamma_i({\b 0})$, where $5\leq i\leq 7$, identified with the subset of which it is a characteristic vector. Since $\mg_{23}$ acts $4$-transitively, and thus $2$-transitively, on $M$, we may assume that $a,b\in \alpha$. Suppose three points $p_1,p_2,p_3\in \alpha\setminus\{a,b\}$ form a triangle in $\pg_2(4)$. Then, by \cite[Theorem~6.6B]{dixon1996permutation}, there exists a hyperoval $\Delta\in\H_1$ such that $p_1,p_2,p_3\in\Delta$, and $\Delta\cup\{a,b\}\in C$. It follows that $|\alpha\cap(\Delta\cup\{a,b\})|\geq 5$, which implies that 
 \[
  d(\alpha,\Delta\cup\{a,b\})=|\alpha|+|\Delta\cup\{a,b\}|-2|\alpha\cap(\Delta\cup\{a,b\})|\leq i-2,
 \]
 contradicting $\alpha\in C_i$. Thus, all points in $\alpha\setminus\{a,b\}$ lie on a single line of $\pg_2(4)$. Now $\left(\mg_{23}\right)_{a,b}=\PSL_3(4)$ acts transitively on the set of $j$-subsets, where $j=3,4,5$, of collinear points in $\pg_2(4)$. Thus $\mg_{23}$ acts transitively on $C_i\cap\varGamma_i({\b 0})$ for $i=5,6,7$, completing the proof.
\end{proof}

 The main purpose of next result is to help in determining the complete submodule structure of the permutation module over $\F_2$ for the action of $\PSL_3(4)$ on $\pg_2(4)$. However, we include it in this section as it is required to complete the proof of Lemma~\ref{math22CT}. Note that, as mentioned in Remark~\ref{dixonandmorterrors}, the result below contradicts \cite[Theorem~6.5B (iv)]{dixon1996permutation}, which erroneously claims that $\PGaL_3(4)/\PSL_3(4)\cong \Z_6$.

\begin{lemma}\label{symdiffhyperovals}
 Let $\Delta_1$ and $\Delta_2$ be hyperovals of $\pg_2(4)$ such that $\Delta_1$ and $\Delta_2$ intersect in a triangle. Then the symmetric difference $\Delta_1+\Delta_2=\Delta_3$ is a hyperoval of $\pg_2(4)$ and each of $\Delta_1$, $\Delta_2$ and $\Delta_3$ lie in different orbits under $\PSL_3(4)$. Moreover, the action induced by $\PGaL_3(4)$ on the set $\{\H_1,\H_2,\H_3\}$ of $\PSL_3(4)$-orbits of hyperovals is that of $\s_3$ on three points.
\end{lemma}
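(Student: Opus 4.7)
The plan is to exploit the fact, recalled in Example~\ref{codesMathgps}, that the weight-$8$ codewords of $\G_{24}$ are precisely the blocks of $W_{24}=S(5,8,24)$, and that two distinct blocks of this Steiner system meet in $0$, $2$, or $4$ points (as follows from $\G_{24}$ having all weights divisible by $4$ and minimum distance $8$). Label so that $\H_1$, $\H_2$, $\H_3$ correspond to the pairs $\{a,b\}$, $\{b,c\}$, $\{a,c\}$, so that $\Delta\cup\{x_i,y_i\}$ is a block whenever $\Delta\in\H_i$. Writing $\Delta_1\in\H_i$ and $\Delta_2\in\H_j$ with associated blocks $B_1$, $B_2$, disjointness of $P$ and $\{a,b,c\}$ yields
\[
 |B_1\cap B_2|=|\Delta_1\cap\Delta_2|+|\{x_i,y_i\}\cap\{x_j,y_j\}|=3+|\{x_i,y_i\}\cap\{x_j,y_j\}|,
\]
and the constraint that this lies in $\{0,2,4\}$ forces $|\{x_i,y_i\}\cap\{x_j,y_j\}|=1$, whence $i\neq j$. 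Then $B_1+B_2$ has weight $8$ and is thus itself a block; its restriction to $\{a,b,c\}$ is the third pair $\{x_k,y_k\}$ with $\{i,j,k\}=\{1,2,3\}$, and consulting the classification of block types forces $B_1+B_2=\Delta_3\cup\{x_k,y_k\}$ with $\Delta_3\in\H_k$. This yields both that $\Delta_3$ is a hyperoval and that $\Delta_1,\Delta_2,\Delta_3$ lie in distinct $\PSL_3(4)$-orbits.

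For the second assertion I would work with the setwise stabiliser $\mg_{24,\{a,b,c\}}$ inside $\mg_{24}=\Aut(W_{24})$. By $3$-transitivity of $\mg_{24}$, this stabiliser induces $\Sym(\{a,b,c\})\cong\s_3$ on $\{a,b,c\}$, with pointwise kernel $\mg_{21}\cong\PSL_3(4)$. Restriction to $P$ gives a homomorphism $\mg_{24,\{a,b,c\}}\to\Aut(\pg_2(4))=\PGaL_3(4)$ carrying $\mg_{21}$ onto $\PSL_3(4)$, and hence an induced map $\pi\colon\s_3\to\PGaL_3(4)/\PSL_3(4)$. Composing $\pi$ with the natural action of $\PGaL_3(4)/\PSL_3(4)$ on $\{\H_1,\H_2,\H_3\}$, the image of $\sigma\in\Sym(\{a,b,c\})$ equals the permutation $\sigma$ induces on the pairs $\{\{a,b\},\{b,c\},\{a,c\}\}$ (by the block bookkeeping of the first half), which is the standard faithful $\s_3$-action on a $3$-set. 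Since $|\PGaL_3(4)/\PSL_3(4)|=6$, both arrows in the chain $\s_3\to\PGaL_3(4)/\PSL_3(4)\to\s_3$ must be isomorphisms, so $\PGaL_3(4)/\PSL_3(4)\cong\s_3$ and acts on $\{\H_1,\H_2,\H_3\}$ as $\s_3$.

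The main obstacle will be confirming that a lifted transposition $\tilde\sigma\in\mg_{24,\{a,b,c\}}$ inducing, say, $(a,b)$ on $\{a,b,c\}$ really interchanges $\H_2$ with $\H_3$ on the orbits of hyperovals. This follows by tracing how $\tilde\sigma$ permutes block types: it sends any block $\Delta\cup\{b,c\}$ with $\Delta\in\H_2$ to a block containing $\{a,c\}$ but not $b$, which by the classification in Example~\ref{codesMathgps} is necessarily of the form $\Delta'\cup\{a,c\}$ with $\Delta'\in\H_3$. The remaining steps are routine given the explicit description of $W_{24}$.
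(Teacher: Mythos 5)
Your proof is correct, but it takes a genuinely different route from the paper's. The paper argues entirely inside $\pg_2(4)$ with explicit coordinates: it writes down a specific pair of hyperovals, verifies by hand (conic-plus-nucleus and a quadrangle argument) that their symmetric difference is again a hyperoval, invokes \cite[Theorem~6.6B]{dixon1996permutation} to separate the three orbits, and then exhibits concrete elements --- a diagonal matrix of non-square determinant and the Frobenius map --- that generate a $3$-cycle and a transposition on $\{\H_1,\H_2,\H_3\}$. You instead work top-down from $W_{24}$: the octad intersection pattern $\{0,2,4\}$ forces $|\{x_i,y_i\}\cap\{x_j,y_j\}|=1$ and hence $i\neq j$, the weight-$8$ sum $B_1+B_2$ must again be an octad whose trace on $\{a,b,c\}$ is the third pair, and the $\s_3$ action is read off from the $3$-transitivity of $\mg_{24}$ together with the faithful embedding $\mg_{24,\{a,b,c\}}\hookrightarrow\PGaL_3(4)$ restricting $\mg_{21}$ onto $\PSL_3(4)$. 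Your argument is shorter and more conceptual, and the order count $|\PGaL_3(4)/\PSL_3(4)|=6$ closes it cleanly; it does, however, lean on the full block-type classification of Example~\ref{codesMathgps} (in particular that the octads meeting $\{a,b,c\}$ in a fixed pair have $P$-part exactly one whole $\PSL_3(4)$-orbit of hyperovals, and that the weight-$8$ words of $\G_{24}$ are precisely the octads). Since the errors the paper flags in \cite[Theorem~6.5B~(iv)]{dixon1996permutation} concern the Fano-plane labels and the erroneous claim that the quotient is cyclic of order $6$ --- not the hyperoval block types or the validity of the $W_{24}$ construction --- your argument is not circular, but it is worth saying so explicitly. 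What the paper's computation buys, and yours does not, is the explicit elements $\tau$ and the field automorphism, which are reused verbatim in Lemmas~\ref{sumoflineandFanoplane}, \ref{ptimes32nt} and \ref{nonlinearpsl34}; if your proof were adopted, those later arguments would still need such elements to be constructed.
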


\begin{proof}
 Let $\omega$ be a generator of the multiplicative group $\F_4^\times$, so that $\omega$ satisfies $\omega^2+\omega+1=0$, and identify $M$ with the set $\{(x,y,z)\mid x,y,z\in\F_4, (x,y,z)\neq (0,0,0)\}$ with the understanding that two triples $(x_1,x_2,x_3)$ and $(y_1,y_2,y_3)$ represent the same point of $M$ whenever there exists some $\lambda\in \F_4$ such that $x_i=\lambda y_i$ for all $i=1,2,3$. Moreover, let
 \begin{align*}
  &\Delta_1=\{(1,0,0),(0,1,0),(0,0,1),(1,1,1),(1,\omega,\omega^2),(1,\omega^2,\omega)\}\quad\text{and}\\
  &\Delta_2=\{(1,0,0),(0,1,0),(0,0,1),(1,1,\omega),(1,\omega,1),(\omega,1,1)\},
 \end{align*}
 so that
 \[
  \Delta_3=\{(1,1,1),(1,\omega,\omega^2),(1,\omega^2,\omega),(1,1,\omega),(1,\omega,1),(\omega,1,1)\}.
 \]
 Then $\Delta_1$ is indeed a hyperoval since it has the standard form 
 \[
  \{(1,t,t^2)\mid t\in\F_4\}\cup \{(0,1,0),(0,0,1)\},
 \]
 made up of a conic and its nucleus (see, for instance, \cite{hirschfeld1998projective}). Also, $\Delta_2$ is a hyperoval since it is the image of $\Delta_1$ under the diagonal matrix with non-zero entries $1,1,\omega$, corresponding to an element of $\PGL_3(4)\setminus \PSL_3(4)$. To see that $\Delta_3$ is a hyperoval, consider the quadrangle
 \[
  \Xi=\{(1,1,1),(1,1,\omega),(1,\omega,1),(\omega,1,1)\}.
 \]
 The line through $(1,1,1),(1,1,\omega)$ meets the line through $(1,\omega,1),(\omega,1,1)$ at the point $(1,1,0)$, the line through $(1,1,1),(1,\omega,1)$ meets the line through $(1,1,\omega),(\omega,1,1)$ at the point $(1,0,1)$, and the line through $(1,1,1),(\omega,1,1)$ meets the line through $(1,1,\omega),(1,\omega,1)$ at the point $(0,1,1)$. These three points, $(1,1,0),(1,0,1),(0,1,1)$ all lie on the same line, and the remaining two points on this line, namely $(1,\omega,\omega^2)$ and $(1,\omega^2,\omega)$ are the only points of $\pg_2(4)$ not on one of the six lines through a pair of points of $\Xi$, and thus are the remaining points on the (unique) hyperoval containing $\Xi$. Thus $\Delta_3$ is a hyperoval. Now, $\Delta_1$, $\Delta_2$ and $\Delta_3$ all pairwise intersect in a triangle, and hence, by \cite[Theorem~6.6B]{dixon1996permutation}, are all in different $\PSL_3(4)$-orbits.

 Let
 \[
  \Delta_4=
  \{(1,0,0),(0,1,0),(0,0,1),(1,1,\omega^2),(1,\omega^2,1),(\omega^2,1,1)\}.
 \]
 Then, again by \cite[Theorem~6.6B]{dixon1996permutation}, $\Delta_1,\Delta_2,\Delta_4$ are in different $\PSL_3(4)$-orbits. The diagonal matrix with non-zero entries $1,1,\omega$ then maps
 \[
  \Delta_1\mapsto\Delta_2\mapsto\Delta_4\mapsto\Delta_1.
 \]
 The Frobenius automorphism fixes $\Delta_1$ and interchanges $\Delta_2$ and $\Delta_4$, completing the proof.
\end{proof}

We are now in a position to prove the last result of this section.

\begin{lemma}\label{math22CT}
 Suppose $m=22$ and Hypothesis~\ref{hyp2} holds with $\soc(X/K)\cong\mg_{22}$. Then $C$ is equivalent to one of following completely transitive codes in $H(22,2)$.
 \begin{itemize}
  \item The $[22,12,6;3]$ punctured binary Golay code $\G_{22}$.
  \item The $[22,11,6;7]$ even weight subcode $\E_{22}$.
  \item The $[22,11,7;6]$ shortened binary Golay code $\S_{22}$.
 \end{itemize}
\end{lemma}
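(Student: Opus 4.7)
The plan breaks into three steps: reducing to the linear case where $C=\Cmax$ is one of four candidates from Lemma~\ref{mathieuCmax}, verifying complete transitivity for three of those candidates, and ruling out the fourth, $\G_{22}^\perp$. The main obstacle will be the last step.

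\textbf{Reduction to the linear case.} I would apply Proposition~\ref{xmaxlemma}(5), which gives $|C|/|\Cmax|=|X^M|/|X_{\b 0}^M|$. Under Hypothesis~\ref{hyp2}, both $X^M$ and $X_{\b 0}^M$ have socle $\mg_{22}$, and since $\Aut(\mg_{22})=\mg_{22}:2$ each of these groups equals either $\mg_{22}$ or $\mg_{22}:2$. Thus $n:=|C|/|\Cmax|\in\{1,2\}$. If $n=2$, Lemma~\ref{linearcompletion}(3) would force the codimension of $\Cmax$ in $\langle C\rangle$ to be both at least $2$ and at most $n-1=1$, a contradiction. Hence $C=\Cmax$, and by Lemma~\ref{mathieuCmax} the code $C$ is equivalent to one of $\G_{22}$, $\G_{22}^\perp$, $\E_{22}$, $\S_{22}$.

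\textbf{The three completely transitive cases.} Complete transitivity of $\G_{22}$ is classical (see \cite{sole1990completely}). For $\E_{22}$ and $\S_{22}$ I would argue directly, in the spirit of Lemma~\ref{halfg23ct}: the weight-$6$ codewords of $\E_{22}$ are precisely the $77$ blocks of $S(3,6,22)$ preserved by $\mg_{22}:2$, while $\S_{22}$ is the shortening of $\G_{23}$ at a coordinate with top group $\mg_{22}$ (the outer automorphism of $\mg_{22}:2$ swaps $\S_{22}$ with its twin). Since translation by codewords acts regularly on each coset, it suffices to show that the top group is transitive on the set of cosets at each fixed distance $i\leq\rho$. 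For $i\leq 3$ this is immediate from the $3$-transitivity of $\mg_{22}$ on $M_{22}$, as no two distinct weight-$\leq 3$ vertices share a coset; for $4\leq i\leq \rho$ the plan is to pick a canonical minimum-weight representative for each coset class and describe it explicitly in terms of Steiner-system data (subsets of blocks, symmetric differences of blocks, complements), then appeal to transitivity of $\mg_{22}$ or $\mg_{22}:2$ on those objects.

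\textbf{Ruling out $\G_{22}^\perp$.} This is the main obstacle. The plan is to exhibit an $\mg_{22}:2$-invariant of distance-$4$ cosets of $\G_{22}^\perp$ taking two distinct values. Under $\mg_{22}:2$ the $\binom{22}{4}=7315$ weight-$4$ subsets of $M_{22}$ split into two orbits: the $1155$ ``good'' $4$-subsets contained in some block of $S(3,6,22)$, and the $6160$ ``bad'' ones that are not. Since $\G_{22}^\perp$ has minimum distance $8$ and contains only even-weight vectors, every weight-$4$ vertex lies in a distance-$4$ coset. Two weight-$4$ vertices $v,v'$ lie in the same coset of $\G_{22}^\perp$ precisely when $v+v'$ is a weight-$8$ codeword of $\G_{22}^\perp$, equivalently when $v$ and $v'$ are disjoint and $v\cup v'$ is an octad of $S(5,8,24)$ avoiding the two distinguished points $\{i,j\}$ of Example~\ref{codesMathgps}. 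An inclusion-exclusion using $\lambda_4=5$ in $S(5,8,24)$ shows that a good $4$-subset is contained in exactly $4$ such octads and a bad one in exactly $3$. Consequently, a distance-$4$ coset containing a good weight-$4$ vertex contains $5$ weight-$4$ vertices, while one containing a bad vertex contains $4$; this count is $\mg_{22}:2$-invariant and distinguishes at least two orbits on distance-$4$ cosets. Since $T_{\G_{22}^\perp}$ acts trivially on cosets, $\Aut(\G_{22}^\perp)$ fails to act transitively on $(\G_{22}^\perp)_4$, so $\G_{22}^\perp$ is not completely transitive.
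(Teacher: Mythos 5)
Your reduction to $C=\Cmax$ and to the four candidates of Lemma~\ref{mathieuCmax} is exactly the paper's argument and is correct. The two remaining parts diverge, in opposite directions.

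\textbf{The gap.} For $\EG_{22}$ and $\SG_{22}$ you correctly reduce complete transitivity to transitivity of $X_{\b 0}$ on $C_i\cap\varGamma_i({\b 0})$, and handle $i\leq 3$ via $3$-transitivity of $\mg_{22}$. But for $4\leq i\leq\rho$ (that is, up to $i=7$ for $\EG_{22}$ and $i=6$ for $\SG_{22}$) you only state a plan --- ``pick a canonical minimum-weight representative \ldots\ then appeal to transitivity'' --- without identifying what those representatives are or why the group is transitive on them. This is the substantive content of the lemma and it is not routine. The paper's argument identifies $M$ with $\pg_2(4)\cup\{a\}$ and shows, by a distance computation against the weight-$6$ (resp.\ weight-$7$) codewords, that any $\alpha\in C_i\cap\varGamma_i({\b 0})$ with $a\in\alpha$ must have $\alpha\setminus\{a\}$ a set of collinear points (for $\SG_{22}$, because a triangle in $\alpha$ would lie in a hyperoval of $\H_1$ and force $d(\alpha,\Delta\cup\{a\})\leq i-1$), or an arc contained in a hyperoval of $\H_1\cup\H_3$ (for $\EG_{22}$). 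Transitivity then needs specific facts: $\PSL_3(4)$ is transitive on collinear $j$-sets for $j=3,4,5$; the field automorphism interchanges $\H_1$ and $\H_3$; and the stabiliser of a hyperoval in $\PSiL_3(4)$ induces $\s_6$ on its six points. None of this is visible from your sketch, so as written the proof of complete transitivity of $\EG_{22}$ and $\SG_{22}$ is incomplete.

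\textbf{A genuinely different (and valid) exclusion of $\G_{22}^\perp$.} The paper rules out $\G_{22}^\perp$ in one line by comparing its covering radius ($7$) with its external distance via \cite[Theorem~4.1]{sole1990completely}. Your argument instead exhibits an explicit $\Aut$-invariant separating two orbits on $(\G_{22}^\perp)_4$: since $\delta=8$, every weight-$4$ vertex lies in $(\G_{22}^\perp)_4$, two such vertices are in the same coset iff their union is an octad of $S(5,8,24)$ avoiding the two deleted points, and the inclusion--exclusion $5-1-1+\epsilon$ (with $\lambda_4=5$ and $\epsilon=1$ iff the $4$-set lies in a block of $S(3,6,22)$) gives cosets with $5$ or $4$ weight-$4$ vectors. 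This count is preserved by $\Aut(\G_{22}^\perp)$, since it equals $|D\cap\varGamma_4(c)|$ for any codeword $c$ and any coset $D\subseteq(\G_{22}^\perp)_4$, so transitivity on $(\G_{22}^\perp)_4$ fails. I checked the numerology ($1155/5+6160/4=1771$ cosets) and it is consistent. This is more work than the external-distance test but is self-contained, needs no computation of weight enumerators, and pinpoints exactly where regularity breaks (at distance $4$); it would equally show that $\G_{22}^\perp$ is not $4$-regular.
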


\begin{proof}
 Since $\mg_{22}$ has index $2$ in $\Aut(\mg_{22})$ we have, by part 4 of Proposition~\ref{xmaxlemma} and part 3 of Lemma~\ref{linearcompletion}, that $C=C_{\text{max}}$. The possibilities for $C_{\text{max}}$ are given in Lemma~\ref{mathieuCmax}. First, $\G_{22}^\perp$ has $11$ non-zero weights, and thus $C$ has external distance $11$, but covering radius $7$. By \cite[Theorem~4.1]{sole1990completely}, the covering radius of a completely regular code must equal its external distance, therefore $\G_{22}^\perp$ is not completely regular, and hence not completely transitive. Thus $C$ is $\G_{22}$, $\E_{22}$ or $\S_{22}$. 
 
 As each of the codes $\G_{22}$, $\E_{22}$ or $\S_{22}$ are linear, Proposition~\ref{xmaxlemma} implies that $X=T_C\rtimes X_{\b 0}$, and in each case $T_C\leq X$ acts transitively on $C$. Thus, to show complete transitivity, it suffices to show that $X_{\b 0}$ acts transitively on $C_i\cap \varGamma_i({\b 0})$ for $i=1,\ldots,\rho$. Moreover, $\mg_{22}\leq X_{\b 0}^M$ acts $3$-transitively on $M$, and thus $X_{\b 0}$ acts transitively on $\varGamma_i({\b 0})$, for each $i=1,2,3$. Since $\G_{22}$ has covering radius $\rho=3$, $\G_{22}$ is completely transitive. Recall that $M$ here is the set of all points of $\pg_2(4)$ with the extra point $a$ adjoined, as in Example~\ref{codesMathgps}.

 Let $C=\S_{22}$. Then $C$ contains the codeword $\Delta\cup \{a\}$ for each hyperoval $\Delta\in\H_1\cup\H_3$. Consider a vertex $\alpha\in C_i\cap\varGamma_i({\b 0})$, where $i=4,5$ or $6$, identified with the corresponding subset of $M$. Since $\mg_{22}$ acts transitively on $M$, we may assume that $a\in\alpha$. Suppose three points $p_1,p_2,p_3\in \alpha\setminus\{a\}$ form a triangle in $\pg_2(4)$. Then, by \cite[Theorem~6.6B]{dixon1996permutation}, there exists a hyperoval $\Delta\in\H_1$ such that $p_1,p_2,p_3\in\Delta$. It follows that $|\alpha\cap(\Delta\cup\{a\})|\geq 4$, which implies that 
 \[
  d(\alpha,\Delta\cup\{a\})=|\alpha|+|\Delta\cup\{a\}|-2|\alpha\cap(\Delta\cup\{a\})|\leq i-1,
 \]
 contradicting $\alpha\in C_i$. Thus, all points in $\alpha\setminus\{a\}$ lie on a single line of $\pg_2(4)$. Now $\left(\mg_{22}\right)_{a}=\PSL_3(4)$ acts transitively on the set of $j$-subsets, where $j=3,4,5$, of collinear points in $\pg_2(4)$. Thus $\mg_{22}$ acts transitively on $C_i\cap\varGamma_i({\b 0})$ for $i=4,5,6$, and $C$ is completely transitive.

 Let $C=\E_{22}$. Then, $C$ contains the codewords $\ell\cup \{a\}$ for each line $\ell$ of $\pg_2(4)$ as well as the codewords $\Delta$ for each hyperoval $\Delta\in\H_2$. Note that here $X_{\b 0}$ is isomorphic to $\mg_{22}:2=\Aut(\mg_{22})$ and $\PSiL_3(4)$ is isomorphic to the stabiliser $X_{{\b 0},a}$ of $a$ inside $X_{\b 0}^M$. In particular, $X_{{\b 0},a}$ is an index $3$ subgroup of the group $\PGaL_3(4)$ appearing in Lemma~\ref{symdiffhyperovals}. Consider a vertex $\alpha\in C_i\cap\varGamma_i({\b 0})$, where $i=4,5,6$ or $7$, identified with the corresponding subset of $M$. Since $X_{\b 0}$ acts transitively on $M$, we may assume that $a\in\alpha$. Suppose three distinct points $p_1,p_2,p_3\in \alpha\setminus\{a\}$ lie on a line $\ell$ in $\pg_2(4)$. Since $p_1,p_2,p_3,a\in \alpha\cap(\ell\cup\{a\})$, it follows that $|\alpha\cap(\ell\cup\{a\})|\geq 4$, which implies that
 \[
  d(\alpha,\ell\cup\{a\})=|\alpha|+|\ell\cup\{a\}|-2|\alpha\cap(\ell\cup\{a\})|\leq i-2,
 \]
 contradicting $\alpha\in C_i$. Thus, no subset of $3$ distinct points in $\alpha\setminus\{a\}$ are collinear in $\pg_2(4)$. It follows that $\alpha\setminus\{a\}$ is contained in some hyperoval $\Delta$ in $\H_1,\H_2$ or $\H_3$. Suppose $i=4$. The group $X_{{\b 0},a}\cong\PSiL_3(4)$ acts transitively on the set of triangles of $\pg_2(4)$, and hence $X_{\b 0}$ acts transitively on $C_4\cap\varGamma_4({\b 0})$. Suppose $i=5,6$ or $7$. If $\alpha\setminus\{a\}\subseteq\Delta\in\H_2$ then, since $\H_2\subseteq C$, we have that $d(\alpha,\Delta)\leq 3$, contradicting $\alpha\in C_i$. Hence, $\Delta\in\H_1$ or $\H_3$. By Lemma~\ref{symdiffhyperovals}, there exists an element (corresponding to the field automorphism in the proof of Lemma~\ref{symdiffhyperovals}) of $X_{{\b 0},a}$ that interchanges $\H_1$ and $\H_3$ (and also interchanges $b$ and $c$). Moreover, by \cite[Exercise~6.5.13]{dixon1996permutation}, the stabiliser of $\Delta$ inside this copy of $\PSiL_3(4)$ acts as the symmetric group $\s_6$ on the points of $\Delta$. Thus, we have that $X_{\b 0}$ acts transitively on $C_i\cap \varGamma_i({\b 0})$ for each $i=5,6,7$. Hence $C$ is completely transitive.
\end{proof}

\section{\texorpdfstring{$\PSL_3(4)$}{PSL(3,4)}}\label{sect:psl34}

In this section we shall consider codes satisfying Hypothesis~\ref{hyp2}, where $M$ is the point-set of the projective plane $\pg_2(4)$ and where $\soc(X/K)\cong\PSL_3(4)$. See Example~\ref{pslcodesex} for the constructions of the codes arising in this section.

\begin{lemma}\label{pslmaxminsubmodules}
 Assume Hypothesis~\ref{hyp2} holds, $m=21$ and $\soc(X/K)\cong\PSL_3(4)$. Then $C_{\text{max}}$ contains the code $\P$, which has parameters $[21,9,8;7]$, and is contained in the code $\P^\perp$, which has parameters $[21,12,5;3]$.
\end{lemma}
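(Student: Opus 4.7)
The plan is to analyse the $\PSL_3(4)$-submodule structure of the permutation module $V\varGamma=\F_2^{21}$, with $\PSL_3(4)$ acting on $M$ as the points of $\pg_2(4)$, and to bracket $C_{\text{max}}$ between $\P$ and $\P^\perp$ within that lattice. By Proposition~\ref{xmaxlemma} part 4 together with Hypothesis~\ref{hyp2}, the code $C_{\text{max}}$ is a $\PSL_3(4)$-invariant subspace of $V\varGamma$, and by Hypothesis~\ref{hyp1} its dimension lies between $2$ and $19$. Consequently $C_{\text{max}}^\perp$ is also a $\PSL_3(4)$-invariant subspace of dimension at least $2$, and since $C_{\text{max}}\subseteq\P^\perp$ is equivalent to $\P\subseteq C_{\text{max}}^\perp$, both containments in the lemma reduce to a single \emph{key claim}: every $\PSL_3(4)$-invariant subspace $N$ of $V\varGamma$ with $\dim N\geq 2$ contains $\P$.

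I would first establish the parameters of $\P$ and $\P^\perp$. The sum of the five lines of $\pg_2(4)$ through a point $p$ equals $\b 1$, since every other point lies on exactly one such line; hence $\b 1\in\L$. Because $\P$ is generated by weight-$16$ vectors it consists entirely of even-weight vectors, so $\b 1\notin\P$, giving $\L=\P+\langle\b 1\rangle$ and $\dim\L=\dim\P+1$. The classical $2$-rank of $\pg_2(4)$ is $\binom{3}{2}^2+1=10$, so $\dim\P=9$ and $\dim\P^\perp=12$. Every line lies in $\P^\perp$ since $\langle\ell,\bar{\ell'}\rangle = |\ell\setminus\ell'|\in\{0,4\}$, giving $\P^\perp$ minimum distance at most $5$; a short case check on subsets of size at most $4$ (classifying the possible values of $|\alpha\cap\ell|$ as $\ell$ ranges over lines) rules out smaller weights. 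The minimum distance of $\P$ equals $8$, attained by the weight-$8$ codewords $\ell_1+\ell_2=\bar{\ell_1}+\bar{\ell_2}$ for distinct lines $\ell_1,\ell_2$.

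To prove the key claim, I would argue via the socle. Since $\PSL_3(4)$ is transitive on $M$, the only $\PSL_3(4)$-fixed vectors in $V\varGamma$ are $\b 0$ and $\b 1$, so $\langle\b 1\rangle$ is the unique minimal non-zero $\PSL_3(4)$-submodule of $V\varGamma$, and in particular $\langle\b 1\rangle\subseteq N$. Next, I would show that there is no non-zero $\PSL_3(4)$-fixed element in $V\varGamma/\langle\b 1\rangle$: a lift $v$ of such an element satisfies $v^g-v\in\langle\b 1\rangle$ for all $g\in\PSL_3(4)$, and the map $g\mapsto v^g-v$ defines a group homomorphism $\PSL_3(4)\to\F_2$, which must be trivial since $\PSL_3(4)$ is simple. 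As the smallest non-trivial $\F_2\PSL_3(4)$-irreducible has dimension $8$, every non-zero $\PSL_3(4)$-submodule of $V\varGamma/\langle\b 1\rangle$ has dimension at least $8$. The quotient $\P/\langle\b 1\rangle$ is an $8$-dimensional irreducible $\F_2\PSL_3(4)$-submodule, and once we verify that it coincides with the socle of $V\varGamma/\langle\b 1\rangle$, every non-zero submodule of $V\varGamma/\langle\b 1\rangle$ contains $\P/\langle\b 1\rangle$; lifting gives $N\supseteq\P$.

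The principal obstacle is confirming that $\P/\langle\b 1\rangle$ is the unique minimal non-zero $\PSL_3(4)$-submodule of $V\varGamma/\langle\b 1\rangle$, rather than merely a summand of the socle. In characteristic $2$ the two $8$-dimensional irreducible $\F_2\PSL_3(4)$-modules form a Frobenius-conjugate pair, so a priori both could appear in the socle. This can be resolved either by invoking known results on the $\F_p$-permutation module of $\PSL_3(q)$ on $\pg_2(q)$ (e.g.\ via the classical $2$-rank analysis used to compute $\dim\L$, or via the work of Bardoe--Sin), or by a direct geometric argument: a second minimal submodule $N'\neq\P$ would, by Lemma~\ref{design} and the $2$-transitivity of $\PSL_3(4)$, give rise to a $2$-$(21,k,\lambda)$ design on $M$ whose parameters contradict the orbit structure of $\PSL_3(4)$ on subsets of $\pg_2(4)$.
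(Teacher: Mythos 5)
Your overall strategy---reduce both containments, via duality and the self-orthogonality of the setup, to the single claim that every $\PSL_3(4)$-invariant subspace of $V\varGamma$ of dimension at least $2$ contains $\P$---is essentially the paper's, which obtains that claim by citing the known submodule structure of the permutation module from Ivanov--Praeger. However, your execution of the module-theoretic step contains genuine errors. First, the assertion that $\langle\b 1\rangle$ is \emph{the} unique minimal non-zero submodule of $V\varGamma$, ``and in particular $\langle\b 1\rangle\subseteq N$,'' does not follow from the uniqueness of the fixed vector and is false: $\P$ itself is a minimal (irreducible) submodule, and since every codeword of $\P$ has even weight---as you yourself observe when computing $\dim\P$---we have $\b 1\notin\P$. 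Worse, natural candidates for $C_{\text{max}}$ such as $\P$ or $\langle\P,\Delta_1\rangle$ do not contain $\b 1$, so the passage to $N/\langle\b 1\rangle$ on which your whole argument rests is unavailable for exactly the submodules one needs to control. Second, ``$\P/\langle\b 1\rangle$'' is not defined (again because $\b 1\notin\P$), the image of $\P$ in $V\varGamma/\langle\b 1\rangle$ has dimension $9$, not $8$, and the composition factors of this permutation module have dimensions $9,1,1,1,9$: the $8$-dimensional modular irreducibles of $\PSL_3(4)$ in characteristic $2$ do not occur in it at all, so both your lower bound ``every non-zero submodule of the quotient has dimension at least $8$'' and your identification of the socle are aimed at the wrong irreducible. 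The correct picture is $\soc(V\varGamma)=\langle\b 1\rangle\oplus\P$ with $\P$ irreducible of dimension $9$; a submodule $N$ with $N\not\supseteq\P$ then has socle $\langle\b 1\rangle$, and one must still show that the unique minimal submodule of $V\varGamma/\langle\b 1\rangle$ is $\L/\langle\b 1\rangle\cong\P$ to force $\dim N=1$. You do flag this last uniqueness as the ``principal obstacle'' and propose to settle it by citation, which is in effect what the paper does, but as written your argument starts from a false premise and concerns the wrong pair of Frobenius-conjugate modules (the relevant pair is the two $9$-dimensional ones, only one of which lies in the socle, the other occurring as $V\varGamma/\P^\perp$).

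Two secondary gaps: the stated parameters include the covering radii $7$ and $3$, which you never address (the paper obtains them by identifying $\P$ and $\P^\perp$ with the shortened and punctured codes of $\G_{22}^\perp$ and $\G_{22}$); and you only prove $\delta(\P)\leq 8$ by exhibiting the words $\ell_1+\ell_2$, without excluding smaller weights. The dimension count via the $2$-rank of $\pg_2(4)$ and the verification that every line lies in $\P^\perp$ are correct.
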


\begin{proof}
 By \cite[Section~8]{Ivanov1993}, the unique minimal submodule of the permutation module over $\F_2$ for the action of $\PSL_3(4)$ on $\pg_2(4)$, which we identify here with the vertex set of $H(21,2)$, is $\P$, that is, the code generated by the set of all complements of lines in $\pg_2(4)$. Now, for any two distinct lines $\ell_1,\ell_2$ the sum of their complements is $(\ell_1+M)+(\ell_2+M)=\ell_1+\ell_2$. Thus, it follows that $\P$ contains the code generated by the symmetric difference $\ell_1+\ell_2$ of all pairs of lines $\ell_1,\ell_2$ of $\pg_2(4)$. Moreover, if $\ell_1,\ell_2,\ell_3,\ell_4$ are pairwise distinct lines in $\pg_2(4)$ all through a common point $p$, then $(\ell_1+\ell_2)+(\ell_3+\ell_4)=\ell_5+M$, where $\ell_5$ is the remaining line through $p$. As the complement of any line may be formed in this way, $\P$ is in fact generated by the set of all symmetric differences $\ell_1+\ell_2$ of pairs $\ell_1,\ell_2$ of lines of $\pg_2(4)$. It follows that $\P$ is generated by only those blocks appearing under case 8 of Example~\ref{codesMathgps} and therefore $\P$ is the shortened code of the $[22,10,8;7]$ code $\G_{22}^\perp$ (see Lemma~\ref{mathieuCmax}), and thus has parameters $[21,9,8;7]$. Since Hypothesis~\ref{hyp2} assumes that $C_{\text{max}}$ has codimension at least $2$ in $V\varGamma\cong\F_2^m$, it follows that $C_{\text{max}}$ is contained in $\P^\perp$, which is the punctured code of the $[22,12,6;3]$ code $\G_{22}$ (again, see Lemma~\ref{mathieuCmax}), and so $\P^\perp$ has parameters $[21,12,5;3]$.
\end{proof}

The following lemma demonstrates an important relationship between the cosets of $\P$ in $\P^\perp$. Note that the sum in the conclusion should be interpreted in terms of characteristic vectors and is equivalent to the symmetric difference of the subsets of points, and by a \emph{quadrangle} we mean a set of four points with the property that no $3$ are collinear.

\begin{lemma}\label{sumoflineandFanoplane}
 Let $\ell$ be a line of $\pg_2(4)$ and, for each $i=1,2,3$, let $\Delta_i$ be a hyperoval in $\H_i$ and $\Phi_i$ be a Fano subplane in $\mathcal{F}_i$, as in Example~\ref{pslcodesex}. Then $\ell$, $\Delta_i$ and $\Phi_i$ may be chosen so that $\Delta_i=\ell+\Phi_i$.
\end{lemma}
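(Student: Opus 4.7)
The plan is to show that whenever $\Phi$ is a Fano subplane of $\pg_2(4)$ and $\ell$ is a line of $\pg_2(4)$ that is also a line of $\Phi$ (equivalently, $|\ell\cap\Phi|=3$), the set $\Delta:=\ell+\Phi=(\ell\setminus\Phi)\cup(\Phi\setminus\ell)$ is a hyperoval that lies in $\H_i$ whenever $\Phi\in\mathcal{F}_i$. Assuming this, the lemma follows quickly: fix any line $\ell$ of $\pg_2(4)$, and for each $i\in\{1,2,3\}$ pick any $\Phi\in\mathcal{F}_i$ and any line $\ell_0$ of $\Phi$; an element of $\PSL_3(4)$ mapping $\ell_0$ to $\ell$ exists because $\PSL_3(4)$ is transitive on lines of $\pg_2(4)$ and preserves each orbit $\mathcal{F}_i$, so applying it to $\Phi$ yields the required $\Phi_i\in\mathcal{F}_i$ having $\ell$ as a line, and we set $\Delta_i:=\ell+\Phi_i$. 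The orbit membership $\Delta_i\in\H_i$ is then forced by the labelling convention of Example~\ref{pslcodesex}: since $\Delta_i\cap\Phi_i=\Phi_i\setminus\ell$ has size $4>3$, the convention rules out $\Delta_i\in\H_j$ for $j\neq i$.

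The main technical step is establishing that $\Delta$ is a hyperoval. I would first record the preliminary fact that any line $m$ of $\pg_2(4)$ meets $\Phi$ in at most three points, with equality if and only if $m\cap\Phi$ is a line of $\Phi$. Indeed, if $|m\cap\Phi|\geq 4$, then any three points of $m\cap\Phi$ form a line of $\Phi$; but a fourth point $p\in m\cap\Phi$, together with one of the original three, spans a second line of $\Phi$ that is forced to lie on $m$ and hence to share two points with the first line, contradicting the axiom that two distinct lines of a Fano plane meet in exactly one point.

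To show $\Delta$ is a hyperoval, suppose for contradiction that three of its six points lie on some line $m$ of $\pg_2(4)$. Note $m\neq\ell$ since $|\ell\cap\Delta|=|\ell\setminus\Phi|=2$, so at most one of the three collinear points can lie in $\ell\setminus\Phi$. If all three lie in $\Phi\setminus\ell$, the preliminary observation makes $m\cap\Phi$ a line of $\Phi$ contained in $\Phi\setminus\ell$, hence disjoint from the line $\ell\cap\Phi$ of $\Phi$, contradicting the Fano axiom. Otherwise two points $a,b$ lie in $\Phi\setminus\ell$ and one point $c$ in $\ell\setminus\Phi$; the line $\{a,b,e\}$ of $\Phi$ through $a,b$ lies on the $\pg_2(4)$-line through $a,b$, which is $m$, forcing $e\in m\cap\Phi$. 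If $e\in\ell$, then $e\in\ell\cap m=\{c\}$ gives $e=c\notin\Phi$, a contradiction; if $e\notin\ell$, then $\{a,b,e\}\subseteq\Phi\setminus\ell$ reduces to the previous case.

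The main obstacle is precisely this case analysis for the hyperoval property, which hinges on combining the intersection bound with the Fano-plane axiom that distinct lines meet in a unique point; the remaining steps (orbit identification via the labelling, and existence of $\Phi_i\in\mathcal{F}_i$ containing $\ell$ as a line) reduce to straightforward transitivity arguments inside $\PSL_3(4)$.
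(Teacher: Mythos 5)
Your proof is correct, but it takes a genuinely different route from the paper's. The paper works in explicit homogeneous coordinates: it fixes the line $z=0$ and the Fano subplane $\Phi_1$ consisting of the points with all coordinates in $\F_2$, verifies directly that $\ell+\Phi_1$ is the unique hyperoval through the quadrangle $\Xi=\Phi_1\setminus\ell$, reads off $\ell+\Phi_1\in\H_1$ from $|\Phi_1\cap(\ell+\Phi_1)|=4$ exactly as you do, and then transports this single instance to $i=2,3$ via the diagonal map $\tau:(x,y,z)\mapsto(x,y,z\omega)$, which fixes $\ell$ and, by Lemma~\ref{symdiffhyperovals}, cycles both $\{\H_1,\H_2,\H_3\}$ and $\{\mathcal{F}_1,\mathcal{F}_2,\mathcal{F}_3\}$. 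You instead prove the coordinate-free statement that $\ell+\Phi$ is a hyperoval for \emph{every} Fano subplane $\Phi$ and every line $\ell$ meeting it in a line of $\Phi$, and then obtain the three required instances from transitivity of $\PSL_3(4)$ on lines together with the intersection-size labelling convention. Your version is more general, needs no coordinates, and does not invoke Lemma~\ref{symdiffhyperovals} at all; the paper's version is shorter once coordinates are set up and produces the explicit representatives that are reused later (e.g.\ in Lemma~\ref{submodulelattice}). Your case analysis establishing that no three points of $\ell+\Phi$ are collinear is sound.

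One small point to tighten: in your preliminary bound $|m\cap\Phi|\leq 3$, the claim that a second line of $\Phi$ lying on $m$ must ``share two points with the first line'' is immediate only when $|m\cap\Phi|=4$; if $|m\cap\Phi|=5$ the two lines of $\Phi$ could a priori meet in a single point, and one needs a third line, or more cleanly the observation that a point of $\Phi$ off $m$ lies on only three lines of $\Phi$ and hence sees at most three points of $m\cap\Phi$. This is a standard fact about Baer subplanes and is easily repaired, so it does not affect the validity of your argument.
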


\begin{proof}
 Let $\omega$ be a generator of the multiplicative group $\F_4^\times$, so that $\omega$ satisfies $\omega^2+\omega+1=0$, and identify $M$ with the set $\{(x,y,z)\mid x,y,z\in\F_4, (x,y,z)\neq (0,0,0)\}$ with the understanding that two triples $(x_1,x_2,x_3)$ and $(y_1,y_2,y_3)$ represent the same point of $M$ whenever there exists some $c\in \F_4$ such that $x_i=cy_i$ for all $i=1,2,3$. Let 
 \[
  \ell=\{ (1,0,0),(0,1,0),(1,1,0),(1,\omega,0),(1,\omega^2,0)\},
 \]
 and let 
 \[
  \Phi_1=\{(1,0,0),(0,1,0),(0,0,1),(0,1,1),(1,0,1),(1,1,0),(1,1,1)\}.
 \]
 Consider the subset 
 \[
  \ell+\Phi_1=\{(1,\omega,0),(1,\omega^2,0),(0,0,1),(0,1,1),(1,0,1),(1,1,1)\},
 \]
 of $M$. Now the four points $(0,0,1),(0,1,1),(1,0,1),(1,1,1)$ form a quadrangle $\Xi$, and considering each pair of points in the quadrangle gives six lines. These six lines contain all the points of $M$ except for $(1,\omega,0)$ and $(1,\omega^2,0)$, and hence $\ell+\Phi_1$ is the unique hyperoval containing the quadrangle $\Xi$. Moreover, since $\Phi_1$ and $\ell+\Phi_1$ intersect precisely in $\Xi$, it follows that $\ell+\Phi_1$ is a hyperoval in $\H_1$ (see Example~\ref{pslcodesex}). Thus we may let $\Delta_1=\ell+\Phi_1$. Let $\tau$ be the permutation on $M$ defined by $\tau:(x,y,z)\mapsto (x,y,z\omega)$. Since $\tau$ corresponds to a linear transformation with determinant $\omega$, $\tau$ corresponds to an element of $\PGL_3(4)\setminus\PSL_3(4)$. Now, by Lemma~\ref{symdiffhyperovals}, the quotient group $\PGaL_3(4)/\PSL_3(4)$ acts as $\s_3$ on the set $\{\H_1,\H_2,\H_3\}$, from which it follows that $\tau$ induces a $3$-cycle on each of the sets $\{\H_1,\H_2,\H_3\}$ and $\{\mathcal{F}_1,\mathcal{F}_2,\mathcal{F}_3\}$. Note that $\tau$ also fixes the line $\ell$. Hence, we may choose $\Phi_2=\Phi_1^\tau$, $\Phi_3=\Phi_1^{\tau^2}$, $\Delta_2=\Delta_1^\tau$ and $\Delta_3=\Delta_1^{\tau^2}$ so that $\Delta_i=\ell+\Phi_i$ for each $i=1,2,3$. This completes the proof.
\end{proof}

\begin{figure}
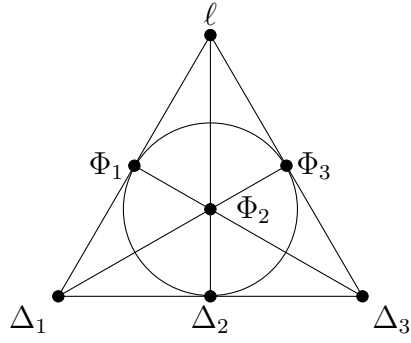

 \[
  \FanoPlane[4cm]
 \]
 \caption{Arrangement of the non-trivial cosets of $\P$ in $\P^\perp$, labelled by their representatives as in the proof of Lemma~\ref{psl34cmax}.}
 \label{fanoconfig}
\end{figure}

We now determine the structure of the submodule lattice, which in turn allows us to determine all the possibilities for $C_{\text{max}}$.

\begin{lemma}\label{submodulelattice}
 The submodule lattice of the permutation module over $\F_2$ for the action of $\PSL_3(4)$ on $\pg_2(4)$, identified with $V\varGamma$, is as in Figure~\ref{pslsubmodlattdiag}.
\end{lemma}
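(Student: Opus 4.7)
The plan is to combine Lemma~\ref{pslmaxminsubmodules}, which identifies $\P$ as the unique minimal submodule of the permutation module $V\varGamma\cong\F_2^{21}$ for $\PSL_3(4)$, with the self-duality of this module to reduce the problem to the analysis of the $3$-dimensional quotient $\P^\perp/\P$. The standard symmetric bilinear form on $V\varGamma$ is $\PSL_3(4)$-invariant (the group acts by permutation matrices), so $W\mapsto W^\perp$ is a containment-reversing bijection on submodules. Hence $\P^\perp$ is the unique maximal proper submodule, every nonzero proper submodule lies in the interval $[\P,\P^\perp]$, and this interval has length $\dim\P^\perp-\dim\P=12-9=3$.

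I would then argue that $\PSL_3(4)$ acts trivially on $\P^\perp/\P$ by an order argument: the induced homomorphism $\PSL_3(4)\to\GL(\P^\perp/\P)\cong\GL_3(\F_2)$ has target of order $168<20160=|\PSL_3(4)|$, so the kernel is nontrivial, and simplicity of $\PSL_3(4)$ forces the kernel to be all of $\PSL_3(4)$. Consequently every $\F_2$-subspace of $\P^\perp/\P$ is a submodule, and the interval $[\P,\P^\perp]$ coincides with the full subspace lattice of $\F_2^3$, whose seven $1$-dimensional and seven $2$-dimensional subspaces form the points and lines of the Fano plane $\pg_2(2)$.

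To pin down the seven nonzero cosets explicitly, I would use the representatives $\ell,\Phi_1,\Phi_2,\Phi_3,\Delta_1,\Delta_2,\Delta_3$. Lemma~\ref{sumoflineandFanoplane} gives $\Delta_i=\ell+\Phi_i$ in $V\varGamma$ (and hence modulo $\P$), so each triple $\{\ell+\P,\Phi_i+\P,\Delta_i+\P\}$ is linearly dependent, yielding three of the seven Fano lines directly. To show the seven cosets are truly distinct it suffices to verify that $\{\ell,\Phi_1,\Phi_2\}$ is $\F_2$-independent modulo $\P$: since $\P$ has minimum weight $8$, the weight-$5$ codeword $\ell$, the weight-$7$ codewords $\Phi_i$, and the weight-$6$ codewords $\Delta_i=\ell+\Phi_i$ all lie outside $\P$; moreover Lemma~\ref{symdiffhyperovals} lets us choose the hyperovals so that $\Phi_1+\Phi_2=\Delta_1+\Delta_2=\Delta_3$, a vector of weight $6<8$ and hence also outside $\P$. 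The remaining four Fano lines (including the triple $\{\Delta_1+\P,\Delta_2+\P,\Delta_3+\P\}$ and the inscribed-circle triple in Figure~\ref{pslsubmodlattdiag}) are then forced by $\F_2^3$-arithmetic.

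The main obstacle will be this final combinatorial bookkeeping: matching the seven named cosets to the specific points and lines of the Fano-plane diagram in Figure~\ref{pslsubmodlattdiag} and confirming that the seven triples summing to zero modulo $\P$ coincide with the labelled sides, medians, and inscribed circle. Once this is done, concatenating the Fano interval with the outer chain $\{0\}\subsetneq\P$ and $\P^\perp\subsetneq V\varGamma$ from the first step delivers the lattice shown in the figure.
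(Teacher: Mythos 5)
Your strategy is essentially the paper's: reduce to the three-dimensional quotient $\P^\perp/\P$, observe that its non-zero cosets carry a Fano-plane structure, and identify the seven cosets via the representatives $\ell,\Delta_i,\Phi_i$ using Lemma~\ref{sumoflineandFanoplane}, Lemma~\ref{symdiffhyperovals} and the fact that $\P$ has minimum weight $8$. Your explicit justification that $\PSL_3(4)$ acts trivially on $\P^\perp/\P$ (the induced map lands in $\GL_3(\F_2)$ of order $168<|\PSL_3(4)|$, so simplicity forces a trivial image) is a genuine improvement in rigour: the paper uses this fact only implicitly when it asserts that each $\langle\P,\alpha\rangle$ is a submodule and that $\PSL_3(4)$ fixes the configuration of Figure~\ref{fanoconfig} point-wise.

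There is, however, one genuine gap. You assert that every non-zero proper submodule lies in the interval $[\P,\P^\perp]$ and obtain the final lattice by concatenating that interval with the chains $\{{\b 0}\}\subsetneq\P$ and $\P^\perp\subsetneq V\varGamma$. This is false, and the lattice it produces does not match Figure~\ref{pslsubmodlattdiag}: the repetition code $\langle{\b 1}\rangle$ is a one-dimensional submodule (the all-ones vector is fixed by every permutation) which does not contain $\P$, and dually $\langle{\b 1}\rangle^\perp$ is a codimension-one submodule not contained in $\P^\perp$; both appear as nodes of the figure. The correct reduction is that every submodule of dimension at least $2$ contains $\P$ (equivalently, by your duality argument, every submodule of codimension at least $2$ lies in $\P^\perp$), so the lattice is the interval $[\P,\P^\perp]$ together with the four additional nodes $\{{\b 0}\}$, $\langle{\b 1}\rangle$, $\langle{\b 1}\rangle^\perp$ and $V\varGamma$. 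One must then locate $\langle{\b 1}\rangle$ relative to the interval, which the paper does by noting that all codewords of $\P$ have even weight, so ${\b 1}\notin\P$, while ${\b 1}$ is the sum of the five lines through a fixed point of $\pg_2(4)$, so ${\b 1}\in\L$; hence ${\b 1}\equiv\ell\pmod{\P}$ and $\L=\P\oplus\langle{\b 1}\rangle$ is the unique minimal member of the interval containing ${\b 1}$, which determines the edges at $\langle{\b 1}\rangle$ and, by duality, at $\langle{\b 1}\rangle^\perp$. With this correction the remainder of your argument goes through.
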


\begin{proof}
 By Lemma~\ref{pslmaxminsubmodules}, $\P$ and $\P^\perp$ are, respectively, the unique minimal and unique maximal non-trivial submodules. For $i=1,2,3$, let $\ell,\Delta_i,\Phi_i$ be as in Example~\ref{pslcodesex}, chosen as in Lemma~\ref{sumoflineandFanoplane}, so that $\Delta_i=\ell+\Phi_i$. We claim that the Fano plane in Figure~\ref{fanoconfig} represents the configuration of the non-trivial cosets of $\P$ in $\P^\perp$, where a vertex with label $\alpha$ represents the coset $\P+\alpha$. First, $\P$ has codimension $3$ in $\P^\perp$, and hence the non-trivial cosets of $\P$ may be represented by the points of a Fano plane. Also, $\P$ has minimum distance $8$ and so, for $\alpha=\ell,\Delta_i,\Phi_i$ with $i=1,2,3$, we see that $\P$ does not contain $\alpha$, and hence $\P+\alpha\neq \P$. Note that since $\P^\perp$ has covering radius $3$,  $\P^\perp$ does indeed contain each of $\ell,\Delta_i,\Phi_i$ for $i=1,2,3$. Now, $\Delta_i=\ell+\Phi_i$ for each $i=1,2,3$. Moreover, since each coset of $\P$ also has minimum distance $8$ we deduce that each of $\P+\ell$, $\P+\Delta_i$ and $\P+\Phi_i$ are pairwise distinct, for $i=1,2,3$, giving the three lines through $\ell$. By Lemma~\ref{symdiffhyperovals}, $\P+\Delta_1$, $\P+\Delta_2$ and $\P+\Delta_3$ are distinct cosets and lie on a common line, which then forces the remainder of the configuration. Thus our claim holds. 
 
 Each module in the submodule lattice, as in Figure~\ref{pslsubmodlattdiag}, may then be deduced from Figure~\ref{fanoconfig}: the points of the Fano plane correspond to the codes $\L$, $\langle\P,\Delta_i\rangle$, $\langle\P,\Phi_i\rangle$, for $i=1,2,3$ and the lines of the Fano plane correspond to the codes $\langle\P,\Delta_1,\Delta_2,\Delta_3\rangle$, $\langle\P,\ell,\Delta_i,\Phi_i\rangle$ and $\langle\P,\Delta_i,\Phi_{i+1},\Phi_{i+2}\rangle$, for $i=1,2,3$ and with subscripts modulo $3$. Now, all codewords of $\P$ have even weight, and hence $\P$ does not contain the repetition code $\langle{\b 1}\rangle$, whilst set $M$ is the sum of the five lines through a given point, and hence $\L$ does contain $\langle{\b 1}\rangle$. The remaining inclusions in Figure~\ref{pslsubmodlattdiag} follow from the incidences of Figure~\ref{fanoconfig}, completing the proof.
\end{proof}

\begin{figure}
 \centering
\begin{tikzpicture}[x=0.75pt,y=0.75pt,yscale=-0.75,xscale=0.85]

\draw    (525,85) -- (525,155) ;
\draw [shift={(525,155)}, rotate = 90] [color={rgb, 255:red, 0; green, 0; blue, 0 }  ][fill={rgb, 255:red, 0; green, 0; blue, 0 }  ][line width=0.75]      (0, 0) circle [x radius= 3.35, y radius= 3.35]   ;
\draw [shift={(525,85)}, rotate = 90] [color={rgb, 255:red, 0; green, 0; blue, 0 }  ][fill={rgb, 255:red, 0; green, 0; blue, 0 }  ][line width=0.75]      (0, 0) circle [x radius= 3.35, y radius= 3.35]   ;
\draw    (585,115) -- (585,185) ;
\draw [shift={(585,185)}, rotate = 90] [color={rgb, 255:red, 0; green, 0; blue, 0 }  ][fill={rgb, 255:red, 0; green, 0; blue, 0 }  ][line width=0.75]      (0, 0) circle [x radius= 3.35, y radius= 3.35]   ;
\draw [shift={(585,115)}, rotate = 90] [color={rgb, 255:red, 0; green, 0; blue, 0 }  ][fill={rgb, 255:red, 0; green, 0; blue, 0 }  ][line width=0.75]      (0, 0) circle [x radius= 3.35, y radius= 3.35]   ;
\draw    (585,460) -- (585,530) ;
\draw [shift={(585,530)}, rotate = 90] [color={rgb, 255:red, 0; green, 0; blue, 0 }  ][fill={rgb, 255:red, 0; green, 0; blue, 0 }  ][line width=0.75]      (0, 0) circle [x radius= 3.35, y radius= 3.35]   ;
\draw [shift={(585,460)}, rotate = 90] [color={rgb, 255:red, 0; green, 0; blue, 0 }  ][fill={rgb, 255:red, 0; green, 0; blue, 0 }  ][line width=0.75]      (0, 0) circle [x radius= 3.35, y radius= 3.35]   ;
\draw    (525,85) -- (585,115) ;
\draw [shift={(585,115)}, rotate = 26.57] [color={rgb, 255:red, 0; green, 0; blue, 0 }  ][fill={rgb, 255:red, 0; green, 0; blue, 0 }  ][line width=0.75]      (0, 0) circle [x radius= 3.35, y radius= 3.35]   ;
\draw [shift={(525,85)}, rotate = 26.57] [color={rgb, 255:red, 0; green, 0; blue, 0 }  ][fill={rgb, 255:red, 0; green, 0; blue, 0 }  ][line width=0.75]      (0, 0) circle [x radius= 3.35, y radius= 3.35]   ;
\draw    (525,155) -- (585,185) ;
\draw [shift={(585,185)}, rotate = 26.57] [color={rgb, 255:red, 0; green, 0; blue, 0 }  ][fill={rgb, 255:red, 0; green, 0; blue, 0 }  ][line width=0.75]      (0, 0) circle [x radius= 3.35, y radius= 3.35]   ;
\draw [shift={(525,155)}, rotate = 26.57] [color={rgb, 255:red, 0; green, 0; blue, 0 }  ][fill={rgb, 255:red, 0; green, 0; blue, 0 }  ][line width=0.75]      (0, 0) circle [x radius= 3.35, y radius= 3.35]   ;
\draw    (450,180) -- (525,155) ;
\draw [shift={(525,155)}, rotate = 341.57] [color={rgb, 255:red, 0; green, 0; blue, 0 }  ][fill={rgb, 255:red, 0; green, 0; blue, 0 }  ][line width=0.75]      (0, 0) circle [x radius= 3.35, y radius= 3.35]   ;
\draw [shift={(450,180)}, rotate = 341.57] [color={rgb, 255:red, 0; green, 0; blue, 0 }  ][fill={rgb, 255:red, 0; green, 0; blue, 0 }  ][line width=0.75]      (0, 0) circle [x radius= 3.35, y radius= 3.35]   ;
\draw    (660,435) -- (585,460) ;
\draw [shift={(585,460)}, rotate = 161.57] [color={rgb, 255:red, 0; green, 0; blue, 0 }  ][fill={rgb, 255:red, 0; green, 0; blue, 0 }  ][line width=0.75]      (0, 0) circle [x radius= 3.35, y radius= 3.35]   ;
\draw [shift={(660,435)}, rotate = 161.57] [color={rgb, 255:red, 0; green, 0; blue, 0 }  ][fill={rgb, 255:red, 0; green, 0; blue, 0 }  ][line width=0.75]      (0, 0) circle [x radius= 3.35, y radius= 3.35]   ;
\draw    (450,240) -- (525,155) ;
\draw [shift={(525,155)}, rotate = 311.42] [color={rgb, 255:red, 0; green, 0; blue, 0 }  ][fill={rgb, 255:red, 0; green, 0; blue, 0 }  ][line width=0.75]      (0, 0) circle [x radius= 3.35, y radius= 3.35]   ;
\draw [shift={(450,240)}, rotate = 311.42] [color={rgb, 255:red, 0; green, 0; blue, 0 }  ][fill={rgb, 255:red, 0; green, 0; blue, 0 }  ][line width=0.75]      (0, 0) circle [x radius= 3.35, y radius= 3.35]   ;
\draw    (450,210) -- (525,155) ;
\draw [shift={(525,155)}, rotate = 323.75] [color={rgb, 255:red, 0; green, 0; blue, 0 }  ][fill={rgb, 255:red, 0; green, 0; blue, 0 }  ][line width=0.75]      (0, 0) circle [x radius= 3.35, y radius= 3.35]   ;
\draw [shift={(450,210)}, rotate = 323.75] [color={rgb, 255:red, 0; green, 0; blue, 0 }  ][fill={rgb, 255:red, 0; green, 0; blue, 0 }  ][line width=0.75]      (0, 0) circle [x radius= 3.35, y radius= 3.35]   ;
\draw    (450,300) -- (525,155) ;
\draw [shift={(525,155)}, rotate = 297.35] [color={rgb, 255:red, 0; green, 0; blue, 0 }  ][fill={rgb, 255:red, 0; green, 0; blue, 0 }  ][line width=0.75]      (0, 0) circle [x radius= 3.35, y radius= 3.35]   ;
\draw [shift={(450,300)}, rotate = 297.35] [color={rgb, 255:red, 0; green, 0; blue, 0 }  ][fill={rgb, 255:red, 0; green, 0; blue, 0 }  ][line width=0.75]      (0, 0) circle [x radius= 3.35, y radius= 3.35]   ;
\draw    (450,270) -- (525,155) ;
\draw [shift={(525,155)}, rotate = 303.11] [color={rgb, 255:red, 0; green, 0; blue, 0 }  ][fill={rgb, 255:red, 0; green, 0; blue, 0 }  ][line width=0.75]      (0, 0) circle [x radius= 3.35, y radius= 3.35]   ;
\draw [shift={(450,270)}, rotate = 303.11] [color={rgb, 255:red, 0; green, 0; blue, 0 }  ][fill={rgb, 255:red, 0; green, 0; blue, 0 }  ][line width=0.75]      (0, 0) circle [x radius= 3.35, y radius= 3.35]   ;
\draw    (450,330) -- (525,155) ;
\draw [shift={(525,155)}, rotate = 293.2] [color={rgb, 255:red, 0; green, 0; blue, 0 }  ][fill={rgb, 255:red, 0; green, 0; blue, 0 }  ][line width=0.75]      (0, 0) circle [x radius= 3.35, y radius= 3.35]   ;
\draw [shift={(450,330)}, rotate = 293.2] [color={rgb, 255:red, 0; green, 0; blue, 0 }  ][fill={rgb, 255:red, 0; green, 0; blue, 0 }  ][line width=0.75]      (0, 0) circle [x radius= 3.35, y radius= 3.35]   ;
\draw    (660,375) -- (585,460) ;
\draw [shift={(585,460)}, rotate = 131.42] [color={rgb, 255:red, 0; green, 0; blue, 0 }  ][fill={rgb, 255:red, 0; green, 0; blue, 0 }  ][line width=0.75]      (0, 0) circle [x radius= 3.35, y radius= 3.35]   ;
\draw [shift={(660,375)}, rotate = 131.42] [color={rgb, 255:red, 0; green, 0; blue, 0 }  ][fill={rgb, 255:red, 0; green, 0; blue, 0 }  ][line width=0.75]      (0, 0) circle [x radius= 3.35, y radius= 3.35]   ;
\draw    (660,405) -- (585,460) ;
\draw [shift={(585,460)}, rotate = 143.75] [color={rgb, 255:red, 0; green, 0; blue, 0 }  ][fill={rgb, 255:red, 0; green, 0; blue, 0 }  ][line width=0.75]      (0, 0) circle [x radius= 3.35, y radius= 3.35]   ;
\draw [shift={(660,405)}, rotate = 143.75] [color={rgb, 255:red, 0; green, 0; blue, 0 }  ][fill={rgb, 255:red, 0; green, 0; blue, 0 }  ][line width=0.75]      (0, 0) circle [x radius= 3.35, y radius= 3.35]   ;
\draw    (660,315) -- (585,460) ;
\draw [shift={(585,460)}, rotate = 117.35] [color={rgb, 255:red, 0; green, 0; blue, 0 }  ][fill={rgb, 255:red, 0; green, 0; blue, 0 }  ][line width=0.75]      (0, 0) circle [x radius= 3.35, y radius= 3.35]   ;
\draw [shift={(660,315)}, rotate = 117.35] [color={rgb, 255:red, 0; green, 0; blue, 0 }  ][fill={rgb, 255:red, 0; green, 0; blue, 0 }  ][line width=0.75]      (0, 0) circle [x radius= 3.35, y radius= 3.35]   ;
\draw    (660,345) -- (585,460) ;
\draw [shift={(585,460)}, rotate = 123.11] [color={rgb, 255:red, 0; green, 0; blue, 0 }  ][fill={rgb, 255:red, 0; green, 0; blue, 0 }  ][line width=0.75]      (0, 0) circle [x radius= 3.35, y radius= 3.35]   ;
\draw [shift={(660,345)}, rotate = 123.11] [color={rgb, 255:red, 0; green, 0; blue, 0 }  ][fill={rgb, 255:red, 0; green, 0; blue, 0 }  ][line width=0.75]      (0, 0) circle [x radius= 3.35, y radius= 3.35]   ;
\draw    (660,285) -- (585,460) ;
\draw [shift={(585,460)}, rotate = 113.2] [color={rgb, 255:red, 0; green, 0; blue, 0 }  ][fill={rgb, 255:red, 0; green, 0; blue, 0 }  ][line width=0.75]      (0, 0) circle [x radius= 3.35, y radius= 3.35]   ;
\draw [shift={(660,285)}, rotate = 113.2] [color={rgb, 255:red, 0; green, 0; blue, 0 }  ][fill={rgb, 255:red, 0; green, 0; blue, 0 }  ][line width=0.75]      (0, 0) circle [x radius= 3.35, y radius= 3.35]   ;
\draw    (525,430) -- (525,500) ;
\draw [shift={(525,500)}, rotate = 90] [color={rgb, 255:red, 0; green, 0; blue, 0 }  ][fill={rgb, 255:red, 0; green, 0; blue, 0 }  ][line width=0.75]      (0, 0) circle [x radius= 3.35, y radius= 3.35]   ;
\draw [shift={(525,430)}, rotate = 90] [color={rgb, 255:red, 0; green, 0; blue, 0 }  ][fill={rgb, 255:red, 0; green, 0; blue, 0 }  ][line width=0.75]      (0, 0) circle [x radius= 3.35, y radius= 3.35]   ;
\draw    (525,430) -- (585,460) ;
\draw [shift={(585,460)}, rotate = 26.57] [color={rgb, 255:red, 0; green, 0; blue, 0 }  ][fill={rgb, 255:red, 0; green, 0; blue, 0 }  ][line width=0.75]      (0, 0) circle [x radius= 3.35, y radius= 3.35]   ;
\draw [shift={(525,430)}, rotate = 26.57] [color={rgb, 255:red, 0; green, 0; blue, 0 }  ][fill={rgb, 255:red, 0; green, 0; blue, 0 }  ][line width=0.75]      (0, 0) circle [x radius= 3.35, y radius= 3.35]   ;
\draw    (525,500) -- (585,530) ;
\draw [shift={(585,530)}, rotate = 26.57] [color={rgb, 255:red, 0; green, 0; blue, 0 }  ][fill={rgb, 255:red, 0; green, 0; blue, 0 }  ][line width=0.75]      (0, 0) circle [x radius= 3.35, y radius= 3.35]   ;
\draw [shift={(525,500)}, rotate = 26.57] [color={rgb, 255:red, 0; green, 0; blue, 0 }  ][fill={rgb, 255:red, 0; green, 0; blue, 0 }  ][line width=0.75]      (0, 0) circle [x radius= 3.35, y radius= 3.35]   ;
\draw    (660,285) -- (450,180) ;
\draw [shift={(450,180)}, rotate = 206.57] [color={rgb, 255:red, 0; green, 0; blue, 0 }  ][fill={rgb, 255:red, 0; green, 0; blue, 0 }  ][line width=0.75]      (0, 0) circle [x radius= 3.35, y radius= 3.35]   ;
\draw [shift={(660,285)}, rotate = 206.57] [color={rgb, 255:red, 0; green, 0; blue, 0 }  ][fill={rgb, 255:red, 0; green, 0; blue, 0 }  ][line width=0.75]      (0, 0) circle [x radius= 3.35, y radius= 3.35]   ;
\draw    (660,315) -- (450,210) ;
\draw [shift={(450,210)}, rotate = 206.57] [color={rgb, 255:red, 0; green, 0; blue, 0 }  ][fill={rgb, 255:red, 0; green, 0; blue, 0 }  ][line width=0.75]      (0, 0) circle [x radius= 3.35, y radius= 3.35]   ;
\draw [shift={(660,315)}, rotate = 206.57] [color={rgb, 255:red, 0; green, 0; blue, 0 }  ][fill={rgb, 255:red, 0; green, 0; blue, 0 }  ][line width=0.75]      (0, 0) circle [x radius= 3.35, y radius= 3.35]   ;
\draw    (660,345) -- (450,240) ;
\draw [shift={(450,240)}, rotate = 206.57] [color={rgb, 255:red, 0; green, 0; blue, 0 }  ][fill={rgb, 255:red, 0; green, 0; blue, 0 }  ][line width=0.75]      (0, 0) circle [x radius= 3.35, y radius= 3.35]   ;
\draw [shift={(660,345)}, rotate = 206.57] [color={rgb, 255:red, 0; green, 0; blue, 0 }  ][fill={rgb, 255:red, 0; green, 0; blue, 0 }  ][line width=0.75]      (0, 0) circle [x radius= 3.35, y radius= 3.35]   ;
\draw    (660,405) -- (450,270) ;
\draw [shift={(450,270)}, rotate = 212.74] [color={rgb, 255:red, 0; green, 0; blue, 0 }  ][fill={rgb, 255:red, 0; green, 0; blue, 0 }  ][line width=0.75]      (0, 0) circle [x radius= 3.35, y radius= 3.35]   ;
\draw [shift={(660,405)}, rotate = 212.74] [color={rgb, 255:red, 0; green, 0; blue, 0 }  ][fill={rgb, 255:red, 0; green, 0; blue, 0 }  ][line width=0.75]      (0, 0) circle [x radius= 3.35, y radius= 3.35]   ;
\draw    (660,435) -- (450,300) ;
\draw [shift={(450,300)}, rotate = 212.74] [color={rgb, 255:red, 0; green, 0; blue, 0 }  ][fill={rgb, 255:red, 0; green, 0; blue, 0 }  ][line width=0.75]      (0, 0) circle [x radius= 3.35, y radius= 3.35]   ;
\draw [shift={(660,435)}, rotate = 212.74] [color={rgb, 255:red, 0; green, 0; blue, 0 }  ][fill={rgb, 255:red, 0; green, 0; blue, 0 }  ][line width=0.75]      (0, 0) circle [x radius= 3.35, y radius= 3.35]   ;
\draw    (660,375) -- (450,300) ;
\draw [shift={(450,300)}, rotate = 199.65] [color={rgb, 255:red, 0; green, 0; blue, 0 }  ][fill={rgb, 255:red, 0; green, 0; blue, 0 }  ][line width=0.75]      (0, 0) circle [x radius= 3.35, y radius= 3.35]   ;
\draw [shift={(660,375)}, rotate = 199.65] [color={rgb, 255:red, 0; green, 0; blue, 0 }  ][fill={rgb, 255:red, 0; green, 0; blue, 0 }  ][line width=0.75]      (0, 0) circle [x radius= 3.35, y radius= 3.35]   ;
\draw    (660,435) -- (450,270) ;
\draw [shift={(450,270)}, rotate = 218.16] [color={rgb, 255:red, 0; green, 0; blue, 0 }  ][fill={rgb, 255:red, 0; green, 0; blue, 0 }  ][line width=0.75]      (0, 0) circle [x radius= 3.35, y radius= 3.35]   ;
\draw [shift={(660,435)}, rotate = 218.16] [color={rgb, 255:red, 0; green, 0; blue, 0 }  ][fill={rgb, 255:red, 0; green, 0; blue, 0 }  ][line width=0.75]      (0, 0) circle [x radius= 3.35, y radius= 3.35]   ;
\draw    (660,405) -- (450,330) ;
\draw [shift={(450,330)}, rotate = 199.65] [color={rgb, 255:red, 0; green, 0; blue, 0 }  ][fill={rgb, 255:red, 0; green, 0; blue, 0 }  ][line width=0.75]      (0, 0) circle [x radius= 3.35, y radius= 3.35]   ;
\draw [shift={(660,405)}, rotate = 199.65] [color={rgb, 255:red, 0; green, 0; blue, 0 }  ][fill={rgb, 255:red, 0; green, 0; blue, 0 }  ][line width=0.75]      (0, 0) circle [x radius= 3.35, y radius= 3.35]   ;
\draw    (660,375) -- (450,330) ;
\draw [shift={(450,330)}, rotate = 192.09] [color={rgb, 255:red, 0; green, 0; blue, 0 }  ][fill={rgb, 255:red, 0; green, 0; blue, 0 }  ][line width=0.75]      (0, 0) circle [x radius= 3.35, y radius= 3.35]   ;
\draw [shift={(660,375)}, rotate = 192.09] [color={rgb, 255:red, 0; green, 0; blue, 0 }  ][fill={rgb, 255:red, 0; green, 0; blue, 0 }  ][line width=0.75]      (0, 0) circle [x radius= 3.35, y radius= 3.35]   ;
\draw    (660,315) -- (585,185) ;
\draw [shift={(585,185)}, rotate = 240.02] [color={rgb, 255:red, 0; green, 0; blue, 0 }  ][fill={rgb, 255:red, 0; green, 0; blue, 0 }  ][line width=0.75]      (0, 0) circle [x radius= 3.35, y radius= 3.35]   ;
\draw [shift={(660,315)}, rotate = 240.02] [color={rgb, 255:red, 0; green, 0; blue, 0 }  ][fill={rgb, 255:red, 0; green, 0; blue, 0 }  ][line width=0.75]      (0, 0) circle [x radius= 3.35, y radius= 3.35]   ;
\draw    (660,345) -- (585,185) ;
\draw [shift={(585,185)}, rotate = 244.89] [color={rgb, 255:red, 0; green, 0; blue, 0 }  ][fill={rgb, 255:red, 0; green, 0; blue, 0 }  ][line width=0.75]      (0, 0) circle [x radius= 3.35, y radius= 3.35]   ;
\draw [shift={(660,345)}, rotate = 244.89] [color={rgb, 255:red, 0; green, 0; blue, 0 }  ][fill={rgb, 255:red, 0; green, 0; blue, 0 }  ][line width=0.75]      (0, 0) circle [x radius= 3.35, y radius= 3.35]   ;
\draw    (660,285) -- (585,185) ;
\draw [shift={(585,185)}, rotate = 233.13] [color={rgb, 255:red, 0; green, 0; blue, 0 }  ][fill={rgb, 255:red, 0; green, 0; blue, 0 }  ][line width=0.75]      (0, 0) circle [x radius= 3.35, y radius= 3.35]   ;
\draw [shift={(660,285)}, rotate = 233.13] [color={rgb, 255:red, 0; green, 0; blue, 0 }  ][fill={rgb, 255:red, 0; green, 0; blue, 0 }  ][line width=0.75]      (0, 0) circle [x radius= 3.35, y radius= 3.35]   ;
\draw    (450,240) -- (525,430) ;
\draw [shift={(525,430)}, rotate = 68.46] [color={rgb, 255:red, 0; green, 0; blue, 0 }  ][fill={rgb, 255:red, 0; green, 0; blue, 0 }  ][line width=0.75]      (0, 0) circle [x radius= 3.35, y radius= 3.35]   ;
\draw [shift={(450,240)}, rotate = 68.46] [color={rgb, 255:red, 0; green, 0; blue, 0 }  ][fill={rgb, 255:red, 0; green, 0; blue, 0 }  ][line width=0.75]      (0, 0) circle [x radius= 3.35, y radius= 3.35]   ;
\draw    (450,180) -- (525,430) ;
\draw [shift={(525,430)}, rotate = 73.3] [color={rgb, 255:red, 0; green, 0; blue, 0 }  ][fill={rgb, 255:red, 0; green, 0; blue, 0 }  ][line width=0.75]      (0, 0) circle [x radius= 3.35, y radius= 3.35]   ;
\draw [shift={(450,180)}, rotate = 73.3] [color={rgb, 255:red, 0; green, 0; blue, 0 }  ][fill={rgb, 255:red, 0; green, 0; blue, 0 }  ][line width=0.75]      (0, 0) circle [x radius= 3.35, y radius= 3.35]   ;
\draw    (450,210) -- (525,430) ;
\draw [shift={(525,430)}, rotate = 71.18] [color={rgb, 255:red, 0; green, 0; blue, 0 }  ][fill={rgb, 255:red, 0; green, 0; blue, 0 }  ][line width=0.75]      (0, 0) circle [x radius= 3.35, y radius= 3.35]   ;
\draw [shift={(450,210)}, rotate = 71.18] [color={rgb, 255:red, 0; green, 0; blue, 0 }  ][fill={rgb, 255:red, 0; green, 0; blue, 0 }  ][line width=0.75]      (0, 0) circle [x radius= 3.35, y radius= 3.35]   ;
\draw    (660,285) -- (450,270) ;
\draw [shift={(450,270)}, rotate = 184.09] [color={rgb, 255:red, 0; green, 0; blue, 0 }  ][fill={rgb, 255:red, 0; green, 0; blue, 0 }  ][line width=0.75]      (0, 0) circle [x radius= 3.35, y radius= 3.35]   ;
\draw [shift={(660,285)}, rotate = 184.09] [color={rgb, 255:red, 0; green, 0; blue, 0 }  ][fill={rgb, 255:red, 0; green, 0; blue, 0 }  ][line width=0.75]      (0, 0) circle [x radius= 3.35, y radius= 3.35]   ;
\draw    (660,315) -- (450,300) ;
\draw [shift={(450,300)}, rotate = 184.09] [color={rgb, 255:red, 0; green, 0; blue, 0 }  ][fill={rgb, 255:red, 0; green, 0; blue, 0 }  ][line width=0.75]      (0, 0) circle [x radius= 3.35, y radius= 3.35]   ;
\draw [shift={(660,315)}, rotate = 184.09] [color={rgb, 255:red, 0; green, 0; blue, 0 }  ][fill={rgb, 255:red, 0; green, 0; blue, 0 }  ][line width=0.75]      (0, 0) circle [x radius= 3.35, y radius= 3.35]   ;
\draw    (660,345) -- (450,330) ;
\draw [shift={(450,330)}, rotate = 184.09] [color={rgb, 255:red, 0; green, 0; blue, 0 }  ][fill={rgb, 255:red, 0; green, 0; blue, 0 }  ][line width=0.75]      (0, 0) circle [x radius= 3.35, y radius= 3.35]   ;
\draw [shift={(660,345)}, rotate = 184.09] [color={rgb, 255:red, 0; green, 0; blue, 0 }  ][fill={rgb, 255:red, 0; green, 0; blue, 0 }  ][line width=0.75]      (0, 0) circle [x radius= 3.35, y radius= 3.35]   ;
\draw    (660,375) -- (450,180) ;
\draw [shift={(450,180)}, rotate = 222.88] [color={rgb, 255:red, 0; green, 0; blue, 0 }  ][fill={rgb, 255:red, 0; green, 0; blue, 0 }  ][line width=0.75]      (0, 0) circle [x radius= 3.35, y radius= 3.35]   ;
\draw [shift={(660,375)}, rotate = 222.88] [color={rgb, 255:red, 0; green, 0; blue, 0 }  ][fill={rgb, 255:red, 0; green, 0; blue, 0 }  ][line width=0.75]      (0, 0) circle [x radius= 3.35, y radius= 3.35]   ;
\draw    (660,405) -- (450,210) ;
\draw [shift={(450,210)}, rotate = 222.88] [color={rgb, 255:red, 0; green, 0; blue, 0 }  ][fill={rgb, 255:red, 0; green, 0; blue, 0 }  ][line width=0.75]      (0, 0) circle [x radius= 3.35, y radius= 3.35]   ;
\draw [shift={(660,405)}, rotate = 222.88] [color={rgb, 255:red, 0; green, 0; blue, 0 }  ][fill={rgb, 255:red, 0; green, 0; blue, 0 }  ][line width=0.75]      (0, 0) circle [x radius= 3.35, y radius= 3.35]   ;
\draw    (660,435) -- (450,240) ;
\draw [shift={(450,240)}, rotate = 222.88] [color={rgb, 255:red, 0; green, 0; blue, 0 }  ][fill={rgb, 255:red, 0; green, 0; blue, 0 }  ][line width=0.75]      (0, 0) circle [x radius= 3.35, y radius= 3.35]   ;
\draw [shift={(660,435)}, rotate = 222.88] [color={rgb, 255:red, 0; green, 0; blue, 0 }  ][fill={rgb, 255:red, 0; green, 0; blue, 0 }  ][line width=0.75]      (0, 0) circle [x radius= 3.35, y radius= 3.35]   ;

\begin{footnotesize}
 
\draw (512.5,144.5) node    {$\P^{\perp }$};
\draw (512,430) node    {$\L$};
\draw (500,80) node    {$V\varGamma$};
\draw (603,470) node    {$\P$};
\draw (612,109.5) node    {$\langle {\b 1}\rangle ^{\perp }$};
\draw (512,115) node    {$9$};
\draw (603,150) node    {$9$};
\draw (512,460) node    {$9$};
\draw (603,500) node    {$9$};
\draw (507.5,495) node    {$\langle {\b 1}\rangle $};
\draw (602.5,535) node    {$\{{\b 0}\}$};
\draw (643,179.5) node    {$\langle\P,\Delta_1,\Delta_2,\Delta_3\rangle$};
\draw (390,181) node    {$\langle\P,\ell,\Delta_1,\Phi_1\rangle$};
\draw (390,211) node    {$\langle\P,\ell,\Delta_2,\Phi_2\rangle$};
\draw (390,241) node    {$\langle\P,\ell,\Delta_3,\Phi_3\rangle$};
\draw (697.5,286) node    {$\langle\P, \Delta _{1}\rangle$};
\draw (697.5,316) node    {$\langle\P, \Delta _{2}\rangle$};
\draw (697.5,346) node    {$\langle\P, \Delta _{3}\rangle$};
\draw (697,376) node    {$\langle\P, \Phi _{1}\rangle$};
\draw (698,406) node    {$\langle\P, \Phi _{2}\rangle$};
\draw (698,436) node    {$\langle\P, \Phi _{3}\rangle$};
\draw (390,269) node    {$\langle\P,\Delta_1,\Phi _2,\Phi_3\rangle$};
\draw (390,299) node    {$\langle\P,\Delta_2,\Phi _1,\Phi_3\rangle$};
\draw (390,329) node    {$\langle\P,\Delta_3,\Phi _2,\Phi_1\rangle$};

\end{footnotesize}

\end{tikzpicture}
 \caption{The submodule lattice of the permutation module over $\F_2$ for the action of $\PSL_3(4)$ on the points of $\pg_{2}(4)$. Each vertex is labelled with the corresponding code; each edge indicates the codimension of the corresponding inclusion, with an empty label denoting codimension $1$. For definitions of the codes see Example~\ref{pslcodesex} and for further information see Lemma~\ref{submodulelattice}.}
 \label{pslsubmodlattdiag}
\end{figure}
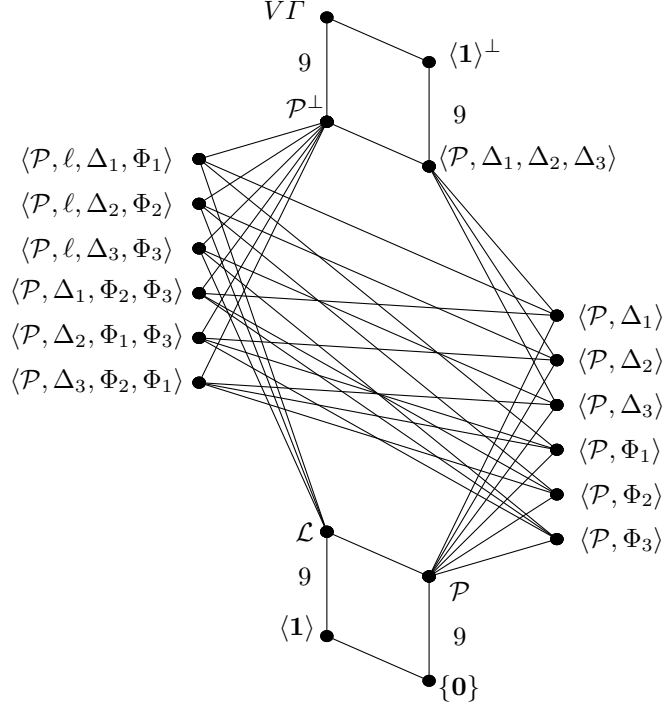

\begin{lemma}\label{psl34cmax}
 Suppose $m=21$ and Hypothesis~\ref{hyp2} holds with $\soc(X/K)\cong\PSL_3(4)$. Then $C_{\text{max}}$ is equivalent to one of following codes:
 \begin{itemize}
  \item The $[21,12,5;3]$ code $\P^\perp$ with $X_{\b 0}\cong \PGaL_3(4)$.
  \item The $[21,11,5;6]$ code $\langle\L,\Delta_1\rangle=\langle\P,\ell,\Delta_1,\Phi_1\rangle$ with $X_{\b 0}\cong \PSiL_3(4)$.
  \item The $[21,11,6;5]$ code $\langle\P,\Phi_1\rangle^\perp=\langle\P,\Delta_1,\Phi_2,\Phi_3\rangle$ with $X_{\b 0}\cong \PSiL_3(4)$.
  \item The $[21,11,6;5]$ code $\L^\perp=\langle\P,\Delta_1,\Delta_2,\Delta_3\rangle$ with $X_{\b 0}\cong \PGaL_3(4)$.
  \item The $[21,10,5;6]$ code $\L=\langle\P,\ell\rangle$ with $X_{\b 0}\cong \PGaL_3(4)$.
  \item The $[21,10,6;7]$ code $\langle\P,\Delta_1\rangle=\langle\L,\Delta_1\rangle^\perp$ with $X_{\b 0}\cong \PSiL_3(4)$.
  \item The $[21,10,7;6]$ code $\langle\P,\Phi_1\rangle$ with $X_{\b 0}\cong \PSiL_3(4)$.
  \item The $[21,9,8;7]$ code $\P$ with $X_{\b 0}\cong \PGaL_3(4)$.
 \end{itemize}
\end{lemma}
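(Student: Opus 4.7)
The plan is to combine the structural results of Lemmas~\ref{pslmaxminsubmodules} and~\ref{submodulelattice} with the $\s_3$-action of $\PGaL_3(4)/\PSL_3(4)$ on Figure~\ref{fanoconfig} to enumerate equivalence classes of candidate $C_{\text{max}}$, and then verify the stabilisers and parameters of each. First I would invoke Lemma~\ref{pslmaxminsubmodules} to restrict $C_{\text{max}}$ to the interval $[\P,\P^\perp]$ in the submodule lattice, noting that the dimension bound $2\leq \dim(C_{\text{max}})\leq m-2$ of Hypothesis~\ref{hyp1} rules out the remaining submodules $\{\b 0\}, \langle\b 1\rangle, \langle\b 1\rangle^\perp, V\varGamma$. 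Lemma~\ref{submodulelattice} then lists the $1+7+7+1=16$ candidates: $\P$, $\P^\perp$, the seven dimension-$10$ extensions $\L,\langle\P,\Delta_i\rangle,\langle\P,\Phi_i\rangle$, and the seven dimension-$11$ extensions arising from lines of the Fano configuration.

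Next I would apply Lemma~\ref{symdiffhyperovals}: $\PGaL_3(4)/\PSL_3(4)\cong \s_3$ acts as $\s_3$ on the set $\{\H_1,\H_2,\H_3\}$ and, compatibly via the pairing of Example~\ref{pslcodesex}, on $\{\mathcal F_1,\mathcal F_2,\mathcal F_3\}$. Transported to Figure~\ref{fanoconfig}, this action fixes $\ell$ and the Fano line $\{\Delta_1,\Delta_2,\Delta_3\}$ while permuting the triples $\{\Delta_i\}$ and $\{\Phi_i\}$ simultaneously. Reading off the orbits gives four singletons ($\P,\L,\L^\perp,\P^\perp$) and four orbits of size three (with representatives $\langle\P,\Delta_1\rangle$, $\langle\P,\Phi_1\rangle$, $\langle\P,\ell,\Delta_1,\Phi_1\rangle=\langle\L,\Delta_1\rangle$, and $\langle\P,\Delta_1,\Phi_2,\Phi_3\rangle$), totalling the eight codes claimed. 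The identifications $\L^\perp=\langle\P,\Delta_1,\Delta_2,\Delta_3\rangle$, $\langle\L,\Delta_1\rangle^\perp=\langle\P,\Delta_1\rangle$, and $\langle\P,\Phi_1\rangle^\perp=\langle\P,\Delta_1,\Phi_2,\Phi_3\rangle$ would be checked by locating each dual inside $\P^\perp$: dualising a codimension-$k$ submodule in $V\varGamma$ of a self-dual-pair configuration yields the unique dimension-$(21-k)$ submodule fixed by the same stabiliser, which in the Fano picture of Figure~\ref{fanoconfig} corresponds to the unique point/line incident with the appropriate elements.

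For the stabilisers, observe that since $q=2$, the $\b 0$-stabiliser in $\Aut(\varGamma)=\Z_2^m\rtimes \s_m$ is exactly the top group, so $X_{\b 0}=\PermAut(C_{\text{max}})\leq \Sym(M)$. By Hypothesis~\ref{hyp2} the socle of $X_{\b 0}^M$ is $\PSL_3(4)$, which constrains $X_{\b 0}$ to an overgroup of $\PSL_3(4)$ in $\s_{21}$ whose socle is still $\PSL_3(4)$; these are precisely $\PSL_3(4),\PSiL_3(4),\PGL_3(4),\PGaL_3(4)$, so $X_{\b 0}\leq \PGaL_3(4)$. Combined with the fact that $X_{\b 0}$ is by definition the full stabiliser of $C_{\text{max}}$ in $\Sym(M)$, we conclude $X_{\b 0}=\PGaL_3(4)_{C_{\text{max}}}$, which is $\PGaL_3(4)$ on the four $\PGaL_3(4)$-invariant codes (lines 1, 4, 5, 8) and an index-$3$ subgroup, hence a conjugate of $\PSiL_3(4)$, on the other four (lines 2, 3, 6, 7).

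It remains to read off the parameters $[m,k,\delta;\rho]$ for each code, confirming that $\delta_{\text{max}}\geq 5$ (so Hypothesis~\ref{hyp1} is satisfied for every surviving candidate). The dimensions are immediate from the lattice; $\P$ and $\P^\perp$ have the parameters stated in Lemma~\ref{pslmaxminsubmodules}; the remaining minimum distances follow by a mix of duality and direct inspection of generators ($\ell$ has weight $5$, each hyperoval has weight $6$, each Fano plane has weight $7$, and containment in $\P^\perp$ forces $\delta\geq 5$ for every submodule above $\P$). The main obstacle will be the bookkeeping: making sure the eight $\PGaL_3(4)$-orbits are exactly those in the statement, correctly pairing each code with its dual inside $\P^\perp$, and verifying the covering radii — this last step is routine for the dimension-$12$ and dimension-$9$ codes but requires some case analysis for the middle layers, most cleanly achieved by noting that $\rho$ and $\rho^\perp$ interact via the MacWilliams transform of the distance distribution.
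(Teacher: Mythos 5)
Your overall strategy---restrict $C_{\text{max}}$ to the interval between $\P$ and $\P^\perp$ via Lemma~\ref{pslmaxminsubmodules}, enumerate the sixteen submodules via Lemma~\ref{submodulelattice}, collapse them into eight equivalence classes using the $\s_3$-action from Lemma~\ref{symdiffhyperovals}, and then read off stabilisers and parameters---is the same as the paper's. However, two of your steps would fail as written.

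First, the dual identifications. You assert that dualising yields ``the unique dimension-$(21-k)$ submodule fixed by the same stabiliser.'' That uniqueness is false: the copy of $\PSiL_3(4)$ stabilising $\P+\Delta_1$ also stabilises $\P+\ell$ and $\P+\Phi_1$, so it fixes two dimension-$11$ submodules that are not $\PGaL_3(4)$-invariant, namely $\langle\P,\ell,\Delta_1,\Phi_1\rangle$ and $\langle\P,\Delta_1,\Phi_2,\Phi_3\rangle$, and likewise two dimension-$10$ ones, $\langle\P,\Delta_1\rangle$ and $\langle\P,\Phi_1\rangle$. Deciding which member of each pair is the dual of which is precisely the point at issue, and your criterion cannot resolve it. The paper settles this by a containment-counting argument (e.g.\ $\langle\P,\Delta_1\rangle$ lies in three pairwise inequivalent dimension-$11$ codes, whereas $\langle\P,\Phi_1\rangle$ lies in a pair of equivalent ones); alternatively one can compute directly that $\ell$ and $\Delta_1$ are orthogonal to $\P$ and to each other (a line meets a hyperoval in an even number of points), forcing $\langle\P,\Delta_1\rangle^\perp\supseteq\langle\P,\ell,\Delta_1\rangle=\langle\L,\Delta_1\rangle$. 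Some such supplementary argument is needed.

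Second, the covering radii. The MacWilliams transform gives the external distance, which only bounds the covering radius from above, and for several of these codes the bound is far from tight: $\langle\P,\Delta_1\rangle$ has covering radius $7$ while its dual has $13$ non-zero weights, and $\P$ has covering radius $7$ while $\P^\perp$ has $13$ non-zero weights. So ``the MacWilliams transform of the distance distribution'' is not a method for determining $\rho$ here; the paper resorts to explicit computation in the GAP package GUAVA, and you would have to do something equivalent.
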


\begin{proof}
 Let $C=C_{\text{max}}$. Since $2\leq \dim(C)\leq m-2$, the possibilities for $C$ are determined by the non-trivial submodules as in Lemma~\ref{submodulelattice} and Figure~\ref{pslsubmodlattdiag}, from which the dimension of each code and equivalences between codes can be deduced. The equality $\langle\L,\Delta_1\rangle=\langle\P,\ell,\Delta_1,\Phi_1\rangle$ follows from the fact that $\L=\langle\P,\ell\rangle$ and from Lemma~\ref{sumoflineandFanoplane}, and Figure~\ref{pslsubmodlattdiag} implies the equality $\L^\perp=\langle\P,\Delta_1,\Delta_2,\Delta_3\rangle$. In each case $X_{\b 0}$ can be deduced by considering the Fano plane in Figure~\ref{fanoconfig} and the fact that $\PSL_3(4)$ fixes it point-wise while, by Lemma~\ref{symdiffhyperovals}, $\PGaL_3(4)$ fixes $\P+\ell$ and induces $\s_3$ on $\{\P+\Delta_i\mid i=1,2,3\}$. Indeed, the codes containing precisely one orbit of hyperovals, or precisely one or two orbits of Fano subplanes, satisfy $X_{\b 0}\cong\PSiL_3(4)$ whilst the remaining codes satisfy $X_{\b 0}\cong\PGaL_3(4)$. Note that there are three copies of $\PSiL_3(4)$ inside $\PGaL_3(4)$ and these correspond to the stabilisers of each of the three lines through $\ell$ in Figure~\ref{fanoconfig}. The equalities 
 \[
  \langle\P,\Phi_1\rangle^\perp=\langle\P,\Delta_1,\Phi_2,\Phi_3\rangle \quad \text{and} \quad \langle\P,\Delta_1\rangle^\perp=\langle\L,\Delta_1\rangle,
 \]
 among the four codes that are invariant under the same copy of $\PSiL_3(4)$, can be seen from the fact that $\langle\P,\Delta_1\rangle$ is contained in three pairwise inequivalent codes of dimension $11$ but $\langle\P,\Phi_1\rangle$ is contained in a pair of equivalent codes of dimension $11$, whilst $\langle\L,\Delta_1\rangle$ contains three inequivalent codes of dimension $10$ but $\langle\P,\Delta_1,\Phi_2,\Phi_3\rangle$ contains a pair of equivalent codes of dimension $10$.
 
 Now, the set of all lines of $\pg_2(4)$ form the set of weight $5$ codewords of $\P^\perp$; the set of all hyperovals of $\pg_2(4)$ form the set of weight $6$ codewords of $\P^\perp$; and the set of all Fano subplanes of $\pg_2(4)$ form the set of weight $7$ codewords of $\P^\perp$. Thus, the codes $\langle\L,\Delta_1\rangle$ and $\L$ both have minimum distance $5$; the codes $\langle\P,\Phi_1\rangle^\perp$, $\L^\perp$ and $\langle\P,\Delta_1\rangle$ have minimum distance $6$; and $\langle\P,\Phi_1\rangle$ has minimum distance $7$. The covering radii were determined via computation in the GAP package GUAVA \cite{cramwinckelgap}.
\end{proof}

\begin{lemma}\label{psl34CT}
 Suppose $m=21$ and Hypothesis~\ref{hyp2} holds with $\soc(X/K)\cong\PSL_3(4)$. If $C=C_{\text{max}}$, then $C$ is equivalent to one of the following codes in $H(21,2)$, each of which is completely transitive:
 \begin{itemize}
  \item The $[21,12,5;3]$ code $\P^\perp$,
  \item The $[21,11,5;6]$ code $\langle\L,\Delta_1\rangle$, or,
  \item The $[21,10,5;6]$ code $\L$.
 \end{itemize}
\end{lemma}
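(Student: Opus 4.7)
The starting point is Lemma~\ref{psl34cmax}, which lists eight candidates for $C$. My first step is to apply Lemma~\ref{upboundmindist}: the group $\PSL_3(4) \leq X_{\b 0}^M$ acts $2$-transitively on the $21$ points of $\pg_2(4)$, but since collinear and non-collinear triples form distinct orbits, even the full group $\PGaL_3(4)$ fails to be $3$-homogeneous. Hence $\delta \leq 2 \cdot 2 + 2 = 6$, eliminating the two candidates with $\delta \in \{7,8\}$, namely $\langle\P,\Phi_1\rangle$ and $\P$, and leaving six candidates to consider.

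Next I would eliminate the three candidates with $\delta = 6$: namely $\L^\perp$, $\langle\P,\Phi_1\rangle^\perp$, and $\langle\P,\Delta_1\rangle$. For each, the strategy is to apply \cite[Theorem~4.1]{sole1990completely} exactly as in the proof of Lemma~\ref{math22CT}: a completely regular code must have covering radius equal to its external distance, which is the number of non-zero weights in the dual code. Using the coset decomposition furnished by Figure~\ref{pslsubmodlattdiag}, the dual in each of the three cases is one of the codes from Lemma~\ref{psl34cmax} with $\delta = 5$, which contains lines (weight $5$), hyperovals (weight $6$), Fano subplanes (weight $7$, via Lemma~\ref{sumoflineandFanoplane}), and many further distinct weights arising from sums with elements of $\P$ (all of weight at least $8$). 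A short computation shows that the number of distinct non-zero weights in the dual strictly exceeds $\rho$ in each of these three cases, ruling out complete regularity and hence complete transitivity.

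Finally, I would verify complete transitivity of the three remaining codes $\P^\perp$, $\langle\L,\Delta_1\rangle$, and $\L$, each of minimum distance $5$. In all cases $T_C \leq \Aut(C)$ acts transitively on $C$, so I only need to show that $X_{\b 0}$ acts transitively on $C_i \cap \varGamma_i(\b 0)$ for each $1 \leq i \leq \rho$. The cases $i = 1, 2$ follow immediately from the $2$-transitivity of $X_{\b 0}^M$ on $M$. For $\P^\perp$ (covering radius $3$), the case $i = 3$ reduces to showing that $C_3 \cap \varGamma_3(\b 0)$ is exactly the set of triangles (non-collinear triples) of $\pg_2(4)$, since a collinear triple sits in $C_2$ (each is at distance $2$ from a weight-$5$ line), and $\PGaL_3(4)$ is transitive on triangles. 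For $\L$ and $\langle\L,\Delta_1\rangle$ (covering radius $6$), the remaining cases $3 \leq i \leq 6$ require a more delicate geometric analysis: one must characterize the weight-$i$ vertices at distance $i$ from $C$ in terms of their intersections with lines, hyperovals, and Fano subplanes, and then invoke transitivity of $\PGaL_3(4)$ (respectively $\PSiL_3(4)$) on the relevant configurations in $\pg_2(4)$.

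The main obstacle will be this last step: pinpointing the correct geometric invariant that distinguishes each distance class $C_i \cap \varGamma_i(\b 0)$, and matching it to a known orbit of $\PGaL_3(4)$ or $\PSiL_3(4)$ on subsets of $\pg_2(4)$ of each relevant size, is the delicate combinatorial heart of the argument in the covering-radius-$6$ cases.
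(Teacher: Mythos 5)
Your overall strategy coincides with the paper's: prune the eight candidates from Lemma~\ref{psl34cmax} by comparing covering radius with external distance, then verify the three survivors geometrically. Your opening move --- using Lemma~\ref{upboundmindist} to kill $\langle\P,\Phi_1\rangle$ and $\P$ outright because $\PGaL_3(4)$ is $2$- but not $3$-homogeneous, forcing $\delta\leq 6$ --- is correct and is a clean, computation-free alternative to the paper's weight-count argument for those two codes. However, your elimination of the three $\delta=6$ candidates rests on a false description of their duals. The dual of $\L^\perp$ is $\L$, which contains every line but \emph{no} hyperovals and \emph{no} Fano subplanes (those lie in other cosets of $\P$ inside $\P^\perp$; indeed the hyperovals and Fano subplanes are precisely the weight-$6$ and weight-$7$ codewords of $\P^\perp$, and $\L$ contains none of them). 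Likewise the dual of $\langle\P,\Phi_1\rangle^\perp$ is $\langle\P,\Phi_1\rangle$, which has minimum distance $7$ and contains neither lines nor hyperovals. Only $\langle\L,\Delta_1\rangle=\langle\P,\Delta_1\rangle^\perp$ contains all three types of object. So the ``short computation'' you sketch would not produce the weight lists you claim in two of the three cases. The conclusions are nevertheless true (the relevant duals have $7$, $6$ and $13$ non-zero weights against covering radii $5$, $5$ and $7$), but the paper obtains these counts by explicit computation in GUAVA; you would need to do the same, or count the weights of $\P$ and of the single relevant coset of $\P$ directly.

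The more serious gap is the one you flag yourself: you defer the verification that $\L$ and $\langle\L,\Delta_1\rangle$ are completely transitive in degrees $3\leq i\leq 6$, which is the substantive content of the lemma. The missing identification is that, because both codes contain every line of $\pg_2(4)$ and have minimum distance $5$, a weight-$i$ vertex with $i\leq 6$ containing three collinear points is within distance $i-1$ of a line, so $\varGamma_i({\b 0})\cap C_i$ consists only of arcs: triangles, quadrangles, ovals and hyperovals for $i=3,4,5,6$. For $\L$ one then quotes transitivity of $\PGaL_3(4)$ on each of these classes. For $\langle\L,\Delta_1\rangle$ the classes for $i=4,5,6$ each split into $\PSL_3(4)$-orbits according to the orbit $\H_j$ of the unique hyperoval containing the arc (arcs belonging to $\H_1$, or to Fano subplanes in $\mathcal{F}_1$, being closer to a codeword and hence excluded); the two surviving orbits in each degree are fused by the field automorphism inside $X_{\b 0}\cong\PSiL_3(4)$, which is exactly where Lemma~\ref{symdiffhyperovals} enters. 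Without this analysis the proof is incomplete.
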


\begin{proof}
 The possibilities for $C_{\text{max}}$ are given in Lemma~\ref{psl34cmax}. Using the GAP package GUAVA \cite{cramwinckelgap} to compare the covering radius of a code with the number of non-zero weights of its dual, which for a completely regular code must be equal by \cite[Theorem~4 (iii)]{borges2019completely}, allows many of the possibilities to be eliminated. The covering radius of $\langle\P,\Phi_1\rangle^\perp$ is $5$, but $\langle\P,\Phi_1\rangle$ has $6$ non-zero weights, so $\langle\P,\Phi_1\rangle^\perp$ is not completely regular, and hence not completely transitive. The covering radius of $\L^\perp$ is $5$, but $\L$ has $7$ non-zero weights, so $\L^\perp$ is not completely transitive. The covering radius of $\langle\P,\Delta_1\rangle$ is $7$, but $\langle\P,\Delta_1\rangle^\perp=\langle\L,\Delta_1\rangle$ has $13$ non-zero weights, so $\langle\P,\Delta_1\rangle$ is not completely transitive. The covering radius of $\langle\P,\Phi_1\rangle$ is $6$, but $\langle\P,\Phi_1\rangle^\perp$ has $9$ non-zero weights, so $\langle\P,\Phi_1\rangle$ is not completely transitive. The covering radius of $\P$ is $7$, but $\P^\perp$ has $13$ non-zero weights, so $\P$ is not completely transitive.

 Now, $\P^\perp$ has covering radius $3$ and minimum distance $5$, so $\varGamma_3(\b 0)\cap \left(\P^\perp\right)_3$ consists of the set of all triangles of $\pg_2(4)$. As $\PSL_3(4)$ acts transitively on triangles we have that $\P^\perp$ is completely transitive.

 The code $\langle\L,\Delta_1\rangle$ has covering radius $6$ and contains all lines of $\pg_2(4)$ as well as one $\PSL_3(4)$-orbit of hyperovals and one $\PSL_3(4)$-orbit of Fano subplanes. Thus, $\varGamma_3(\b 0)\cap \langle\L,\Delta_1\rangle_3$ again consists of the set of all triangles of $\pg_2(4)$, on which $\PSL_3(4)$ acts transitively. Since each quadrangle (set of four points with the property that no $3$ are collinear) of $\pg_2(4)$ is contained in a unique hyperoval and a unique Fano subplane, $\varGamma_4(\b 0)\cap \langle\L,\Delta_1\rangle_4$ consists of two $\PSL_3(4)$-orbits of quadrangles. However, $X_{\b 0}\cong\PSiL_3(4)$ acts transitively on $\{\H_2,\H_3\}$. Hence $\PSiL_3(4)$ acts transitively on $\varGamma_4(\b 0)\cap \langle\L,\Delta_1\rangle_4$. Similarly, two $\PSL_3(4)$-orbits of ovals of $\pg_2(4)$ make up $\varGamma_5(\b 0)\cap \langle\L,\Delta_1\rangle_5$. Again, these orbits are interchanged under the field automorphism of $\F_4$, and hence $\PSiL_3(4)$ acts transitively on $\varGamma_5(\b 0)\cap \langle\L,\Delta_1\rangle_5$. Moreover, the remaining two $\PSL_3(4)$-orbits of hyperovals of $\pg_2(4)$ make up $\varGamma_6(\b 0)\cap \langle\L,\Delta_1\rangle_6$, these orbits are again interchanged under the field automorphism of $\F_4$, so that $\PSiL_3(4)$ acts transitively on $\varGamma_6(\b 0)\cap \langle\L,\Delta_1\rangle_6$. It follows that $\P(\Delta)^\perp$ is completely transitive.

 Finally, $\L$ has covering radius $6$ and contains all lines of $\pg_2(4)$. Now $\PGaL_3(4)$ acts transitively on the quadrangles, ovals and hyperovals of $\pg_2(4)$, respectively, which make up $\varGamma_4(\b 0)\cap \L_4$, $\varGamma_5(\b 0)\cap \L_5$ and $\varGamma_6(\b 0)\cap \L_6$, respectively. Since $\L$ is also invariant under $\PGaL_3(4)$ acting on $M$ it follows that $\L$ is completely transitive.
\end{proof}

\begin{figure}
 \[
\begin{tikzpicture}[x=0.75pt,y=0.75pt,yscale=-0.5,xscale=0.7]

\draw    (26,40) -- (26,88) ;
\draw [shift={(26,90)}, rotate = 270] [color={rgb, 255:red, 0; green, 0; blue, 0 }  ][line width=0.75]    (10.93,-3.29) .. controls (6.95,-1.4) and (3.31,-0.3) .. (0,0) .. controls (3.31,0.3) and (6.95,1.4) .. (10.93,3.29)   ;
\draw [shift={(26,40)}, rotate = 270] [color={rgb, 255:red, 0; green, 0; blue, 0 }  ][line width=0.75]    (0,5.59) -- (0,-5.59)   ;
\draw    (61,105) -- (99,105) ;
\draw [shift={(101,105)}, rotate = 180] [color={rgb, 255:red, 0; green, 0; blue, 0 }  ][line width=0.75]    (10.93,-3.29) .. controls (6.95,-1.4) and (3.31,-0.3) .. (0,0) .. controls (3.31,0.3) and (6.95,1.4) .. (10.93,3.29)   ;
\draw [shift={(61,105)}, rotate = 180] [color={rgb, 255:red, 0; green, 0; blue, 0 }  ][line width=0.75]    (0,5.59) -- (0,-5.59)   ;
\draw    (131,125) -- (131,173) ;
\draw [shift={(131,175)}, rotate = 270] [color={rgb, 255:red, 0; green, 0; blue, 0 }  ][line width=0.75]    (10.93,-3.29) .. controls (6.95,-1.4) and (3.31,-0.3) .. (0,0) .. controls (3.31,0.3) and (6.95,1.4) .. (10.93,3.29)   ;
\draw [shift={(131,125)}, rotate = 270] [color={rgb, 255:red, 0; green, 0; blue, 0 }  ][line width=0.75]    (0,5.59) -- (0,-5.59)   ;
\draw    (166,190) -- (204,190) ;
\draw [shift={(206,190)}, rotate = 180] [color={rgb, 255:red, 0; green, 0; blue, 0 }  ][line width=0.75]    (10.93,-3.29) .. controls (6.95,-1.4) and (3.31,-0.3) .. (0,0) .. controls (3.31,0.3) and (6.95,1.4) .. (10.93,3.29)   ;
\draw [shift={(166,190)}, rotate = 180] [color={rgb, 255:red, 0; green, 0; blue, 0 }  ][line width=0.75]    (0,5.59) -- (0,-5.59)   ;
\draw    (236,210) -- (236,258) ;
\draw [shift={(236,260)}, rotate = 270] [color={rgb, 255:red, 0; green, 0; blue, 0 }  ][line width=0.75]    (10.93,-3.29) .. controls (6.95,-1.4) and (3.31,-0.3) .. (0,0) .. controls (3.31,0.3) and (6.95,1.4) .. (10.93,3.29)   ;
\draw [shift={(236,210)}, rotate = 270] [color={rgb, 255:red, 0; green, 0; blue, 0 }  ][line width=0.75]    (0,5.59) -- (0,-5.59)   ;
\draw    (265.5,275) -- (308.5,275) ;
\draw [shift={(310.5,275)}, rotate = 180] [color={rgb, 255:red, 0; green, 0; blue, 0 }  ][line width=0.75]    (10.93,-3.29) .. controls (6.95,-1.4) and (3.31,-0.3) .. (0,0) .. controls (3.31,0.3) and (6.95,1.4) .. (10.93,3.29)   ;
\draw [shift={(265.5,275)}, rotate = 180] [color={rgb, 255:red, 0; green, 0; blue, 0 }  ][line width=0.75]    (0,5.59) -- (0,-5.59)   ;
\draw    (336,295) -- (336,343) ;
\draw [shift={(336,345)}, rotate = 270] [color={rgb, 255:red, 0; green, 0; blue, 0 }  ][line width=0.75]    (10.93,-3.29) .. controls (6.95,-1.4) and (3.31,-0.3) .. (0,0) .. controls (3.31,0.3) and (6.95,1.4) .. (10.93,3.29)   ;
\draw [shift={(336,295)}, rotate = 270] [color={rgb, 255:red, 0; green, 0; blue, 0 }  ][line width=0.75]    (0,5.59) -- (0,-5.59)   ;
\draw    (371,360) -- (409,360) ;
\draw [shift={(411,360)}, rotate = 180] [color={rgb, 255:red, 0; green, 0; blue, 0 }  ][line width=0.75]    (10.93,-3.29) .. controls (6.95,-1.4) and (3.31,-0.3) .. (0,0) .. controls (3.31,0.3) and (6.95,1.4) .. (10.93,3.29)   ;
\draw [shift={(371,360)}, rotate = 180] [color={rgb, 255:red, 0; green, 0; blue, 0 }  ][line width=0.75]    (0,5.59) -- (0,-5.59)   ;
\draw    (441,380) -- (441,428) ;
\draw [shift={(441,430)}, rotate = 270] [color={rgb, 255:red, 0; green, 0; blue, 0 }  ][line width=0.75]    (10.93,-3.29) .. controls (6.95,-1.4) and (3.31,-0.3) .. (0,0) .. controls (3.31,0.3) and (6.95,1.4) .. (10.93,3.29)   ;
\draw [shift={(441,380)}, rotate = 270] [color={rgb, 255:red, 0; green, 0; blue, 0 }  ][line width=0.75]    (0,5.59) -- (0,-5.59)   ;
\draw    (476,445) -- (514,445) ;
\draw [shift={(516,445)}, rotate = 180] [color={rgb, 255:red, 0; green, 0; blue, 0 }  ][line width=0.75]    (10.93,-3.29) .. controls (6.95,-1.4) and (3.31,-0.3) .. (0,0) .. controls (3.31,0.3) and (6.95,1.4) .. (10.93,3.29)   ;
\draw [shift={(476,445)}, rotate = 180] [color={rgb, 255:red, 0; green, 0; blue, 0 }  ][line width=0.75]    (0,5.59) -- (0,-5.59)   ;
\draw    (546,465) -- (546,513) ;
\draw [shift={(546,515)}, rotate = 270] [color={rgb, 255:red, 0; green, 0; blue, 0 }  ][line width=0.75]    (10.93,-3.29) .. controls (6.95,-1.4) and (3.31,-0.3) .. (0,0) .. controls (3.31,0.3) and (6.95,1.4) .. (10.93,3.29)   ;
\draw [shift={(546,465)}, rotate = 270] [color={rgb, 255:red, 0; green, 0; blue, 0 }  ][line width=0.75]    (0,5.59) -- (0,-5.59)   ;
\draw    (567,530) -- (620,530) ;
\draw [shift={(622,530)}, rotate = 180] [color={rgb, 255:red, 0; green, 0; blue, 0 }  ][line width=0.75]    (10.93,-3.29) .. controls (6.95,-1.4) and (3.31,-0.3) .. (0,0) .. controls (3.31,0.3) and (6.95,1.4) .. (10.93,3.29)   ;
\draw [shift={(567,530)}, rotate = 180] [color={rgb, 255:red, 0; green, 0; blue, 0 }  ][line width=0.75]    (0,5.59) -- (0,-5.59)   ;
\draw    (334.5,40) -- (334.5,88) ;
\draw [shift={(334.5,90)}, rotate = 270] [color={rgb, 255:red, 0; green, 0; blue, 0 }  ][line width=0.75]    (10.93,-3.29) .. controls (6.95,-1.4) and (3.31,-0.3) .. (0,0) .. controls (3.31,0.3) and (6.95,1.4) .. (10.93,3.29)   ;
\draw [shift={(334.5,40)}, rotate = 270] [color={rgb, 255:red, 0; green, 0; blue, 0 }  ][line width=0.75]    (0,5.59) -- (0,-5.59)   ;
\draw    (369.5,105) -- (407.5,105) ;
\draw [shift={(409.5,105)}, rotate = 180] [color={rgb, 255:red, 0; green, 0; blue, 0 }  ][line width=0.75]    (10.93,-3.29) .. controls (6.95,-1.4) and (3.31,-0.3) .. (0,0) .. controls (3.31,0.3) and (6.95,1.4) .. (10.93,3.29)   ;
\draw [shift={(369.5,105)}, rotate = 180] [color={rgb, 255:red, 0; green, 0; blue, 0 }  ][line width=0.75]    (0,5.59) -- (0,-5.59)   ;
\draw    (439.5,125) -- (439.5,173) ;
\draw [shift={(439.5,175)}, rotate = 270] [color={rgb, 255:red, 0; green, 0; blue, 0 }  ][line width=0.75]    (10.93,-3.29) .. controls (6.95,-1.4) and (3.31,-0.3) .. (0,0) .. controls (3.31,0.3) and (6.95,1.4) .. (10.93,3.29)   ;
\draw [shift={(439.5,125)}, rotate = 270] [color={rgb, 255:red, 0; green, 0; blue, 0 }  ][line width=0.75]    (0,5.59) -- (0,-5.59)   ;
\draw    (474.5,190) -- (512.5,190) ;
\draw [shift={(514.5,190)}, rotate = 180] [color={rgb, 255:red, 0; green, 0; blue, 0 }  ][line width=0.75]    (10.93,-3.29) .. controls (6.95,-1.4) and (3.31,-0.3) .. (0,0) .. controls (3.31,0.3) and (6.95,1.4) .. (10.93,3.29)   ;
\draw [shift={(474.5,190)}, rotate = 180] [color={rgb, 255:red, 0; green, 0; blue, 0 }  ][line width=0.75]    (0,5.59) -- (0,-5.59)   ;
\draw    (544.5,210) -- (544.5,258) ;
\draw [shift={(544.5,260)}, rotate = 270] [color={rgb, 255:red, 0; green, 0; blue, 0 }  ][line width=0.75]    (10.93,-3.29) .. controls (6.95,-1.4) and (3.31,-0.3) .. (0,0) .. controls (3.31,0.3) and (6.95,1.4) .. (10.93,3.29)   ;
\draw [shift={(544.5,210)}, rotate = 270] [color={rgb, 255:red, 0; green, 0; blue, 0 }  ][line width=0.75]    (0,5.59) -- (0,-5.59)   ;
\draw    (565.5,275) -- (623.5,275) ;
\draw [shift={(625.5,275)}, rotate = 180] [color={rgb, 255:red, 0; green, 0; blue, 0 }  ][line width=0.75]    (10.93,-3.29) .. controls (6.95,-1.4) and (3.31,-0.3) .. (0,0) .. controls (3.31,0.3) and (6.95,1.4) .. (10.93,3.29)   ;
\draw [shift={(565.5,275)}, rotate = 180] [color={rgb, 255:red, 0; green, 0; blue, 0 }  ][line width=0.75]    (0,5.59) -- (0,-5.59)   ;
\draw    (574,210) -- (632.46,258.72) ;
\draw [shift={(634,260)}, rotate = 219.81] [color={rgb, 255:red, 0; green, 0; blue, 0 }  ][line width=0.75]    (10.93,-3.29) .. controls (6.95,-1.4) and (3.31,-0.3) .. (0,0) .. controls (3.31,0.3) and (6.95,1.4) .. (10.93,3.29)   ;
\draw [shift={(574,210)}, rotate = 219.81] [color={rgb, 255:red, 0; green, 0; blue, 0 }  ][line width=0.75]    (0,5.59) -- (0,-5.59)   ;
\draw    (165.5,125) -- (223.96,173.72) ;
\draw [shift={(225.5,175)}, rotate = 219.81] [color={rgb, 255:red, 0; green, 0; blue, 0 }  ][line width=0.75]    (10.93,-3.29) .. controls (6.95,-1.4) and (3.31,-0.3) .. (0,0) .. controls (3.31,0.3) and (6.95,1.4) .. (10.93,3.29)   ;
\draw [shift={(165.5,125)}, rotate = 219.81] [color={rgb, 255:red, 0; green, 0; blue, 0 }  ][line width=0.75]    (0,5.59) -- (0,-5.59)   ;
\draw    (270.5,210) -- (328.96,258.72) ;
\draw [shift={(330.5,260)}, rotate = 219.81] [color={rgb, 255:red, 0; green, 0; blue, 0 }  ][line width=0.75]    (10.93,-3.29) .. controls (6.95,-1.4) and (3.31,-0.3) .. (0,0) .. controls (3.31,0.3) and (6.95,1.4) .. (10.93,3.29)   ;
\draw [shift={(270.5,210)}, rotate = 219.81] [color={rgb, 255:red, 0; green, 0; blue, 0 }  ][line width=0.75]    (0,5.59) -- (0,-5.59)   ;
\draw    (365.5,290) -- (423.96,338.72) ;
\draw [shift={(425.5,340)}, rotate = 219.81] [color={rgb, 255:red, 0; green, 0; blue, 0 }  ][line width=0.75]    (10.93,-3.29) .. controls (6.95,-1.4) and (3.31,-0.3) .. (0,0) .. controls (3.31,0.3) and (6.95,1.4) .. (10.93,3.29)   ;
\draw [shift={(365.5,290)}, rotate = 219.81] [color={rgb, 255:red, 0; green, 0; blue, 0 }  ][line width=0.75]    (0,5.59) -- (0,-5.59)   ;
\draw    (475.5,380) -- (533.96,428.72) ;
\draw [shift={(535.5,430)}, rotate = 219.81] [color={rgb, 255:red, 0; green, 0; blue, 0 }  ][line width=0.75]    (10.93,-3.29) .. controls (6.95,-1.4) and (3.31,-0.3) .. (0,0) .. controls (3.31,0.3) and (6.95,1.4) .. (10.93,3.29)   ;
\draw [shift={(475.5,380)}, rotate = 219.81] [color={rgb, 255:red, 0; green, 0; blue, 0 }  ][line width=0.75]    (0,5.59) -- (0,-5.59)   ;
\draw    (575.5,465) -- (633.96,513.72) ;
\draw [shift={(635.5,515)}, rotate = 219.81] [color={rgb, 255:red, 0; green, 0; blue, 0 }  ][line width=0.75]    (10.93,-3.29) .. controls (6.95,-1.4) and (3.31,-0.3) .. (0,0) .. controls (3.31,0.3) and (6.95,1.4) .. (10.93,3.29)   ;
\draw [shift={(575.5,465)}, rotate = 219.81] [color={rgb, 255:red, 0; green, 0; blue, 0 }  ][line width=0.75]    (0,5.59) -- (0,-5.59)   ;
\draw    (359,35) -- (417.46,83.72) ;
\draw [shift={(419,85)}, rotate = 219.81] [color={rgb, 255:red, 0; green, 0; blue, 0 }  ][line width=0.75]    (10.93,-3.29) .. controls (6.95,-1.4) and (3.31,-0.3) .. (0,0) .. controls (3.31,0.3) and (6.95,1.4) .. (10.93,3.29)   ;
\draw [shift={(359,35)}, rotate = 219.81] [color={rgb, 255:red, 0; green, 0; blue, 0 }  ][line width=0.75]    (0,5.59) -- (0,-5.59)   ;
\draw    (474,125) -- (532.46,173.72) ;
\draw [shift={(534,175)}, rotate = 219.81] [color={rgb, 255:red, 0; green, 0; blue, 0 }  ][line width=0.75]    (10.93,-3.29) .. controls (6.95,-1.4) and (3.31,-0.3) .. (0,0) .. controls (3.31,0.3) and (6.95,1.4) .. (10.93,3.29)   ;
\draw [shift={(474,125)}, rotate = 219.81] [color={rgb, 255:red, 0; green, 0; blue, 0 }  ][line width=0.75]    (0,5.59) -- (0,-5.59)   ;
\draw    (50.5,35) -- (108.96,83.72) ;
\draw [shift={(110.5,85)}, rotate = 219.81] [color={rgb, 255:red, 0; green, 0; blue, 0 }  ][line width=0.75]    (10.93,-3.29) .. controls (6.95,-1.4) and (3.31,-0.3) .. (0,0) .. controls (3.31,0.3) and (6.95,1.4) .. (10.93,3.29)   ;
\draw [shift={(50.5,35)}, rotate = 219.81] [color={rgb, 255:red, 0; green, 0; blue, 0 }  ][line width=0.75]    (0,5.59) -- (0,-5.59)   ;
\draw    (25.5,295) -- (25.5,343) ;
\draw [shift={(25.5,345)}, rotate = 270] [color={rgb, 255:red, 0; green, 0; blue, 0 }  ][line width=0.75]    (10.93,-3.29) .. controls (6.95,-1.4) and (3.31,-0.3) .. (0,0) .. controls (3.31,0.3) and (6.95,1.4) .. (10.93,3.29)   ;
\draw [shift={(25.5,295)}, rotate = 270] [color={rgb, 255:red, 0; green, 0; blue, 0 }  ][line width=0.75]    (0,5.59) -- (0,-5.59)   ;
\draw    (60.5,360) -- (98.5,360) ;
\draw [shift={(100.5,360)}, rotate = 180] [color={rgb, 255:red, 0; green, 0; blue, 0 }  ][line width=0.75]    (10.93,-3.29) .. controls (6.95,-1.4) and (3.31,-0.3) .. (0,0) .. controls (3.31,0.3) and (6.95,1.4) .. (10.93,3.29)   ;
\draw [shift={(60.5,360)}, rotate = 180] [color={rgb, 255:red, 0; green, 0; blue, 0 }  ][line width=0.75]    (0,5.59) -- (0,-5.59)   ;
\draw    (50,290) -- (108.46,338.72) ;
\draw [shift={(110,340)}, rotate = 219.81] [color={rgb, 255:red, 0; green, 0; blue, 0 }  ][line width=0.75]    (10.93,-3.29) .. controls (6.95,-1.4) and (3.31,-0.3) .. (0,0) .. controls (3.31,0.3) and (6.95,1.4) .. (10.93,3.29)   ;
\draw [shift={(50,290)}, rotate = 219.81] [color={rgb, 255:red, 0; green, 0; blue, 0 }  ][line width=0.75]    (0,5.59) -- (0,-5.59)   ;
\draw    (440.5,635) -- (440.5,683) ;
\draw [shift={(440.5,685)}, rotate = 270] [color={rgb, 255:red, 0; green, 0; blue, 0 }  ][line width=0.75]    (10.93,-3.29) .. controls (6.95,-1.4) and (3.31,-0.3) .. (0,0) .. controls (3.31,0.3) and (6.95,1.4) .. (10.93,3.29)   ;
\draw [shift={(440.5,635)}, rotate = 270] [color={rgb, 255:red, 0; green, 0; blue, 0 }  ][line width=0.75]    (0,5.59) -- (0,-5.59)   ;
\draw    (475.5,700) -- (513.5,700) ;
\draw [shift={(515.5,700)}, rotate = 180] [color={rgb, 255:red, 0; green, 0; blue, 0 }  ][line width=0.75]    (10.93,-3.29) .. controls (6.95,-1.4) and (3.31,-0.3) .. (0,0) .. controls (3.31,0.3) and (6.95,1.4) .. (10.93,3.29)   ;
\draw [shift={(475.5,700)}, rotate = 180] [color={rgb, 255:red, 0; green, 0; blue, 0 }  ][line width=0.75]    (0,5.59) -- (0,-5.59)   ;
\draw    (545.5,720) -- (545.5,768) ;
\draw [shift={(545.5,770)}, rotate = 270] [color={rgb, 255:red, 0; green, 0; blue, 0 }  ][line width=0.75]    (10.93,-3.29) .. controls (6.95,-1.4) and (3.31,-0.3) .. (0,0) .. controls (3.31,0.3) and (6.95,1.4) .. (10.93,3.29)   ;
\draw [shift={(545.5,720)}, rotate = 270] [color={rgb, 255:red, 0; green, 0; blue, 0 }  ][line width=0.75]    (0,5.59) -- (0,-5.59)   ;
\draw    (580.5,785) -- (613,785) ;
\draw [shift={(615,785)}, rotate = 180] [color={rgb, 255:red, 0; green, 0; blue, 0 }  ][line width=0.75]    (10.93,-3.29) .. controls (6.95,-1.4) and (3.31,-0.3) .. (0,0) .. controls (3.31,0.3) and (6.95,1.4) .. (10.93,3.29)   ;
\draw [shift={(580.5,785)}, rotate = 180] [color={rgb, 255:red, 0; green, 0; blue, 0 }  ][line width=0.75]    (0,5.59) -- (0,-5.59)   ;
\draw    (26,550) -- (26,598) ;
\draw [shift={(26,600)}, rotate = 270] [color={rgb, 255:red, 0; green, 0; blue, 0 }  ][line width=0.75]    (10.93,-3.29) .. controls (6.95,-1.4) and (3.31,-0.3) .. (0,0) .. controls (3.31,0.3) and (6.95,1.4) .. (10.93,3.29)   ;
\draw [shift={(26,550)}, rotate = 270] [color={rgb, 255:red, 0; green, 0; blue, 0 }  ][line width=0.75]    (0,5.59) -- (0,-5.59)   ;
\draw    (61,615) -- (99,615) ;
\draw [shift={(101,615)}, rotate = 180] [color={rgb, 255:red, 0; green, 0; blue, 0 }  ][line width=0.75]    (10.93,-3.29) .. controls (6.95,-1.4) and (3.31,-0.3) .. (0,0) .. controls (3.31,0.3) and (6.95,1.4) .. (10.93,3.29)   ;
\draw [shift={(61,615)}, rotate = 180] [color={rgb, 255:red, 0; green, 0; blue, 0 }  ][line width=0.75]    (0,5.59) -- (0,-5.59)   ;
\draw    (131,635) -- (131,683) ;
\draw [shift={(131,685)}, rotate = 270] [color={rgb, 255:red, 0; green, 0; blue, 0 }  ][line width=0.75]    (10.93,-3.29) .. controls (6.95,-1.4) and (3.31,-0.3) .. (0,0) .. controls (3.31,0.3) and (6.95,1.4) .. (10.93,3.29)   ;
\draw [shift={(131,635)}, rotate = 270] [color={rgb, 255:red, 0; green, 0; blue, 0 }  ][line width=0.75]    (0,5.59) -- (0,-5.59)   ;
\draw    (166,700) -- (204,700) ;
\draw [shift={(206,700)}, rotate = 180] [color={rgb, 255:red, 0; green, 0; blue, 0 }  ][line width=0.75]    (10.93,-3.29) .. controls (6.95,-1.4) and (3.31,-0.3) .. (0,0) .. controls (3.31,0.3) and (6.95,1.4) .. (10.93,3.29)   ;
\draw [shift={(166,700)}, rotate = 180] [color={rgb, 255:red, 0; green, 0; blue, 0 }  ][line width=0.75]    (0,5.59) -- (0,-5.59)   ;
\draw    (580,720) -- (638.46,768.72) ;
\draw [shift={(640,770)}, rotate = 219.81] [color={rgb, 255:red, 0; green, 0; blue, 0 }  ][line width=0.75]    (10.93,-3.29) .. controls (6.95,-1.4) and (3.31,-0.3) .. (0,0) .. controls (3.31,0.3) and (6.95,1.4) .. (10.93,3.29)   ;
\draw [shift={(580,720)}, rotate = 219.81] [color={rgb, 255:red, 0; green, 0; blue, 0 }  ][line width=0.75]    (0,5.59) -- (0,-5.59)   ;
\draw    (55.5,545) -- (113.96,593.72) ;
\draw [shift={(115.5,595)}, rotate = 219.81] [color={rgb, 255:red, 0; green, 0; blue, 0 }  ][line width=0.75]    (10.93,-3.29) .. controls (6.95,-1.4) and (3.31,-0.3) .. (0,0) .. controls (3.31,0.3) and (6.95,1.4) .. (10.93,3.29)   ;
\draw [shift={(55.5,545)}, rotate = 219.81] [color={rgb, 255:red, 0; green, 0; blue, 0 }  ][line width=0.75]    (0,5.59) -- (0,-5.59)   ;
\draw    (165.5,635) -- (223.96,683.72) ;
\draw [shift={(225.5,685)}, rotate = 219.81] [color={rgb, 255:red, 0; green, 0; blue, 0 }  ][line width=0.75]    (10.93,-3.29) .. controls (6.95,-1.4) and (3.31,-0.3) .. (0,0) .. controls (3.31,0.3) and (6.95,1.4) .. (10.93,3.29)   ;
\draw [shift={(165.5,635)}, rotate = 219.81] [color={rgb, 255:red, 0; green, 0; blue, 0 }  ][line width=0.75]    (0,5.59) -- (0,-5.59)   ;
\draw    (465,630) -- (523.46,678.72) ;
\draw [shift={(525,680)}, rotate = 219.81] [color={rgb, 255:red, 0; green, 0; blue, 0 }  ][line width=0.75]    (10.93,-3.29) .. controls (6.95,-1.4) and (3.31,-0.3) .. (0,0) .. controls (3.31,0.3) and (6.95,1.4) .. (10.93,3.29)   ;
\draw [shift={(465,630)}, rotate = 219.81] [color={rgb, 255:red, 0; green, 0; blue, 0 }  ][line width=0.75]    (0,5.59) -- (0,-5.59)   ;
\draw    (236.5,465) -- (236.5,513) ;
\draw [shift={(236.5,515)}, rotate = 270] [color={rgb, 255:red, 0; green, 0; blue, 0 }  ][line width=0.75]    (10.93,-3.29) .. controls (6.95,-1.4) and (3.31,-0.3) .. (0,0) .. controls (3.31,0.3) and (6.95,1.4) .. (10.93,3.29)   ;
\draw [shift={(236.5,465)}, rotate = 270] [color={rgb, 255:red, 0; green, 0; blue, 0 }  ][line width=0.75]    (0,5.59) -- (0,-5.59)   ;
\draw    (271.5,530) -- (309.5,530) ;
\draw [shift={(311.5,530)}, rotate = 180] [color={rgb, 255:red, 0; green, 0; blue, 0 }  ][line width=0.75]    (10.93,-3.29) .. controls (6.95,-1.4) and (3.31,-0.3) .. (0,0) .. controls (3.31,0.3) and (6.95,1.4) .. (10.93,3.29)   ;
\draw [shift={(271.5,530)}, rotate = 180] [color={rgb, 255:red, 0; green, 0; blue, 0 }  ][line width=0.75]    (0,5.59) -- (0,-5.59)   ;
\draw    (261,460) -- (319.46,508.72) ;
\draw [shift={(321,510)}, rotate = 219.81] [color={rgb, 255:red, 0; green, 0; blue, 0 }  ][line width=0.75]    (10.93,-3.29) .. controls (6.95,-1.4) and (3.31,-0.3) .. (0,0) .. controls (3.31,0.3) and (6.95,1.4) .. (10.93,3.29)   ;
\draw [shift={(261,460)}, rotate = 219.81] [color={rgb, 255:red, 0; green, 0; blue, 0 }  ][line width=0.75]    (0,5.59) -- (0,-5.59)   ;
\draw    (270.5,785) -- (308.5,785) ;
\draw [shift={(310.5,785)}, rotate = 180] [color={rgb, 255:red, 0; green, 0; blue, 0 }  ][line width=0.75]    (10.93,-3.29) .. controls (6.95,-1.4) and (3.31,-0.3) .. (0,0) .. controls (3.31,0.3) and (6.95,1.4) .. (10.93,3.29)   ;
\draw [shift={(270.5,785)}, rotate = 180] [color={rgb, 255:red, 0; green, 0; blue, 0 }  ][line width=0.75]    (0,5.59) -- (0,-5.59)   ;
\draw    (235.5,720) -- (235.5,768) ;
\draw [shift={(235.5,770)}, rotate = 270] [color={rgb, 255:red, 0; green, 0; blue, 0 }  ][line width=0.75]    (10.93,-3.29) .. controls (6.95,-1.4) and (3.31,-0.3) .. (0,0) .. controls (3.31,0.3) and (6.95,1.4) .. (10.93,3.29)   ;
\draw [shift={(235.5,720)}, rotate = 270] [color={rgb, 255:red, 0; green, 0; blue, 0 }  ][line width=0.75]    (0,5.59) -- (0,-5.59)   ;
\draw    (265,720) -- (323.46,768.72) ;
\draw [shift={(325,770)}, rotate = 219.81] [color={rgb, 255:red, 0; green, 0; blue, 0 }  ][line width=0.75]    (10.93,-3.29) .. controls (6.95,-1.4) and (3.31,-0.3) .. (0,0) .. controls (3.31,0.3) and (6.95,1.4) .. (10.93,3.29)   ;
\draw [shift={(265,720)}, rotate = 219.81] [color={rgb, 255:red, 0; green, 0; blue, 0 }  ][line width=0.75]    (0,5.59) -- (0,-5.59)   ;

\begin{tiny}
\draw (24,20) node   {$\mathcal{P}$};
\draw (30,107.5) node   {$\mathcal{P} +\Phi _{2}$};
\draw (135,107.5) node   {$\mathcal{P} +\Phi _{3}$};
\draw (78,90) node   {$\sigma $};
\draw (135,192.5) node   {$\mathcal{P} +\Delta _{1}$};
\draw (240,192.5) node   {$\mathcal{P} +\Delta _{2}$};
\draw (183,175) node   {$\sigma $};
\draw (12,60) node   {$t_{\Phi _{2}}$};
\draw (117,149) node   {$t_{\Phi _{2}}$};
\draw (222,234) node   {$t_{\Phi _{2}}$};
\draw (237,275) node   {$\mathcal{P} +\ell $};
\draw (337,275) node   {$\mathcal{P} +\ell $};
\draw (288,260) node   {$\sigma $};
\draw (343.5,362.5) node   {$\mathcal{P} +\Delta _{2}$};
\draw (445,362.5) node   {$\mathcal{P} +\Delta _{3}$};
\draw (388,345) node   {$\sigma $};
\draw (446,447.5) node   {$\mathcal{P} +\Phi _{1}$};
\draw (551,447.5) node   {$\mathcal{P} +\Phi _{2}$};
\draw (493,430) node   {$\sigma $};
\draw (325,315) node   {$t_{\Phi _{2}}$};
\draw (427,404) node   {$t_{\Phi _{2}}$};
\draw (532,489) node   {$t_{\Phi _{2}}$};
\draw (544,530) node   {$\mathcal{P}$};
\draw (598,515) node   {$\sigma $};
\draw (639,530) node   {$\mathcal{P}$};
\draw (332.5,20) node   {$\mathcal{P}$};
\draw (338,107.5) node   {$\mathcal{P} +\Delta _{2}$};
\draw (443.5,107.5) node   {$\mathcal{P} +\Delta _{3}$};
\draw (386.5,90) node   {$\sigma $};
\draw (443.5,192.5) node   {$\mathcal{P} +\Delta _{1}$};
\draw (548.5,192.5) node   {$\mathcal{P} +\Delta _{2}$};
\draw (491.5,175) node   {$\sigma $};
\draw (320.5,60) node   {$t_{\Delta _{2}}$};
\draw (425.5,149) node   {$t_{\Delta _{2}}$};
\draw (530.5,234) node   {$t_{\Delta _{2}}$};
\draw (542.5,275) node   {$\mathcal{P}$};
\draw (642.5,275) node   {$\mathcal{P}$};
\draw (596.5,260) node   {$\sigma $};
\draw (89.5,45) node   {$x_{\Phi }$};
\draw (201,137) node   {$x_{\Phi }$};
\draw (310,222) node   {$x_{\Phi }$};
\draw (405,303) node   {$x_{\Phi }$};
\draw (515,387) node   {$x_{\Phi }$};
\draw (615,472) node   {$x_{\Phi }$};
\draw (399.5,47) node   {$x_{\Delta }$};
\draw (509.5,132) node   {$x_{\Delta }$};
\draw (609.5,218) node   {$x_{\Delta }$};
\draw (26,277.5) node   {$\mathcal{P} +\Delta _{1}$};
\draw (29,362.5) node   {$\mathcal{P} +\Delta _{3}$};
\draw (134.5,362.5) node   {$\mathcal{P} +\Delta _{1}$};
\draw (77.5,345) node   {$\sigma $};
\draw (11.5,315) node   {$t_{\Delta _{2}}$};
\draw (90.5,302) node   {$x_{\Delta }$};
\draw (439,617.5) node   {$\mathcal{P} +\Delta _{1}$};
\draw (444.5,702.5) node   {$\mathcal{P} +\Phi _{3}$};
\draw (549.5,702.5) node   {$\mathcal{P} +\Phi _{1}$};
\draw (492.5,685) node   {$\sigma $};
\draw (549.5,787.5) node   {$\mathcal{P} +\Delta _{3}$};
\draw (644,787.5) node   {$\mathcal{P} +\Delta _{1}$};
\draw (597.5,770) node   {$\sigma $};
\draw (426.5,655) node   {$t_{\Phi _{2}}$};
\draw (531.5,744) node   {$t_{\Phi _{2}}$};
\draw (27,532.5) node   {$\mathcal{P} +\Phi _{2}$};
\draw (27,615) node   {$\mathcal{P} +\ell $};
\draw (132,615) node   {$\mathcal{P} +\ell $};
\draw (78,600) node   {$\sigma $};
\draw (136,702.5) node   {$\mathcal{P} +\Phi _{2}$};
\draw (241,702.5) node   {$\mathcal{P} +\Phi _{3}$};
\draw (183,685) node   {$\sigma $};
\draw (15,570) node   {$t_{\Delta _{2}}$};
\draw (117,659) node   {$t_{\Delta _{2}}$};
\draw (504,640) node   {$x_{\Phi }$};
\draw (615.5,732) node   {$x_{\Phi }$};
\draw (95,558) node   {$x_{\Delta }$};
\draw (205,642) node   {$x_{\Delta }$};
\draw (237,447.5) node   {$\mathcal{P} +\Phi _{1}$};
\draw (240,532.5) node   {$\mathcal{P} +\Phi _{3}$};
\draw (345.5,532.5) node   {$\mathcal{P} +\Phi _{1}$};
\draw (288.5,515) node   {$\sigma $};
\draw (222.5,485) node   {$t_{\Delta _{2}}$};
\draw (301.5,472) node   {$x_{\Delta }$};
\draw (239,787.5) node   {$\mathcal{P} +\Phi _{1}$};
\draw (344,787.5) node   {$\mathcal{P} +\Phi _{2}$};
\draw (287,770) node   {$\sigma $};
\draw (221.5,744) node   {$t_{\Delta _{2}}$};
\draw (300.5,728) node   {$x_{\Delta }$};
\end{tiny}

\end{tikzpicture}
 \]
 \caption{Images of the cosets of $\P$ in $\P^\perp$ under the induced action of $x_\Delta=t_{\Delta_2}\sigma$ and $x_\Phi=t_{\Phi_2}\sigma$ as in the proofs of Lemmas~\ref{ptimes32nt} and~\ref{nonlinearpsl34}.}
 \label{order3pic}
\end{figure}
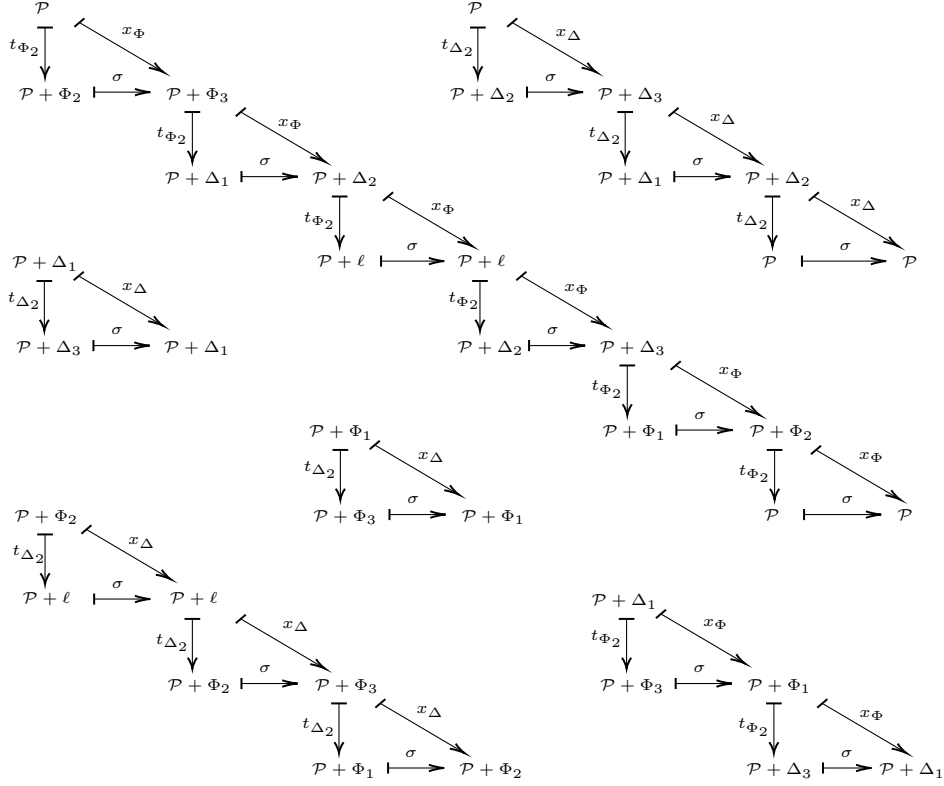

In the remainder of this section we consider $C\neq C_{\text{max}}$, introducing two new non-linear codes. One of these codes is $2$-neighbour-transitive, but not completely transitive, while the other is completely transitive.

\begin{lemma}\label{ptimes32nt}
 Let $C=\langle\P,\Delta_2\rangle\cup \langle\P,\Delta_3\rangle$. Then $C$ has parameters $(21,2^9\cdot 3,6;7)$ and is $2$-neighbour-transitive, but not $3$-neighbour-transitive, having automorphism group $\Aut(C)\cong T_{\P}\rtimes\PGaL_3(4)$ with $\Aut(C)_{\b 0}\cong \PSiL_3(4)$.
\end{lemma}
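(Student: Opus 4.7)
The plan is to identify $C$ as the union of the three distinct cosets $\P$, $\P+\Delta_2$, $\P+\Delta_3$ of $\P$ in $\P^\perp$, read off the combinatorial parameters using the Fano plane arrangement of Figure~\ref{fanoconfig}, determine $\Aut(C)$ directly, and then deduce the transitivity statements from the structure of $\Aut(C)_{\b 0}$.

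For the parameters, $\dim\P=9$ together with the distinctness of the three cosets gives $|C|=3\cdot 2^9$. Distances within a single coset of $\P$ are at least $8$, while the distance between codewords in different cosets equals the minimum weight of a sum coset $\P+\mu+\nu$ for distinct $\mu,\nu\in\{\b 0,\Delta_2,\Delta_3\}$. Since the line $\{\Delta_1,\Delta_2,\Delta_3\}$ in Figure~\ref{fanoconfig} gives $\Delta_2+\Delta_3\in\P+\Delta_1$, these sum cosets are $\P+\Delta_1$, $\P+\Delta_2$, $\P+\Delta_3$, each of minimum weight $6$ (attained by the hyperovals of $\H_i$), so $\delta=6$. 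For the covering radius, $\P\subseteq C$ gives $\rho(C)\leq\rho(\P)=7$, and I would verify that $\mu=\Phi_1$ realises the bound: by Lemma~\ref{sumoflineandFanoplane} and Figure~\ref{fanoconfig}, the elements $\Phi_1$, $\Phi_1+\Delta_2$ and $\Phi_1+\Delta_3$ lie in $\P+\Phi_1$, $\P+\Phi_3$ and $\P+\Phi_2$ respectively, each of minimum weight $7$ (attained by the Fano subplanes), so $d(\Phi_1,C)=7$ and $\rho=7$.

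To determine $\Aut(C)$, first observe $C_{\text{max}}=\P$: a translation $t_\alpha$ lies in $\Aut(C)$ only if $\alpha+\{\P,\P+\Delta_2,\P+\Delta_3\}=\{\P,\P+\Delta_2,\P+\Delta_3\}$, which fails for $\alpha\notin\P$ since, for instance, $t_{\Delta_2}$ sends $\P+\Delta_3$ to $\P+\Delta_1\not\subseteq C$. Consequently $\Aut(C)^M$ stabilises $\P$, so $\Aut(C)^M\leq\Aut(\P)^M=\PGaL_3(4)$, and any $\sigma\in\Aut(C)_{\b 0}$ must preserve $\{\H_2,\H_3\}$ setwise. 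By Lemma~\ref{symdiffhyperovals}, the stabiliser of $\{\H_2,\H_3\}$ in $\PGaL_3(4)$ is $\PSiL_3(4)$, and since $\PSiL_3(4)\leq\Aut(C)_{\b 0}$ conversely, we obtain $\Aut(C)_{\b 0}\cong\PSiL_3(4)$. To recover the full $\PGaL_3(4)$ on $M$, I would exhibit $x_\Delta=t_{\Delta_2}\sigma$ with $\sigma\in\PGL_3(4)\setminus\PSL_3(4)$ of order $3$ cycling $\H_2\to\H_1\to\H_3\to\H_2$; a direct calculation illustrated in Figure~\ref{order3pic}, using $\Delta_1+\Delta_2+\Delta_3\in\P$, shows that $x_\Delta$ cyclically permutes the three cosets of $\P$ in $C$. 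Therefore $x_\Delta\in\Aut(C)$ and $\Aut(C)\cong T_\P\rtimes\PGaL_3(4)$.

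Transitivity on $C$ now follows by orbit--stabiliser, as the orbit of $\b 0$ has size $|T_\P|\cdot[\PGaL_3(4):\PSiL_3(4)]=3\cdot 2^9=|C|$. Because $\delta=6\geq 5$, each vertex of $C_i$ for $i=1,2$ has a unique nearest codeword, and so transitivity on $C_i$ reduces to transitivity of $\Aut(C)_{\b 0}\cong\PSiL_3(4)$ on $\varGamma_i(\b 0)$, which holds by the $2$-transitivity of $\PSiL_3(4)$ on the $21$ points of $\pg_2(4)$. For the failure of $3$-neighbour-transitivity, the triangle inequality together with $\delta=6$ forces every $3$-subset of $M$ to lie in $C_3\cap\varGamma_3(\b 0)$, and since $\PSiL_3(4)$ is not $3$-homogeneous, its orbits on triangles and collinear triples of $\pg_2(4)$ remain distinct under $\Aut(C)$. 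The main technical obstacle is the construction of $x_\Delta$ and the verification that it cyclically permutes the three cosets, which combines Lemma~\ref{symdiffhyperovals}'s description of the $\s_3$-action of $\PGaL_3(4)/\PSL_3(4)$ on $\{\H_1,\H_2,\H_3\}$ with the identity $\Delta_1+\Delta_2+\Delta_3\in\P$ read off from the Fano plane arrangement.
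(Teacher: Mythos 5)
Your overall route is the same as the paper's: identify $C$ as the union of the three cosets $\P$, $\P+\Delta_2$, $\P+\Delta_3$, use Lemma~\ref{symdiffhyperovals} and the Fano configuration of cosets to pin down $\Aut(C)_{\b 0}\cong\PSiL_3(4)$, adjoin an element $x_\Delta=t_{\Delta_2}\sigma$ cycling the three cosets to get $\Aut(C)\cong T_\P\rtimes\PGaL_3(4)$, and deduce $2$-neighbour-transitivity from the $2$-transitivity of $\PSiL_3(4)$ on $M$. Your treatment of the parameters (minimum weights of the relevant cosets, and the verification that $d(\Phi_1,C)=7$ by locating $\Phi_1$, $\Phi_1+\Delta_2$, $\Phi_1+\Delta_3$ in the cosets $\P+\Phi_1$, $\P+\Phi_3$, $\P+\Phi_2$) is if anything more explicit than the paper's. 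One small correction: with the paper's convention that $x=h\sigma$ acts by $h$ first, the $\sigma$ you want induces $\H_1\to\H_2\to\H_3\to\H_1$; with the cycle $\H_2\to\H_1\to\H_3\to\H_2$ as you wrote it, $x_\Delta=t_{\Delta_2}\sigma$ sends $\P$ to $(\P+\Delta_2)^\sigma=\P+\Delta_1\not\subseteq C$. This is a trivially repairable slip, not a structural problem.

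The genuine gap is in the failure of $3$-neighbour-transitivity. From ``$\PSiL_3(4)$ is not $3$-homogeneous'' you conclude that its two orbits on weight-$3$ vertices (triangles and collinear triples) ``remain distinct under $\Aut(C)$,'' but this does not follow from the structure of $\Aut(C)_{\b 0}$ alone: an element $x=t_\beta\sigma\in\Aut(C)$ with $\beta\neq{\b 0}$ sends a weight-$3$ vertex $\alpha$ to $(\alpha+\beta)^\sigma$, which again has weight $3$ precisely when $\wt(\beta)=2|\alpha\cap\beta|$; since $\delta=6$ this forces $\wt(\beta)=6$ and $\alpha\subseteq\beta$, and a priori such an $x$ could carry a triangle to a collinear triple even though no element of $\Aut(C)_{\b 0}$ does. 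To close this you need the paper's extra observation: every weight-$6$ codeword of $C$ supported on the points of $\pg_2(4)$ is a hyperoval, so it contains no three collinear points; hence for $\alpha\subseteq\beta$ a triangle inside the hyperoval $\beta$, the image $(\beta\setminus\alpha)^\sigma$ is again a triangle, and no element of $\Aut(C)$ can merge the two orbits on $C_3\cap\varGamma_3({\b 0})$. Equivalently (as the paper phrases it), a hypothetical $x$ interchanging the two orbits would force ${\b 0}^x$ to be a weight-$6$ codeword that is the disjoint union of a triangle and a collinear triple, and $C$ has no such codeword. Without this step the ``not $3$-neighbour-transitive'' claim is unproved.
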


\begin{proof}
 Note that, by Lemma~\ref{pslsubmodlattdiag} and Figure~\ref{submodulelattice}, the code $\L^\perp=\langle \P,\Delta_1,\Delta_2,\Delta_3\rangle$ is the union of $\P$ and its three cosets $\P+\Delta_1$, $\P+\Delta_2$ and $\P+\Delta_3$. By Lemma~\ref{symdiffhyperovals}, we know that $G=\PGaL_3(4)$, as a subgroup of the top group $L$, induces an action of $\s_3$ on $\{\P+\Delta_1,\P+\Delta_2,\P+\Delta_3\}$ with kernel isomorphic to $\PSL_3(4)$. Hence the stabiliser of $\P+\Delta_1$ inside $G$ is isomorphic to $\PSiL_3(4)$, with the quotient $\PSiL_3(4)/\PSL_3(4)$ acting as $\s_2$ on $\{\P+\Delta_2,\P+\Delta_3\}$. Since the automorphism group of $\L^\perp$ is $T_{\L^\perp}\rtimes \PGaL_3(4)$, we have that $\Aut(C)_{\b 0}\leq\PGaL_3(4)$. Moreover, as there is no group $H$ such that the inclusions $\PSiL_3(4)<H<\PGaL_3(4)$ are proper, we have that $\Aut(C)_{\b 0}\cong \PSiL_3(4)$. Let $x_\Delta=t_{\Delta_2}\sigma\in\Aut(\varGamma)$, where $\sigma$ is an element of $\PGL_3(4)\leq L$ inducing the permutation:
 \[
  \P+\Delta_1\mapsto\P+\Delta_2\mapsto\P+\Delta_3\mapsto\P+\Delta_1.
 \]
 Then, applying Lemma~\ref{symdiffhyperovals} to find the images under $t_{\Delta_2}$, Figure~\ref{order3pic} gives the orbit of $\P$ under $\langle x_\Delta\rangle$, namely $\{\P,\P+\Delta_2,\P+\Delta_3\}$. Since $T_{\P}$ has three orbits on $C$, namely the sets $\P$, $\P+\Delta_2$ and $\P+\Delta_3$, we have that $\Aut(C)$ acts transitively on $C$. Observing that $\Aut(C)_{\b 0}\cong \PSiL_3(4)$ (as above), that the group $T_\P \rtimes \PSiL_3(4)$ acts transitively on $\P$, and that $|C|/|\P|=3$, it follows, from two applications of the orbit-stabiliser theorem, that $\Aut(C) = T_\P \rtimes \langle\PSiL_3(4), x_\Delta \rangle$. Since $\Aut(C)_{\b 0}$ is $2$-transitive on $M$ it follows that $C$ is $2$-neighbour-transitive. Since $\Phi_1\in\mathcal{F}_1$ satisfies $|\Delta\cap\Phi_1|\geq 4$ for all $\Delta\in\H_2\cup\H_3\subseteq C$, and hence $d(\Delta,\Phi_1)\geq 7$, we have that $\Phi_1\in C_7$ and $C$ has covering radius $7$.
 
 Finally, we prove that $\Aut(C)$ is not transitive on $C_3$. Since $\delta=6$ we have that $C_3$ contains all triangles and all sets of three collinear points of $\pg_2(4)$. Suppose there exists an element $x\in \Aut(C)$ where $x$ maps a triangle to a set of three collinear points. Since $x$ maps a weight $3$  vertex to another weight $3$ vertex, the image of ${\b 0}$ under $x$ has weight at most $6$. As $C$ has minimum distance $\delta=6$ we must have that ${\b 0}^x$ has weight $6$. Since $\PSiL_3(4)$ acting on $\pg_2(4)$ preserves triangles and sets of three collinear points, respectively, we then have that ${\b 0}^x$ is the sum of a triangle and a set of three collinear points disjoint from the triangle. There is no such codeword in $C$, because each weight $6$ codeword of $C$ that is a subset of the set of points of $\pg_2(4)$ is a hyperoval, and hence contains no subset of $3$ collinear points. Thus we have a contradiction and $C$ is not $3$-neighbour-transitive.
\end{proof}

\begin{lemma}\label{nonlinearpsl34}
 Suppose $m=21$ and Hypothesis~\ref{hyp2} holds with $\soc(X/K)\cong\PSL_3(4)$. If $C\neq C_{\text{max}}$ then, up to equivalence, $C=\langle\L,\Delta_2\rangle\cup \langle\L,\Delta_3\rangle$ with parameters $(21,2^{10}\cdot 3,5;6)$, which is indeed completely transitive with $\Aut(C)\cong T_{\L}\rtimes\PGaL_3(4)$ and $\Aut(C)_{\b 0}\cong \PSiL_3(4)$. 
\end{lemma}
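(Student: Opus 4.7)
The plan is to use Lemma~\ref{linearcompletion} to embed $C$ as a coset union inside a linear completely transitive code $\langle C \rangle$, restrict $\langle C \rangle$ and $C_{\text{max}}$ via the submodule lattice of Figure~\ref{pslsubmodlattdiag}, and then verify complete transitivity of the surviving candidate along the lines of Lemma~\ref{ptimes32nt}.

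Since $C \neq C_{\text{max}}$, Lemma~\ref{linearcompletion} gives that $\langle C \rangle$ is linear and completely transitive, that $C_{\text{max}}$ has codimension at least $2$ in $\langle C \rangle$, and that $\Aut(C) \leq \Aut(\langle C \rangle)$. Since $\soc(\Aut(\langle C \rangle)^M) \cong \PSL_3(4)$ is inherited from $C$, Lemma~\ref{psl34CT} gives $\langle C \rangle \in \{\P^\perp, \langle \L, \Delta_1 \rangle, \L\}$ up to equivalence. From Figure~\ref{pslsubmodlattdiag} the submodules of $\L$ have dimensions $0, 1, 9, 10$, ruling out $\langle C \rangle = \L$. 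For $\langle C \rangle = \langle \L, \Delta_1 \rangle$ the only admissible $C_{\text{max}}$ is $\P$; the three non-zero cosets of $\P$ in $\langle \L, \Delta_1 \rangle$ form the single Fano line $\{\ell, \Delta_1, \Phi_1\}$ of Figure~\ref{fanoconfig} and are pointwise fixed by the top group $\PSiL_3(4)$ of $\Aut(\langle \L, \Delta_1 \rangle)$; combined with the fact (derivable from Definition~\ref{maximallineardef}) that the translation part of $\Aut(C)$ is precisely $T_\P$, this forces $\Aut(C)$ to fix each coset of $\P$ in $\langle \L, \Delta_1 \rangle$ setwise and hence prevents transitivity of $\Aut(C)$ on the three cosets of $C$.

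So $\langle C \rangle = \P^\perp$. The same coset-fixing reasoning disposes of all sub-cases except $C_{\text{max}} = \L$: when $C_{\text{max}} \in \{\langle \P, \Delta_i \rangle, \langle \P, \Phi_i \rangle\}$, the top group is only $\PSiL_3(4)$ by Lemma~\ref{psl34cmax} and fixes each coset of $C_{\text{max}}$ in $\P^\perp$ setwise; when $C_{\text{max}} = \P$, the $\PGaL_3(4)$-orbits on the seven non-zero cosets of $\P$ in $\P^\perp$ are $\{\P + \ell\}$, $\{\P + \Delta_1, \P + \Delta_2, \P + \Delta_3\}$ and $\{\P + \Phi_1, \P + \Phi_2, \P + \Phi_3\}$ by Lemma~\ref{symdiffhyperovals}, and no union of these with $\P$ of size $k \cdot 2^9$ for $k \in \{3, 5, 6, 7\}$ yields a completely transitive code (each either collapses to a linear code or, after replacing $C$ by $\langle C \rangle$, brings us back to the already-excluded $\langle C \rangle = \langle \L, \Delta_1 \rangle$ case). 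For $C_{\text{max}} = \L$, the three non-zero cosets are $\L + \Delta_i = (\P + \Delta_i) \cup (\P + \Phi_i)$ (using $\Phi_i \equiv \ell + \Delta_i \pmod \P$ from Lemma~\ref{sumoflineandFanoplane}), on which $\PGaL_3(4)$ acts as $\s_3$ via Lemma~\ref{symdiffhyperovals}. Since $\b 0 \in C$ forces $\L \subseteq C$ and $|C|/|\L| = 3$, $C$ is the union of $\L$ with two of $\{\L + \Delta_1, \L + \Delta_2, \L + \Delta_3\}$, and all such are $\PGaL_3(4)$-equivalent to $C = \L \cup (\L + \Delta_2) \cup (\L + \Delta_3) = \langle \L, \Delta_2 \rangle \cup \langle \L, \Delta_3 \rangle$, as claimed.

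For complete transitivity I would follow Lemma~\ref{ptimes32nt}: the element $x_\Delta = t_{\Delta_2} \sigma$, with $\sigma \in \PGL_3(4) \leq L$ inducing $\P + \Delta_1 \mapsto \P + \Delta_2 \mapsto \P + \Delta_3 \mapsto \P + \Delta_1$, cycles $\L \to \L + \Delta_3 \to \L + \Delta_2 \to \L$ (the analogue of Figure~\ref{order3pic} with $\L$ replacing $\P$), and so preserves $C$; together with $T_\L \rtimes \PSiL_3(4)$ stabilising each coset of $\L$ in $C$, this yields $\Aut(C) = T_\L \rtimes \langle \PSiL_3(4), x_\Delta \rangle = T_\L \rtimes \PGaL_3(4)$ with $\Aut(C)_{\b 0} \cong \PSiL_3(4)$. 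Minimum distance $\delta = 5$ follows because the weight-$5$ codewords of $\P^\perp$ are exactly the lines of $\pg_2(4)$, all lying in $\L \subseteq C$; covering radius $\rho = 6$ is inherited from $\L$ via $\L \subseteq C \subseteq \P^\perp$. Transitivity of $\Aut(C)_{\b 0}$ on each $\varGamma_i(\b 0) \cap C_i$ for $1 \leq i \leq 6$ is established exactly as in the complete-transitivity proof for $\L$ in Lemma~\ref{psl34CT}, using $\PSiL_3(4)$-transitivity on triangles, quadrangles, ovals, and on the pair of $\PSL_3(4)$-orbits of hyperovals in $C$ that are swapped by the field automorphism inside $\PSiL_3(4)$. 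The main obstacle is the $C_{\text{max}} = \P$ sub-case, where the $5$-, $6$- and $7$-coset unions require careful bookkeeping of $\PGaL_3(4)$-orbits and of the induced linear span $\langle C \rangle$ in order to rule them out uniformly without appealing to an ad hoc argument for each cardinality.
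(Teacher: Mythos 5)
Your overall architecture (pass to $\langle C\rangle$ via Lemma~\ref{linearcompletion}, then case-analyse $C_{\text{max}}$ through the submodule lattice) differs from the paper, which instead works directly with the orbits of $\Aut(C)$ on cosets of $\P$; but your version has genuine gaps. The central missing ingredient is the numerical constraint from Proposition~\ref{xmaxlemma} part~5: under Hypothesis~\ref{hyp2} one has $|C|/|C_{\text{max}}|=|X^M:X_{\b 0}^M|$, which divides $|\PGaL_3(4):\PSL_3(4)|=6$, so together with Lemma~\ref{linearcompletion} part~3 the only possibilities are $|C|/|C_{\text{max}}|\in\{3,6\}$. You never derive this, and its absence bites twice. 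First, your invocation of Lemma~\ref{psl34CT} for $\langle C\rangle$ presupposes that $\langle C\rangle$ satisfies Hypothesis~\ref{hyp2} with socle $\PSL_3(4)$; the socle is \emph{not} ``inherited'' upward from $\Aut(C)\leq\Aut(\langle C\rangle)$, and a priori $\langle C\rangle$ could be $\langle{\b 1}\rangle^\perp$ or $V\varGamma$, whose top groups are all of $\s_{21}$. Ruling these out needs a bound on the codimension of $C_{\text{max}}$ in $\langle C\rangle$, which again comes from the bound on $|C|/|C_{\text{max}}|$. Second, you explicitly concede that the $C_{\text{max}}=\P$ sub-case with $5$-, $6$- and $7$-coset unions is unresolved; with $|C|/|C_{\text{max}}|\in\{3,6\}$ in hand this whole difficulty evaporates, and the paper's proof reduces to checking the two length-$3$ orbits (killed by Lemma~\ref{ptimes32nt}) and the single length-$6$ orbit on cosets of $\P$, read off from Figure~\ref{order3pic}.

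A secondary error: your ``coset-fixing'' argument is wrong as stated. An element $x=t_\beta\sigma\in\Aut(C)$ with $\beta={\b 0}^x\in C\setminus C_{\text{max}}$ does \emph{not} fix the coset $C_{\text{max}}$ setwise --- it sends $C_{\text{max}}+\gamma$ to $C_{\text{max}}+\gamma+\beta$. The correct conclusion in those sub-cases is that, because the relevant copy of $\PSiL_3(4)$ fixes every coset of $C_{\text{max}}$ in $\langle C\rangle$, the induced action of $\Aut(C)$ on $\langle C\rangle/C_{\text{max}}$ is purely by translations; transitivity of $\Aut(C)$ on the cosets comprising $C$ then forces $C/C_{\text{max}}$ to be a subgroup, i.e.\ $C$ linear, a contradiction. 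So those cases are indeed excluded, but not for the reason you give. Your identification of the surviving code as $\langle\L,\Delta_2\rangle\cup\langle\L,\Delta_3\rangle$ and your verification of its complete transitivity (via $x_\Delta=t_{\Delta_2}\sigma$, the covering radius computation, and $\PSiL_3(4)$-transitivity on triangles, quadrangles, ovals and the two swapped hyperoval orbits) match the paper and are fine.
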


\begin{proof}
 Suppose $C\neq C_{\text{max}}$ and $C$ is completely transitive. Note that $\PSL_3(4)$ has index $6$ in $\PGaL_3(4)$, so that, by Proposition~\ref{xmaxlemma}, $|C|/|C_{\text{max}}|$ divides $6$. Furthermore, by Lemma~\ref{linearcompletion}, we have that either $|C|=3|C_{\text{max}}|$ or $|C|=6|C_{\text{max}}|$. It follows that $X$ induces an orbit of either size $3$ or $6$ on some set of cosets of $\P$. By Lemma~\ref{symdiffhyperovals}, we know that $\PGaL_3(4)$, as a subgroup of the top group $L$, induces an action of $\s_3$ on the sets $\{\P+\Delta_i\mid i=1,2,3\}$ and $\{\P+\Phi_i\mid i=1,2,3\}$ with kernel $\PSL_3(4)$. Indeed, for each $i$, the stabiliser of $\P+\Delta_i$, which is equal to the stabiliser of $\P+\Phi_i$, is some copy of $\PSiL_3(4)$ that acts transitively on the sets $\{\P+\Delta_j\mid j\in\{1,2,3\};j\neq i\}$ and $\{\P+\Phi_j\mid j\in\{1,2,3\};j\neq i\}$. 
 
 Let $\sigma$ be an element of $\PGL_3(4)\leq L$ inducing the permutation:
 \[
  \P+\Delta_1\mapsto\P+\Delta_2\mapsto\P+\Delta_3\mapsto\P+\Delta_1.
 \] 
 Moreover, let $t_{\Delta_2}$ be the translation by $\Delta_2$, $t_{\Phi_2}$ be the translation by $\Phi_2$, $x_\Delta=t_{\Delta_2}\sigma$ and $x_\Phi= t_{\Phi_2}\sigma$. Figure~\ref{order3pic} gives the images of each of the cosets of $\P$ under $x_\Delta$ and $x_\Phi$. By the above considerations on $|C|/|C_{\text{max}}|$ we see that the relevant orbits are the two orbits of length $3$ under $\langle x_\Delta\rangle$ and the orbit of length $6$ under $\langle x_\Phi\rangle$. Individually, the length $3$ orbits correspond to the code $\langle\P,\Delta_2\rangle\cup \langle\P,\Delta_3\rangle$ and its translate by $\ell$. These codes are not completely transitive by Lemma~\ref{ptimes32nt}. However, their union is the same code given by the orbit of length $6$, namely $\langle\L,\Delta_2\rangle\cup \langle\L,\Delta_3\rangle$. Note that the actions of $\langle x_\Delta\rangle$ and $\langle x_\Phi\rangle$ cover, up to equivalence, all possibilities. Hence it remains to show that $\langle\L,\Delta_2\rangle\cup \langle\L,\Delta_3\rangle$ is completely transitive.

 Let $C=\langle\L,\Delta_2\rangle\cup \langle\L,\Delta_3\rangle$. Then $\Aut(C)=T_\L\rtimes \langle \PSiL_3(4), x_\Delta\rangle$ and, since $T_\L$ acts transitively on $\L$ and $\langle x_\Delta\rangle$ acts transitively on $\{\L,\L+\Delta_2,\L+\Delta_3\}$, it follows that $\Aut(C)$ acts transitively on $C$. Now $\langle\L,\Delta_2\rangle$ has covering radius $6$, and hence $C$ has covering radius $\rho\leq 6$. For each hyperoval $\Delta\in\H_2\cup\H_3$ we have that $d(\Delta,\Delta_1)\geq 6$, and hence $\rho=6$. Since $C$ contains all lines of $\pg_2(4)$, we have that $\varGamma_3({\b 0})\cap C_3$ is the set of triangles, $\varGamma_4({\b 0})\cap C_4$ is one $\PSL_3(4)$-orbit of quadrangles, $\varGamma_5({\b 0})\cap C_5$ is one $\PSL_3(4)$-orbit of ovals, and $\varGamma_6({\b 0})\cap C_6$ is one $\PSL_3(4)$-orbit of hyperovals of $\pg_2(4)$. It follows that $\Aut(C)_{{\b 0}}$ acts transitively on $\varGamma_i({\b 0})\cap C_i$ for each $i=1,\ldots,6$. Thus $C$ is completely transitive. 
\end{proof}

\section{Higman--Sims and Conway Groups}\label{sect:HSCo3}

Again, assume that Hypothesis~\ref{hyp2} holds. We note that both of the groups considered here are equal to their automorphism group. Proposition~\ref{xmaxlemma} then implies that $C=C_{\text{max}}$.

First, consider the case that $\soc(X/K)$ is the Higman--Sims group $\HS$. From the proof of \cite[Theorem~8.1]{Ivanov1993} we have that the $\F_2$-permutation module for the $2$-transitive action of $\HS$ is uniserial with structure $1\backslash 20\backslash 1\backslash 132\backslash 1\backslash 20\backslash 1$. Codes invariant under this action are constructed in \cite[Section~5]{moori2015higmansims}, and verified by our own GAP \cite{GAP4} computations. The codes constructed are a $[176,21,56]$ code, a $[176,22,50]$ code, a $[176,154,6]$ code and a $[176,155,6]$ code, each invariant under $\HS$. Since the permutation module is uniserial, these codes, along with the repetition code and its dual, give all proper submodules invariant under $\HS$. By Lemma~\ref{upboundmindist}, it follows that $C$ is either the $[176,154,6]$ code or the $[176,155,6]$ code. By \cite[Theorem~2 (ii)]{borges2019completely}, the weight $6$ codewords of $C$ form a $3$-$(176,6,\lambda)$ design, for some integer $\lambda$, and hence $|\varGamma_6({\b 0})\cap C|$ is divisible by
\[
 \frac{{176\choose 3}}{{6\choose 3}}=44660.
\]
However, the number of weight $6$ codewords of the $[176,154,6]$ code and the $[176,155,6]$ code are $92400$ and $129360$ respectively. Thus, these codes are not completely regular and hence also not completely transitive.

Turning to consider $\soc(X/K)$ isomorphic to the Conway group $\Co_3$, the proof of \cite[Theorem~8.1]{Ivanov1993} says that the $\F_2$-permutation module for the $2$-transitive action of $\Co_3$ is uniserial with structure $1\backslash 22\backslash 230\backslash 22\backslash 1$. A $[276,23,100]$ code invariant under $\Co_3$ is given in \cite{Haemers1993} with its dual, by computations in GAP \cite{GAP4}, being a $[276,253,6]$ code. Since the permutation module is uniserial, these codes, along with the repetition code and its dual, give all proper submodules invariant under $\Co_3$. By Lemma~\ref{upboundmindist}, it follows that $C$ is the $[276,253,6]$ code. By \cite[Theorem~2 (ii)]{borges2019completely}, the weight $6$ codewords of $C$ form a $3$-$(276,6,\lambda)$ design, for some integer $\lambda$, and hence $|\varGamma_6({\b 0})\cap C|$ is divisible by
\[
 \frac{{276\choose 3}}{{6\choose 3}}=173305.
\]
However, the number of weight $6$ codewords of the $[276,253,6]$ code is $708400$. Thus, this code is not completely regular and hence also not completely transitive. 

In summary:

\begin{lemma}\label{noHSCo}
 There are no codes satisfying Hypothesis~\ref{hyp2} with $\soc(X/K)=\HS$ or $\Co_3$.
\end{lemma}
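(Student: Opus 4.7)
The plan is to treat the two cases separately, but both following the same template: first determine all submodules of the permutation module invariant under the relevant group, then use the minimum distance bound of Lemma~\ref{upboundmindist} together with Hypothesis~\ref{hyp2} to isolate a short list of candidate codes, and finally rule each out by a divisibility obstruction coming from the design structure forced on the weight-$\delta$ codewords of a completely transitive (hence completely regular) code.

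For the $\HS$ case, I would start by invoking the fact that $\Aut(\HS)=\HS$, so by Proposition~\ref{xmaxlemma} part 4 we have $C=C_{\text{max}}$, and $C$ must be a linear code in $H(176,2)$ invariant under $\HS$. Citing the module structure $1\backslash 20\backslash 1\backslash 132\backslash 1\backslash 20\backslash 1$ from Ivanov's proof of \cite[Theorem~8.1]{Ivanov1993}, the list of proper invariant submodules other than the repetition code and its dual consists of the four codes with parameters $[176,21,56]$, $[176,22,50]$, $[176,154,6]$, $[176,155,6]$ (as constructed in \cite[Section~5]{moori2015higmansims}). Since Hypothesis~\ref{hyp2} forces $\delta\geq 5$ and the action of $\HS$ on $176$ points is $2$-transitive but not $3$-homogeneous, Lemma~\ref{upboundmindist} gives $\delta\leq 6$, eliminating the two high-distance codes and leaving only the $[176,154,6]$ and $[176,155,6]$ codes. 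For each of these, Lemma~\ref{design} (via \cite[Theorem~2~(ii)]{borges2019completely}) implies that the set of weight $6$ codewords forms a $3$-$(176,6,\lambda)$ design, so $|\varGamma_6({\b 0})\cap C|$ is divisible by $\binom{176}{3}/\binom{6}{3}=44660$. A direct count (verified in GAP) gives $92400$ and $129360$ weight-$6$ codewords respectively, neither divisible by $44660$, contradicting complete regularity.

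The $\Co_3$ case runs in parallel. Again $\Aut(\Co_3)=\Co_3$ gives $C=C_{\text{max}}$, and now the permutation module on $276$ points is uniserial with structure $1\backslash 22\backslash 230\backslash 22\backslash 1$, so the only invariant codes beyond the repetition code and its dual are the $[276,23,100]$ code of \cite{Haemers1993} and its $[276,253,6]$ dual. Lemma~\ref{upboundmindist}, applied using the fact that $\Co_3$ acts $2$-transitively but not $3$-homogeneously on $276$ points, again bounds $\delta\leq 6$, leaving only the $[276,253,6]$ code. The same design argument forces $\binom{276}{3}/\binom{6}{3}=173305$ to divide the number of weight-$6$ codewords, but that number equals $708400$, and since $173305\nmid 708400$ the code fails to be completely regular.

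The proof has no real obstacle: the heavy lifting has been done elsewhere (module structures from Ivanov, explicit codes from Moori--Rodrigues and Haemers). The only place where care is needed is in checking that in each case the kernel of the extension is the full translation group of $C$ (so that $C=C_{\text{max}}$ really is forced, rather than leaving room for a non-linear union of cosets), and that $\Co_3$ and $\HS$ are $2$-transitive but not $3$-homogeneous on the relevant point-sets so that Lemma~\ref{upboundmindist} applies with $t=2$. Both facts are standard and easily cited from the ATLAS.
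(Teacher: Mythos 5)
Your proposal follows essentially the same route as the paper's own proof: reduce to $C=C_{\text{max}}$ via Proposition~\ref{xmaxlemma}, read off the invariant submodules from the uniserial structure of the permutation modules (citing Ivanov--Praeger, Moori--Rodrigues and Haemers et al.), cut the list down with Lemma~\ref{upboundmindist}, and kill the surviving $[176,154,6]$, $[176,155,6]$ and $[276,253,6]$ codes by the $3$-design divisibility obstruction with exactly the same numbers $44660$, $92400$, $129360$, $173305$ and $708400$. The argument is correct as it stands.
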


\section{\texorpdfstring{$\PSL_2(r)$}{PSL(2,r)}}\label{sect:psl2r}

Suppose that Hypothesis~\ref{hyp2} holds, and that $\soc(X/K)\cong \PSL_2(r)$ where $m=r+1\geq 24$ with $r=p^t$ for some prime $p$ and integer $t$. It follows from \cite[Theorem~1]{kantor1972k} and Lemma~\ref{upboundmindist} that either:
\begin{enumerate}
  \item $r\equiv 3 \pmod{4}$, $\delta\leq 8$ and $X_{\b 0}^M$ is $3$-homogeneous, but not $4$-homogeneous; or,
  \item $r\equiv 1 \pmod{4}$, $\delta\leq 6$ and $X_{\b 0}^M$ is $2$-homogeneous, but not $3$-homogeneous.
\end{enumerate}

\begin{lemma}\label{psl2rtlessthan5}
 Suppose $m=r+1$, where $r=p^t$ with $p$ prime, and Hypothesis~\ref{hyp2} holds with $\soc(X/K)\cong\PSL_2(r)$. Then $C_{\text{max}}$ has dimension $\frac{r+1}{2}$ and $t\leq 4$.
\end{lemma}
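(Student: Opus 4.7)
The strategy is to exploit the submodule structure of the $\F_2$-permutation module for $\PSL_2(r)$ acting on $\pg_1(r) = M$, which we identify with $V\varGamma$. By Proposition~\ref{xmaxlemma} and Hypothesis~\ref{hyp2}, $C_{\text{max}}$ is an $\F_2 X_{\b 0}^M$-submodule and, since $\soc(X_{\b 0}^M) \cong \PSL_2(r)$, it is in particular an $\F_2\PSL_2(r)$-submodule. For $r$ odd with $r \equiv \pm 1 \pmod 8$, the classical description of this lattice (e.g.~via Mortimer's work on modular permutation representations of doubly transitive groups) shows that it contains the repetition code $\langle\b 1\rangle$, its dual $\langle\b 1\rangle^\perp$ (of minimum distance $2$), and a self-dual ``middle'' submodule $\Q$ of dimension $(r+1)/2$---the extended binary quadratic residue code when $t=1$, and its natural $\PSL_2(r)$-analogue when $t>1$. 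The constraints $2 \leq \dim(C_{\text{max}}) \leq m-2$ from Hypothesis~\ref{hyp1} and $\delta_{\text{max}} \geq 5$ from Theorem~\ref{binaryx2ntchar} then eliminate every candidate other than $\Q$, forcing $\dim(C_{\text{max}}) = (r+1)/2$.

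For the bound $t \leq 4$, I would combine the inequality $\delta \leq 8$ (from the opening discussion of this section via Lemma~\ref{upboundmindist}) with $|C|/|C_{\text{max}}| \leq |\mathrm{Out}(\PSL_2(r))| = 2t$ (from Proposition~\ref{xmaxlemma} part 5 and Hypothesis~\ref{hyp2}) and Lemma~\ref{largedeltamax}. In the regime $2t < (r+1)r/(\delta(\delta-1))$ this yields $\delta_{\text{max}} \leq 2\delta \leq 16$; the complementary regime is impossible for $r \geq 23$ since it would force $t$ to grow polynomially in $r$ while $r = p^t \geq 3^t$ grows exponentially in $t$. Applying the classical square-root bound $\delta_{\text{max}}^2 \geq r+1$ for $\Q$ then forces $r \leq 255$, and a finite check over prime powers $r = p^t$ with $p$ odd, $r \geq 23$ and $r \equiv \pm 1 \pmod 8$ satisfying $r \leq 255$ yields $t \leq 4$.

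The main obstacle I anticipate is establishing the square-root lower bound $\delta_{\text{max}}^2 \geq r+1$ for $\Q$ when $t > 1$, where $\Q$ is not literally an extended quadratic residue code. I would handle this either by invoking known generalisations of the square-root bound for $\PSL_2(r)$-invariant self-dual codes of half-dimension, or by a direct weight-enumeration argument that leverages the action of $\PSL_2(r)$ on the codewords of $\Q$ of any given small weight in order to rule out weights below $\sqrt{r+1}$. A secondary technical point is verifying that no other $\PSL_2(r)$-submodule of dimension between $2$ and $r-1$ has minimum distance at least $5$, which I expect follows from the same structural description of the permutation module cited above.
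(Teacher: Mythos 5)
Your argument tracks the paper's proof almost step for step. The dimension statement is read off from the submodule lattice of the $\F_2$-permutation module exactly as in the paper, which cites \cite[Lemma~5.4]{Ivanov1993} to say that $C_{\text{max}}$ must be one of two equivalent codes $U_1,U_2$ of dimension $(r+1)/2$; note these are self-dual only when $r\equiv 3\pmod 4$ and are duals of each other when $r\equiv 1\pmod 4$, so your ``single self-dual middle submodule $\Q$'' is a slight misdescription, though it does not affect the dimension count. The bound on $t$ is likewise obtained the same way: play $\delta\leq 8$, $|C|/|C_{\text{max}}|\leq |{\PGaL_2(r)}:{\PSL_2(r)}|=2t$ and Lemma~\ref{largedeltamax} off against a square-root lower bound on $\delta_{\text{max}}$, then finish with arithmetic on prime powers.

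The one genuine gap is the step you yourself flag: the square-root lower bound on $\delta_{\text{max}}$ for $t>1$, where the codes are not literally extended quadratic residue codes. You propose two strategies but carry out neither, and this is precisely the point where the paper leans on a citation: \cite[Lemma~4.4]{minimal2nt} gives $\delta_{\text{max}}\geq\sqrt r+1$, slightly stronger than your $\delta_{\text{max}}^2\geq r+1$. The difference in strength matters for the endgame. With the paper's bound, assuming $t\geq 5$ forces $r\geq 3^5=243$ and hence $\delta_{\text{max}}\geq 17>16\geq 2\delta$, an immediate contradiction needing no congruence condition. With your weaker bound, $r=243$ survives the numerics (it only gives $\delta_{\text{max}}\geq 16$, consistent with $\delta_{\text{max}}\leq 16$), and you must invoke $243\equiv 3\pmod 8$ against the condition $r\equiv\pm1\pmod 8$ of Table~\ref{binarytable} to eliminate it; you do this correctly, but it makes your argument depend on that congruence where the paper's does not. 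In short: same route, logically sound at every step you actually execute, but the key analytic input is left as an acknowledged placeholder that should simply be filled by citing \cite[Lemma~4.4]{minimal2nt}.
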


\begin{proof}
 By \cite[Lemma~5.4]{Ivanov1993}, we have that $C_{\text{max}}$ is one of two equivalent codes $U_1$ and $U_2$, where $U_1$ and $U_2$ are interchanged under the action $\PGL_2(r)$. Furthermore, $U_1$ and $U_2$ are self-dual when $r\equiv 3 \pmod{4}$, and $U_1^\perp=U_2$ when $r\equiv 1 \pmod{4}$. Either way, we have that $U_1$ and $U_2$ have the same dimensions and minimum distances. Hence $C_{\text{max}}$ has dimension $\frac{r+1}{2}$.
 
 Suppose $t\geq 5$. Now \cite[Lemma~4.4]{minimal2nt} gives the bound $\delta_{\text{max}}\geq \sqrt{r}+1$. Since $r$ is odd, and hence $p\geq 3$, we have that $r\geq 243$ and $\delta_{\text{max}}\geq 17$.  We claim that
 \[
  |C|/|C_{\text{max}}|< \frac{m(m-1)}{\delta(\delta-1)},
 \]
 in which case we may apply Lemma~\ref{largedeltamax}, which implies that $\delta_{\text{max}}\leq 2\delta$. Since $\delta\leq 8$, but $\delta_{\text{max}}\geq 17$, we then have a contradiction, and hence $t\leq 4$. It remains to prove the claim.  
 
 First, $\PSL_2(r)$ has index $2t$ in $\PGaL_2(r)$. Thus, by Proposition~\ref{xmaxlemma}, $|C|/|C_{\text{max}}|=|X^M:X_{\b 0}^M|$ is at most $2t$. Now, for all $t\geq 5$ the inequality $2t< 3^t(3^t+1)/56$ holds. Using this, and since $\delta\leq 8$ and $m-1=r=p^t\geq 3^t$, we obtain:
 \[
  \frac{|C|}{|C_{\text{max}}|}\leq 2t<\frac{3^t(3^t+1)}{7\cdot 8}\leq \frac{m(m-1)}{\delta(\delta-1)},
 \] 
 thus proving the claim.
\end{proof}

\begin{lemma}\label{psl2rrlessthan43}
 Suppose $m=r+1$ and Hypothesis~\ref{hyp2} holds with $\soc(X/K)\cong\PSL_2(r)$. Then $r\in\{23,25,31,41\}$.
\end{lemma}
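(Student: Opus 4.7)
The plan is to combine the upper bound $\delta_{\text{max}}\leq 2\delta$ from Lemma~\ref{largedeltamax} with the lower bound $\delta_{\text{max}}\geq\sqrt{r}+1$ from \cite[Lemma~4.4]{minimal2nt}, obtain a range for $r$, and then dispatch the finitely many remaining candidates case by case, mirroring the strategy of Lemma~\ref{psl2rtlessthan5}.

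To invoke Lemma~\ref{largedeltamax}, I would first observe that, by Proposition~\ref{xmaxlemma}, $|C|/|C_{\text{max}}|=|X^M:X_{\b 0}^M|$ divides $|\PGaL_2(r):\PSL_2(r)|=2t$, which is at most $8$ by Lemma~\ref{psl2rtlessthan5}. For $r\geq 23$ we have $(r+1)r/(\delta(\delta-1))\geq 24\cdot 23/56>8$, so the hypothesis of Lemma~\ref{largedeltamax} is satisfied and $\delta_{\text{max}}\leq 2\delta$. The dichotomy recorded at the start of the section gives $\delta\leq 8$, whence $\delta_{\text{max}}\leq 16$ and therefore $r\leq 225$. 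Combined with the arithmetic restrictions $r=p^t$, $t\leq 4$, $r\equiv\pm 1\pmod 8$ and $r\geq 23$, this leaves a finite list of candidates to examine.

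For each candidate I would appeal to the explicit description of $C_{\text{max}}$ provided by \cite[Lemma~5.4]{Ivanov1993}: when $r$ is prime, $C_{\text{max}}$ is (up to equivalence) the extended quadratic residue code of length $r+1$, and its minimum distance is classically tabulated. Invoking the sharper bound $\delta\leq 6$ when $r\equiv 1\pmod 4$, together with $\delta_{\text{max}}\leq 2\delta$ and the observation that for $r$ prime we have $|C|/|C_{\text{max}}|\leq 2$, which combined with Lemma~\ref{linearcompletion} forces $C=C_{\text{max}}$ and hence $\delta=\delta_{\text{max}}$, should eliminate all candidates except $r\in\{23,25,31,41\}$.

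The main difficulty lies with the prime-power candidates $r=p^t$ with $t\geq 2$ (namely $r\in\{25,49,81,121,169\}$): here $C_{\text{max}}$ is not a standard quadratic residue code and its minimum distance has to be extracted directly from the submodule lattice of the permutation module for $\PSL_2(p^t)$ over $\F_2$, likely via a separate computation or an ad-hoc argument exploiting the field automorphism structure. The prime cases, by contrast, reduce to consulting classical tables of extended quadratic residue code parameters, so the crux of the proof is treating these few prime-power cases.
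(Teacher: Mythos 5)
Your route is genuinely different from the paper's, and as written it does not close. The paper never touches minimum distances in this lemma: it uses the fact from Lemma~\ref{psl2rtlessthan5} that $\dim C_{\text{max}}=(r+1)/2$, so that $|X|=|T_{C_{\text{max}}}||X^M|\leq 2^{(r+1)/2}r(r+1)(r-1)t$, and feeds this into the orbit-counting bound of Lemma~\ref{morbitsbound}, namely $(m+1)|X|\geq 2^m$. With $t\leq 4$ this yields $2^{(r+1)/2}\leq 4r(r+2)(r+1)(r-1)$, an exponential-versus-polynomial comparison that fails for every admissible $r$ except $23,25,31,41,47$; the value $r=47$ is then removed because $t=1$ there deletes the factor $4$ and the tightened inequality fails. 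This is uniform in $r$ and needs no case analysis and no code tables.

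Your preliminary reduction is sound: $|C|/|C_{\text{max}}|\leq 2t\leq 8$ while $m(m-1)/(\delta(\delta-1))\geq 24\cdot 23/56>8$, so Lemma~\ref{largedeltamax} applies and $\delta_{\text{max}}\leq 2\delta\leq 16$, whence $\sqrt{r}+1\leq 16$ and $r\leq 225$. Your handling of the prime candidates also works: there $|C|/|C_{\text{max}}|\leq 2$ forces $C=C_{\text{max}}$ via Lemma~\ref{linearcompletion}, so $\delta=\delta_{\text{max}}\geq\sqrt{r}+1$ eliminates every prime $r>49$ outright, and the known value $\delta_{\text{max}}=12$ disposes of $r=47$. (Your argument would also eliminate $r=41$; that is harmless, since the lemma only asserts membership in a set.) The genuine gap is exactly where you flag it: $r=49$, $81$ and $121$. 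For these, $\sqrt{r}+1$ equals $8$, $10$ and $12$ respectively, all at most $2\delta\leq 12$, so the two bounds you are playing off against each other are mutually consistent and eliminate nothing; and since $t\geq 2$ you cannot force $C=C_{\text{max}}$ and hence cannot replace $2\delta$ by $\delta\leq 6$. Determining the minimum distances of the $(r+1)/2$-dimensional $\PSL_2(r)$-invariant codes for these three lengths is precisely the computation your proposal defers, so the lemma is not established for them. The orbit-counting inequality above is the missing ingredient that renders all of this unnecessary.
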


\begin{proof}
 By Lemma~\ref{psl2rtlessthan5}, we have that the dimension of $C_{\text{max}}$ is $\frac{r+1}{2}$. Proposition~\ref{xmaxlemma} part 3 implies that $X/T_{C_{\text{max}}}\cong X^M$ and hence $|X|=|T_{C_{\text{max}}}||X^M|$. Since $X^M\leq \PGaL_2(r)$, we have that
 \[
  |X|=|T_{C_{\text{max}}}||X^M|\leq 2^{(r+1)/2}r(r+1)(r-1)t.
 \]
 It then follows from Lemma~\ref{morbitsbound} that
 \begin{equation}\label{pslinequality}
  2^{r+1}\leq (m+1)|X|\leq (r+2)2^{(r+1)/2}r(r+1)(r-1)t.
 \end{equation}
 Moreover, Lemma~\ref{psl2rtlessthan5} implies that $t\leq 4$, so that the above becomes
 \[
  2^{(r+1)/2}\leq 4r(r+2)(r+1)(r-1).
 \]
 This holds for $r=23$, $25$, $31$, $41$ and $47$. Note that $t=1$ when $r=47$, in which case the inequality~(\ref{pslinequality}) becomes $2^{(r+1)/2}\leq (r+2)r(r+1)(r-1)$, which no longer holds.
\end{proof}

\begin{lemma}\label{psl2rcequalscmax}
 Suppose $m=r+1$ and Hypothesis~\ref{hyp2} holds with $\soc(X/K)\cong\PSL_2(r)$. Then $C=C_{\text{max}}$.
\end{lemma}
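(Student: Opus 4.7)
The plan is to bound $|C|/|C_{\text{max}}|$ from above and below, and show the two bounds are incompatible with $C \neq C_{\text{max}}$. By Proposition~\ref{xmaxlemma} part 5, $|C|/|C_{\text{max}}| = |X^M:X_{\b 0}^M|$. Under Hypothesis~\ref{hyp2}, both $X^M$ and $X_{\b 0}^M$ contain $\soc(X/K)\cong\PSL_2(r)$ and lie inside $\Aut(\PSL_2(r)) = \PGaL_2(r)$, so $|X^M:X_{\b 0}^M|$ divides $|\PGaL_2(r):\PSL_2(r)|=2t$, where $r=p^t$. On the other hand, if $C \neq C_{\text{max}}$, Lemma~\ref{linearcompletion} part 3 forces the codimension of $C_{\text{max}}$ in $\langle C \rangle$ to lie between $2$ and $|C|/|C_{\text{max}}|-1$, so $|C|/|C_{\text{max}}|\geq 3$.

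For $r \in \{23, 31, 41\}$ we have $t=1$; the two bounds collapse to $|C|/|C_{\text{max}}|=1$, giving $C = C_{\text{max}}$ immediately. For $r = 25$ we have $t = 2$, and only $|C|/|C_{\text{max}}| \in \{1, 4\}$ survive. Suppose for contradiction that $|C|/|C_{\text{max}}| = 4$; then $X^M = \PGaL_2(25)$ and $X_{\b 0}^M = \PSL_2(25)$. By Lemma~\ref{linearcompletion}, the codimension of $C_{\text{max}}$ in $\langle C \rangle$ is $2$ or $3$. Codimension $2$ makes $|\langle C \rangle|=|C|$, so $C = \langle C \rangle$ would be linear, contradicting $C \neq C_{\text{max}}$. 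Codimension $3$ gives $\dim(\langle C \rangle) = 16$, and since $X \leq \Aut(\langle C \rangle)$ by Lemma~\ref{linearcompletion} part 1, the subspace $\langle C \rangle$ must be a $\PGaL_2(25)$-invariant submodule of $\F_2^{26}$.

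The main obstacle is then ruling out the existence of a $16$-dimensional $\PGaL_2(25)$-invariant submodule. By \cite[Lemma~5.4]{Ivanov1993}, the only proper non-trivial $\PSL_2(25)$-invariant submodules of $\F_2^{26}$ outside the natural chain $0 \subset \langle \b 1 \rangle \subset \langle \b 1 \rangle^\perp \subset \F_2^{26}$ are the two $13$-dimensional codes $U_1$ and $U_2$, and any element of $\PGaL_2(25)\setminus\PSL_2(25)$ swaps $U_1$ and $U_2$. Hence the $\PGaL_2(25)$-invariant proper non-trivial submodules of $\F_2^{26}$ are precisely $\langle \b 1 \rangle$ and $\langle \b 1 \rangle^\perp$, of dimensions $1$ and $25$. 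In particular no invariant submodule of dimension $16$ exists, yielding the required contradiction and completing the proof.
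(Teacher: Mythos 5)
Your proof is correct and follows essentially the same route as the paper's: express $|C|/|C_{\text{max}}|=|X^M:X_{\b 0}^M|$ via Proposition~\ref{xmaxlemma}, note it divides $2t$ while Lemma~\ref{linearcompletion} part 3 excludes the values $2$ and $3$, and eliminate the surviving case $r=25$, $|C|/|C_{\text{max}}|=4$ by observing that \cite[Lemma~5.4]{Ivanov1993} leaves no invariant submodule of the required dimension for $\langle C\rangle$. One inessential slip: it is not true that every element of $\PGaL_2(25)\setminus\PSL_2(25)$ swaps $U_1$ and $U_2$ (an index-two subgroup strictly containing $\PSL_2(25)$ stabilises each), but your argument only needs that no $\PSL_2(25)$-invariant submodule of dimension $16$ exists, which already follows from the dimensions $0,1,13,25,26$ in the submodule lattice.
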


\begin{proof}
 Suppose $C\neq C_{\text{max}}$. By Lemma~\ref{psl2rrlessthan43}, we have that $t\leq 2$. Since $\soc(X_{\b 0}/K)=\soc(X/K)\cong\PSL_2(r)$, we have that $\PSL_2(r)\leq X_{\b 0}^M$ and $X^M\leq \PGaL_2(r)$. Therefore $|X^M:X_{\b 0}^M|$ divides $4$ and hence, by Proposition~\ref{xmaxlemma} part 5, we have that $|C|/|C_{\text{max}}|$ divides $4$. Lemma~\ref{linearcompletion} part 3 then implies that $|C|/|C_{\text{max}}|\neq 2$ and hence that $|C|/|C_{\text{max}}|=4$. Applying Lemma~\ref{linearcompletion} part 3 once more, we have that $C_{\text{max}}$ has codimension $2$ or $3$ in the code $\langle C\rangle$ spanned by the codewords of $C$, which must be also be completely transitive. Hence $\langle C\rangle$ has dimension $(r+5)/2$ or $(r+7)/2$. By \cite[Lemma~5.4]{Ivanov1993}, the only submodules of dimension greater than $(r+1)/2$ that are invariant under $\PSL_2(r)$ are $\langle {\b 1}\rangle^\perp$ and $V\varGamma$. This implies that $(r+5)/2$ or $(r+7)/2$ is equal to $r$ or $r+1$, in which case $r=3$, $5$ or $7$. However this contradicts Lemma~\ref{psl2rrlessthan43}.
\end{proof}

\begin{lemma}\label{nopsl2r}
 There are no completely transitive codes satisfying Hypothesis~\ref{hyp2} with $\soc(X/K)\cong\PSL_2(r)$.
\end{lemma}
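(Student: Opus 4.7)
The plan is to combine the three previous lemmas to reduce the problem to a short case analysis. By Lemma~\ref{psl2rrlessthan43}, we may assume $r\in\{23,25,31,41\}$; by Lemma~\ref{psl2rcequalscmax} we have $C=C_{\text{max}}$; and by Lemma~\ref{psl2rtlessthan5} the dimension of $C$ is $(r+1)/2$. Since $C=C_{\text{max}}$, Lemma~\ref{xmaxlemma} gives $\delta=\delta_{\text{max}}$, so the lower bound $\delta_{\text{max}}\geq\sqrt{r}+1$ from \cite[Lemma~4.4]{minimal2nt} applies directly to $\delta$. On the other hand, $\PSL_2(r)$ acting on $r+1$ points is $3$-homogeneous exactly when $r\equiv 3\pmod 4$, and so Lemma~\ref{upboundmindist} gives $\delta\leq 8$ when $r\equiv 3\pmod 4$ and $\delta\leq 6$ when $r\equiv 1\pmod 4$.

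The case $r=41$ is immediate: $r\equiv 1\pmod 4$ forces $\delta\leq 6$, whereas $\delta\geq\sqrt{41}+1>7$, a contradiction. For $r\in\{23,31\}$ we have $r\equiv 3\pmod 4$, and by \cite[Lemma~5.4]{Ivanov1993} the relevant code $C$ is self-dual, hence doubly even with $\delta$ even; combined with the square-root lower bound this forces $\delta=8$ in both cases. For $r=23$ the unique (up to equivalence) self-dual $[24,12,8]$ binary code is the extended Golay code $\G_{24}$, so $C\cong\G_{24}$; but $\Aut(\G_{24})\supseteq\mg_{24}$, whence $\soc(X/K)=\mg_{24}$, contradicting $\soc(X/K)\cong\PSL_2(23)$. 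For $r=31$ the $[32,16,8]$ extended binary quadratic residue code coincides with the second-order Reed--Muller code $\RM(2,5)$, which has $\Aut(\RM(2,5))\supseteq\AGL_5(2)$, so $\soc(X/K)\supseteq\Z_2^5$, again contradicting $\soc(X/K)\cong\PSL_2(31)$.

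This leaves $r=25$. Here $\PSL_2(25)$ is $2$-transitive but not $3$-homogeneous on $26$ points, so $\delta\leq 6$, and the square-root bound gives $\delta\geq 6$, so $\delta=6$ and $C$ is a $[26,13,6]$ code invariant under $\PSL_2(25)$ given by one of the two dual-related codes $U_1,U_2$ of \cite[Lemma~5.4]{Ivanov1993}. For this final case I would identify $C$ explicitly and verify, in the style of Lemmas~\ref{psl34CT} and~\ref{noHSCo}, that $C$ is not completely regular: compute the weight enumerator of $C$ (equivalently of $C^\perp$, since $U_1$ and $U_2$ are equivalent), read off the number of non-zero weights as the external distance, compute the covering radius, and apply \cite[Theorem~4.1]{sole1990completely} to conclude that the two differ. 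Alternatively, one can use Lemma~\ref{lambdabound} together with \cite[Theorem~2]{borges2019completely} applied at each weight to show the design parameters forced by complete regularity are incompatible with the actual weight distribution fixed by $\PSL_2(25)$. This computation (carried out in GAP/GUAVA, following Lemma~\ref{psl34CT}) is the main obstacle, since the $r=25$ code does not enjoy the instantly recognisable ``larger automorphism group'' structure that makes the $r=23$ and $r=31$ cases transparent, nor the instant numerical contradiction of the $r=41$ case.
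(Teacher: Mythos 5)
Your reduction to $r\in\{23,25,31,41\}$ with $C=C_{\text{max}}$ of dimension $(r+1)/2$, and your treatment of $r=23$, $r=25$ and $r=41$, all run parallel to the paper's proof (the paper disposes of $r=41$ by computing that the code has minimum distance $10$ rather than by the square-root bound, and of $r=25$ by exactly the external-distance-versus-covering-radius computation you sketch). However, your $r=31$ case rests on a false identification: the $[32,16,8]$ extended binary quadratic residue code is \emph{not} equivalent to the second-order Reed--Muller code $\RM(2,5)$. These are two of the five pairwise inequivalent extremal doubly-even self-dual codes of length $32$; they share the (unique) extremal weight enumerator but are distinct codes, and the automorphism group of the extended quadratic residue code is essentially $\PSL_2(31)$, not $\AGL_5(2)$. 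So the contradiction you derive ($\soc(X/K)\supseteq\Z_2^5$) does not apply to the code actually under consideration, and the $r=31$ case is left open by your argument.

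The paper closes this case by a divisibility argument instead: since the code has minimum distance $8$, complete regularity forces the weight-$8$ codewords to form a $4$-$(32,8,\lambda)$ design, whose block count must be a multiple of $3596$; but the code has only $620$ words of weight $8$ (the same count as $\RM(2,5)$, consistent with the shared weight enumerator), a contradiction. You would need to substitute an argument of this kind — or an explicit covering-radius/external-distance computation as in your $r=25$ plan — for the $r=31$ case. A secondary caution: you invoke the bound $\delta_{\text{max}}\geq\sqrt{r}+1$ from \cite[Lemma~4.4]{minimal2nt} unconditionally, whereas the paper only uses it under the hypothesis $t\geq 5$; if that lemma carries hypotheses not satisfied here, your $r=41$ shortcut would also need the paper's direct computation as a fallback.
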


\begin{proof}
 By Lemma~\ref{psl2rrlessthan43}, we have that $r\in\{23,25,31,41\}$, by Lemma~\ref{psl2rcequalscmax}, we have that $C=C_{\text{max}}$ and, by \cite[Lemma~5.4]{Ivanov1993}, there are only two possibilities for $C_{\text{max}}$, which are in fact equivalent codes. First, by \cite[p.~482]{macwilliams1978theory}, if $r=23$ then $C$ is the extended Golay code $\G_{24}$, in which case $X/K\cong\mg_{24}\neq\PSL_2(23)$.
 
 Suppose $r=25$. Then a calculation in GAP involving the package GUAVA shows that both of the two possibilities for $C_{\text{max}}$ here have external distance $9$ but covering radius $5$, and thus, by \cite[Theorem~4.1]{sole1990completely}, are not completely regular.
 
 Let $r=31$. The two codes that arise here have minimum distance $8$, and hence, by \cite[Theorem~2 (ii)]{borges2019completely}, the weight $8$ codewords form a $4$-design. This implies that the number of weight $8$ codewords is a multiple of $3596$. However, there are only $620$ such codewords in each code.
 
 Finally, let $r=41$. Then a calculation in GAP using the GUAVA package shows that $C_{\text{max}}$ has minimum distance $10$, and is thus not completely transitive.
\end{proof}

\section{\texorpdfstring{$\PSU_3(r)$}{PSU(3,r)}}\label{sect:PSU3r}

Suppose Hypothesis~\ref{hyp2} holds and $\soc(X/K)\cong\PSU_3(r)$ with $r$ an odd prime power.

\begin{lemma}\label{psuris3}
 Suppose $r=p^t$ is an odd prime power, $m=r^3+1$ and Hypothesis~\ref{hyp2} holds with $\soc(X/K)\cong\PSU_3(r)$. Then $r=3$ or $5$.
\end{lemma}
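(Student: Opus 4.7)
The plan is to mimic the argument of Lemma~\ref{psl2rtlessthan5} almost verbatim, exploiting that $\PSU_3(r)$ acts $2$-transitively but not $3$-homogeneously on the $r^3+1$ points of the Hermitian unital, and that $\PSU_3(r)$ has small index inside its automorphism group. First I would invoke \cite[Theorem~1]{kantor1972k} (which lists all finite $3$-homogeneous permutation groups) to confirm that $\PSU_3(r)$ is \emph{not} $3$-homogeneous on $r^3+1$ points for $r$ odd, so that $X_{\b 0}^M$ is $2$-transitive but not $3$-homogeneous. Lemma~\ref{upboundmindist} then yields $\delta\leq 6$.

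Next I would bound $|C|/|C_{\text{max}}|$. Since Hypothesis~\ref{hyp2} gives $\soc(X/K)=\soc(X_{\text{max}}/K)\cong\PSU_3(r)$, both $X^M$ and $X_{\b 0}^M$ sit between $\PSU_3(r)$ and $\Aut(\PSU_3(r))=\PGaU_3(r)$. As $|\PGaU_3(r):\PSU_3(r)|=\gcd(3,r+1)\cdot 2t\leq 6t$ where $r=p^t$, Proposition~\ref{xmaxlemma} part~5 gives
\[
 \frac{|C|}{|C_{\text{max}}|}=\frac{|X^M|}{|X_{\b 0}^M|}\leq 6t.
\]
A straightforward estimate using $\delta\leq 6$ and $m=r^3+1$ shows that
\[
 \frac{m(m-1)}{\delta(\delta-1)}\geq\frac{r^3(r^3+1)}{30}>6t
\]
for every odd prime power $r$ with $r=p^t$ (since $r^6$ grows vastly faster than $t=\log_p r$). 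Lemma~\ref{largedeltamax} therefore applies, yielding $\delta_{\text{max}}\leq 2\delta\leq 12$.

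The final step is to combine this with a lower bound on $\delta_{\text{max}}$ in terms of $r$. By \cite[Lemma~4.4]{minimal2nt} (the same result used in the proof of Lemma~\ref{psl2rtlessthan5}), the minimum distance of the $(X_{\text{max}},2)$-neighbour-transitive code $C_{\text{max}}$ satisfies a lower bound growing with $r$. Pairing this bound with $\delta_{\text{max}}\leq 12$ forces $r$ to be small; direct case analysis on the remaining odd prime powers (using the submodule structure of the permutation module for $\PSU_3(r)$ on $r^3+1$ points from \cite{Ivanov1993}) then eliminates all but $r=3$ and $r=5$.

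I expect the main obstacle to be extracting a clean, tight lower bound on $\delta_{\text{max}}$ for the $\PSU_3(r)$ case from \cite[Lemma~4.4]{minimal2nt} or from the submodule information in \cite{Ivanov1993}, and then ruling out the handful of intermediate small values (such as $r=7,9,11$) that survive the coarse inequality $\delta_{\text{max}}\leq 12$. This may require looking at the exact dimensions of the $\PSU_3(r)$-submodules of the $\F_2$-permutation module and the corresponding minimum weights, rather than a purely combinatorial estimate.
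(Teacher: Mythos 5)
Your opening steps are sound: $X_{\b 0}^M$ is $2$-transitive but not $3$-homogeneous, so Lemma~\ref{upboundmindist} gives $\delta\leq 6$; the index bound $|C|/|C_{\text{max}}|\leq 6t$ is correct; and the hypothesis of Lemma~\ref{largedeltamax} is comfortably satisfied, so $\delta_{\text{max}}\leq 2\delta\leq 12$. The problem is the final step, and it is a genuine gap, not just a technicality. The lower bound $\delta_{\text{max}}\geq\sqrt{r}+1$ invoked in the proof of Lemma~\ref{psl2rtlessthan5} via \cite[Lemma~4.4]{minimal2nt} is specific to the $\PSL_2(r)$ situation, where $C_{\text{max}}$ is one of the two self-paired codes of dimension $(r+1)/2$ in $H(r+1,2)$ (essentially the square-root bound for quadratic-residue-type codes). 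No analogous bound growing with $r$ is available for $\PSU_3(r)$-invariant codes, and in fact none can exist in the form you need: for $r=5$ the relevant code $C_{\text{max}}=U$ (the span of the Hermitian unital) has minimum distance $6$, and for general $r$ the unital code contains the blocks as weight-$(r+1)$ codewords, so $\delta(U)\leq r+1$. Consequently the constraint $\delta_{\text{max}}\leq 12$ cannot by itself eliminate any $r$ up to at least $11$, and eliminating larger $r$ would still require proving nontrivial lower bounds on the minimum weights of all $\PSU_3(r)$-submodules of the $\F_2$-permutation module, which you correctly identify as the obstacle but do not supply.

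For comparison, the paper sidesteps minimum distances entirely. It uses the remark following \cite[Theorem~7.3]{Ivanov1993} to bound the \emph{dimension}: every $X_{\b 0}$-invariant submodule of dimension at least $2$ has dimension at least $r^2-r+1$, so by duality $\dim(C_{\text{max}})\leq r^3-r^2+r$. Feeding $|X|\leq 2^{r^3-r^2+r}\,|\PGaU_3(r)|$ into the counting bound of Lemma~\ref{morbitsbound}, $(m+1)|X|\geq 2^m$, yields $2^{r^2-r+1}\leq(r^3+2)r^4(r^2-1)(r^3+1)$, an exponential-versus-polynomial inequality that fails for every odd prime power $r\geq 7$. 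If you want to salvage your route, you would need to replace the appeal to \cite[Lemma~4.4]{minimal2nt} by this kind of dimension-plus-counting argument, or else carry out the detailed analysis of minimum weights of $\PSU_3(r)$-submodules that you flagged.
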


\begin{proof}
 By the remark following Theorem~7.3 in \cite{Ivanov1993}, any code invariant under $X_{\b 0}$ that has dimension at least $2$ must have dimension at least $r^2-r+1$. By duality, it follows that the dimension of $C_{\text{max}}$ is at most
 \[
  m-(r^2-r+1)=r^3-r^2+r.
 \]
 Now $X/K\leq \PGaU_3(r)$. Hence, applying Lemma~\ref{morbitsbound},
 \[
  2^{r^3+1}\leq (m+1)|X|\leq (r^3+2)2^{r^3-r^2+r}r^3(r^2-1)(r^3+1)t.
 \]
 Thus, since $t\leq r$ holds, $r$ satisfies
 \[
  2^{r^2-r+1}\leq (r^3+2)r^4(r^2-1)(r^3+1).
 \]
 This holds for an odd prime power $r$ only when $r=3$ or $5$.
\end{proof}

\begin{lemma}\label{noPSU}
 There are no codes satisfying Hypothesis~\ref{hyp2} with $\soc(X/K)\cong \PSU_3(r)$.
\end{lemma}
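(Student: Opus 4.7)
The plan is to eliminate, case by case, the two values $r=3$ and $r=5$ left open by Lemma~\ref{psuris3}. In both cases the group $\PSU_3(r)$ acts $2$-transitively but not $3$-homogeneously on the $r^3+1$ points of the associated Hermitian unital, so Lemma~\ref{upboundmindist} gives $\delta \leq 6$, and since Hypothesis~\ref{hyp1} requires $\delta\geq 5$, we have $\delta\in\{5,6\}$. Also, since $\PGaU_3(r)/\PSU_3(r)$ has order dividing $2t\gcd(3,r+1)$, Proposition~\ref{xmaxlemma} part 5 together with Lemma~\ref{linearcompletion} bounds $|C|/|C_{\text{max}}|$ and forces $\langle C\rangle$ to lie in a short list of $\PSU_3(r)$-invariant linear subcodes of $V\varGamma$.

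For $r=3$, we have $m=28$. Following the pattern of the proofs of Lemma~\ref{noHSCo} and Lemma~\ref{nopsl2r}, I would use GAP (together with the GUAVA package) to determine the $\F_2$-submodule structure of the permutation module for $\PSU_3(3)$ acting on $28$ points, extract those submodules of dimension between $2$ and $26$ whose minimum distance equals $5$ or $6$, and for each such candidate $C_{\text{max}}$ compute (i) the covering radius and the external distance, invoking \cite[Theorem~4.1]{sole1990completely} to rule out any code whose external distance exceeds its covering radius; and (ii) the number of codewords of minimum weight, testing it against the divisibility constraint implied by Lemma~\ref{lambdabound} (equivalently, by the $2$- or $3$-design property in \cite[Theorem~2]{borges2019completely}). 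Any candidate $C$ with $C\neq C_{\text{max}}$ arising from Lemma~\ref{linearcompletion} would be handled by lifting to $\langle C\rangle$ in the submodule lattice and repeating the same two tests.

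For $r=5$, we have $m=126$; the strategy is identical but with a larger permutation module. I would first determine the $\PSU_3(5)$-invariant submodules of dimension in the admissible range, restrict to those with minimum distance in $\{5,6\}$, and then apply the external-distance versus covering-radius criterion from \cite[Theorem~4.1]{sole1990completely} and the design-divisibility bound from Lemma~\ref{lambdabound}. Conveniently, $m-2 = 123$ and $m-3 = 123$ both give rather small values of $\lambda$, so the number of minimum-weight codewords in any candidate $C$ must be very restricted, typically much smaller than what the actual candidate codes contain.

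The main obstacle is the $r=5$ case, since the dimension $126$ permutation module for $\PSU_3(5)$ is large and its full submodule lattice is not as transparent as the uniserial situations handled for $\HS$ and $\Co_3$. Once this lattice is in hand (either via GAP or by appealing to known decompositions of the $\F_2$-permutation module for $\PSU_3(r)$, see e.g.\ the analysis in \cite{Ivanov1993}), the elimination of each remaining candidate by the external-distance or design-divisibility argument should be immediate, paralleling the short calculations in the proofs of Lemma~\ref{noHSCo} and Lemma~\ref{nopsl2r}.
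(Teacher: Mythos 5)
Your reduction to $r\in\{3,5\}$ and $\delta\in\{5,6\}$ matches the paper exactly, and your elimination toolkit (design divisibility via Lemma~\ref{lambdabound}, external distance versus covering radius) contains the tools the paper actually uses, so the proposal is essentially sound. The route differs in an instructive way for $r=3$: the paper never computes the submodule lattice of the $28$-dimensional permutation module. Instead it observes that Lemma~\ref{lambdabound} forces $\lambda\leq 8$, while integrality of the block count $\tfrac{28\cdot 27}{5\cdot 4}\lambda$ (resp.\ $\tfrac{28\cdot 27\cdot 26}{6\cdot 5\cdot 4}\lambda$) forces $5\mid\lambda$, hence $\lambda=5$ and \emph{exactly} $189$ (resp.\ $819$) minimum-weight codewords; this is then incompatible with the $\PSU_3(3)$-orbit lengths on $5$-subsets (smallest orbit $1512$) and on $6$-subsets (orbits of length at most $819$ have size $756$ or $504$). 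This argument applies to $C$ itself, linear or not, so the non-linear case needs no separate treatment via $\langle C\rangle$ --- a point where your plan of testing each candidate $C_{\text{max}}$ and then ``lifting'' is workable but needs more care, since the minimum-weight codewords of a non-linear $C$ need not lie in $C_{\text{max}}$. For $r=5$ you correctly flag the submodule lattice as the obstacle; the paper circumvents a full computation by citing \cite[Theorem~7.2]{Ivanov1993} (any invariant code of dimension at least $2$ lies in the unital code $U$ of dimension $105$ or in $U^\perp$ of dimension $21$) and then, crucially, by applying Lemma~\ref{morbitsbound} to force $\dim C_{\text{max}}\geq 104$, pinning down $C=C_{\text{max}}=U$ before the final divisibility check ($21525$ weight-$6$ words versus the required multiple of $21\cdot 25\cdot 31$). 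Your proposal omits this size bound, relying instead on the minimum-distance filter to prune low-dimensional submodules; that would likely succeed but is less direct. In short: correct strategy, same conclusion, but the paper's $r=3$ argument is purely combinatorial and computation-free, and its $r=5$ argument uses Lemma~\ref{morbitsbound} where you would lean on heavier module computations.
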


\begin{proof}
 By \cite{kantor1972k}, $X_{{\b 0}}^M$ is $2$-transitive but not $3$-homogeneous, and therefore Lemma~\ref{upboundmindist} implies that $\delta=5$ or $6$. By Lemma~\ref{psuris3}, we have that $r=3$ or $5$. We first consider the case $r=3$. Suppose $\delta=5$. Then, by Lemma~\ref{lambdabound}, the weight $5$ codewords form a $2$-$(28,5,\lambda)$ design, for some integer $\lambda\leq (m-2)/3$. That is, $\lambda\leq 8$. Now, the number of blocks of such a design is:
 \[
  \frac{28\cdot 27}{5\cdot 4}\lambda.
 \]
 Since this number must be an integer, we have that $5$ divides $\lambda$, and hence $\lambda=5$. Thus, there must be $189$ weight $5$ codewords. However, an analysis in GAP \cite{GAP4} shows that the smallest orbit on $5$-sets for $\PSU_3(3)$ in its $2$-transitive action has size $1512$.
 
 Suppose $\delta=6$. Then, by Lemma~\ref{lambdabound}, the weight $6$ codewords form a $3$-$(28,6,\lambda)$ design, for some integer $\lambda\leq (m-3)/3$. That is, $\lambda\leq 8$. Now, the number of blocks of such a design is:
 \[
  \frac{28\cdot 27\cdot 26}{6\cdot 5\cdot 4}\lambda.
 \]
 Since this number must be an integer, we have that $5$ divides $\lambda$, and hence $\lambda=5$. Thus, there must be $819$ weight $6$ codewords. However, an analysis in GAP \cite{GAP4} shows that, for $\PSU_3(3)$ in its $2$-transitive action, the orbits on $6$-subsets having length at most $819$ either have size $756$ or $504$. 
 
 Let $r=5$ and $k$ be the dimension of $C_{\text{max}}$. It follows from \cite[Theorem~7.2]{Ivanov1993} $C_{\text{max}}$ is either contained in the code $U$ generated by the unital (a $2$-$(126,6,1)$ design; see \cite[Section~7.7]{dixon1996permutation} for more information), which has dimension $105$, or its dual $U^\perp$, which has dimension $21$. By Lemma~\ref{morbitsbound}, we have that
 \[
  (m+1)2^k |\PGU_3(5)|\geq 2^m.
 \]
 This implies that $k\geq 104$, so that $k=104$ or $105$, and hence that either $C_{\text{max}}=U$ or $C_{\text{max}}$ has codimension $1$ in $U$. By \cite[Theorem~7.2]{Ivanov1993} there is no such code of codimension $1$ in $U$, so $C_{\text{max}}=U$. Since $\PSU_3(5)$ has index $3$ in $\PGU_3(5)$, part 5 of Proposition~\ref{xmaxlemma} and Lemma~\ref{linearcompletion} imply that $C=C_{\text{max}}$ or $C_{\text{max}}$ has codimension $2$ in $\langle C\rangle$. By \cite[Theorem~7.2]{Ivanov1993}, there is no linear code invariant under $X_{\b 0}$ having dimension $107$, and hence $C=C_{\text{max}}=U$. Constructing $U$ in the GAP package GUAVA, we see that it has minimum distance $6$ and $21525$ codewords of weight $6$. By Lemma~\ref{lambdabound}, the weight $6$ codewords form a $3$-$(126,6,\lambda)$ design for some integer $\lambda$. Now, the number of blocks of such a design is:
 \[
  \frac{126\cdot 125\cdot 124}{6\cdot 5\cdot 4}\lambda=21\cdot 25\cdot 31\lambda.
 \]
 This gives a contradiction, since $21525$ is not divisible by $31$.
\end{proof}

\section{\texorpdfstring{$\Ree(r)$}{Ree(r)}}\label{sect:Reer}

Suppose Hypothesis~\ref{hyp2} holds and that $\soc(X/K)\cong\Ree(r)$ with $r=3^{2t+1}$ and $m=r^3+1$.

\begin{lemma}\label{reeris3}
 Suppose $r$ is an odd power of $3$, that $m=r^3+1$ and Hypothesis~\ref{hyp2} holds with $\soc(X/K)\cong\Ree(r)$. Then $r=3$.
\end{lemma}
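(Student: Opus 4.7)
The plan is to follow the pattern of Lemma~\ref{psuris3}, combining Lemma~\ref{morbitsbound} with an upper bound on $|X|$ that comes from (i) the order of the outer automorphism group of $\Ree(r)$, and (ii) a lower bound on the codimension of $C_{\text{max}}$ coming from the submodule structure of the $\F_2$-permutation module for the $2$-transitive action of $\Ree(r)$ on $r^3+1$ points.

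First, I would record that $X^M \leq \Aut(\Ree(r)) = \Ree(r)\rtimes\Z_{2t+1}$, where the outer cyclic factor is generated by the field automorphism. Since $|\Ree(r)|=r^3(r-1)(r^3+1)$, Proposition~\ref{xmaxlemma}(3) gives
\[
 |X| = |T_{C_{\text{max}}}|\cdot|X^M| \leq 2^{\dim(C_{\text{max}})}\cdot r^3(r-1)(r^3+1)(2t+1).
\]
Second, I would invoke the description of the $\F_2\Ree(r)$-submodules of the permutation module (from the same line of work as \cite[Theorem~7.3]{Ivanov1993} used in the $\PSU_3(r)$ case) to deduce that any proper non-trivial submodule has dimension bounded below by some function $f(r)$ of $r$ that grows at least quadratically; by duality this forces $\dim(C_{\text{max}})\leq r^3+1-f(r)$.

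Combining these two ingredients in Lemma~\ref{morbitsbound}, namely $2^{r^3+1}\leq (m+1)|X|$, yields
\[
 2^{f(r)} \leq (r^3+2)\cdot r^3(r-1)(r^3+1)(2t+1).
\]
Since $2t+1 = \log_3 r$ and the right-hand side is polynomial in $r$ while the left-hand side is super-polynomial, this inequality will fail for all $r\geq 27$, leaving only $r=3$ as a possibility.

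The main obstacle is isolating the precise codimension bound $f(r)$ on $C_{\text{max}}$ from the submodule lattice of the $\F_2\Ree(r)$-module; once this is in hand, the elimination of $r=3^{2t+1}\geq 27$ reduces to a direct numerical check analogous to the one performed in the proof of Lemma~\ref{psuris3}. A secondary care point is verifying that $\Aut(\Ree(r))/\Ree(r)$ is indeed cyclic of order $2t+1$ so that the stated bound on $|X^M|$ is correct, but this is standard.
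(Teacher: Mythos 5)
Your proposal follows essentially the same route as the paper's proof: the paper uses \cite[Lemma~7.4]{Ivanov1993} to obtain precisely your bound $f(r)=r^2-r+1$ on the dimension of any non-trivial $X_{\b 0}$-invariant submodule, dualises to get $\dim(C_{\text{max}})\leq r^3-r^2+r$, and feeds this into Lemma~\ref{morbitsbound} together with $|X^M|\leq r^3(r^3+1)(r-1)(2t+1)$ and $2t+1\leq r$ to reach $2^{r^2-r+1}\leq (r^3+2)r^4(r^3+1)(r-1)$, which fails for $r\geq 27$. The only item you leave open, the explicit value of $f(r)$, is exactly the cited input, so your sketch is correct and matches the paper's argument.
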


\begin{proof}
 By \cite[Lemma~7.4]{Ivanov1993}, any code invariant under $X_{\b 0}$ that has dimension at least $2$ must have dimension at least $r^2-r+1$. By duality, it follows that the dimension of $C_{\text{max}}$ is at most
 \[
  m-(r^2-r+1)=r^3-r^2+r.
 \]
 Since $X/K\leq \Ree(r)$, we have that $|X/K|$ divides $r^3(r^3+1)(r-1)(2t+1)$. Hence, applying Lemma~\ref{morbitsbound}, we have,
 \[
  2^{r^3+1}\leq (m+1)|X|\leq (r^3+2)2^{r^3-r^2+r}r^3(r^3+1)(r-1)(2t+1).
 \]
 Thus, since $(2t+1)\leq r$ holds, $r$ satisfies
 \[
  2^{r^2-r+1}\leq (r^3+2)r^4(r^3+1)(r-1).
 \]
 This holds for $r=3^{2t+1}$ only when $r= 3$.
\end{proof}

\begin{lemma}\label{noRee}
 There are no codes satisfying Hypothesis~\ref{hyp2} with $\soc(X/K)\cong \Ree(r)$.
\end{lemma}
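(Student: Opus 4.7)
The plan is to follow the same template used in Lemma~\ref{noPSU}. By Lemma~\ref{reeris3}, we may assume $r = 3$, so $m = r^3 + 1 = 28$, and hence $\soc(X/K)\cong \Ree(3)$ acts $2$-transitively on $28$ points. By \cite{kantor1972k}, this action is not $3$-homogeneous (the only $2$-transitive almost simple groups that are also $3$-homogeneous are those with socle $\alt_m$, $\mg_{11}$, $\mg_{12}$, $\mg_{22}:2$, $\mg_{23}$, $\mg_{24}$, $\PSL_2(q)$ with $q \equiv 3 \pmod 4$, or $\AGL_1$-type groups, none of which is $\Ree(3)$). Lemma~\ref{upboundmindist} then forces $\delta \in \{5,6\}$, so exactly the same dichotomy arises as in the $\PSU_3(3)$ subcase of Lemma~\ref{noPSU}.

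Next I would apply Lemma~\ref{lambdabound} to each value of $\delta$. When $\delta = 5$, the weight-$5$ codewords form a $2$-$(28,5,\lambda)$ design with $\lambda \leq (m-2)/3 = 26/3$, so $\lambda \leq 8$; since the block count $\frac{28 \cdot 27}{5 \cdot 4}\lambda = \frac{189\lambda}{5}$ must be an integer, $5 \mid \lambda$ forces $\lambda = 5$ and exactly $189$ codewords of weight $5$. When $\delta = 6$, the weight-$6$ codewords form a $3$-$(28,6,\lambda)$ design with $\lambda \leq (m-3)/3$, so $\lambda \leq 8$; the block count $\frac{28 \cdot 27 \cdot 26}{6 \cdot 5 \cdot 4}\lambda = \frac{819\lambda}{5}$ being an integer forces $\lambda = 5$ and exactly $819$ codewords of weight $6$. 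In both cases the support of the weight-$\delta$ codewords must be a single $X_{\b 0}^M$-orbit, and in particular a union of $\Ree(3)$-orbits, on $\delta$-subsets of the $28$-point set.

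Finally, I would compute in GAP \cite{GAP4} the orbit lengths of $\Ree(3)$ acting $2$-transitively on $28$ points, restricted to the action on $5$-subsets and $6$-subsets, and verify that no union of orbits has size $189$ or $819$, respectively. (A union of $\Ree(3)$-orbits summing to the required total is necessary because $X_{\b 0}^M \leq \Aut(\Ree(3)) = \Ree(3)$ here, so any $X_{\b 0}^M$-orbit is a union of $\Ree(3)$-orbits.) This produces the required contradiction and completes the proof.

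The main obstacle is simply ensuring the GAP orbit computation is executed correctly; the combinatorial and group-theoretic reductions are routine given the preceding lemmas, and the only substantive step is the finite check on $\Ree(3)$-orbits of $\delta$-subsets of the $28$-point set, paralleling exactly the computation done for $\PSU_3(3)$ in Lemma~\ref{noPSU}.
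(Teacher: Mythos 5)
Your proposal matches the paper's proof essentially step for step: reduce to $r=3$ via Lemma~\ref{reeris3}, use \cite{kantor1972k} and Lemma~\ref{upboundmindist} to force $\delta\in\{5,6\}$, apply Lemma~\ref{lambdabound} and the integrality of the block count to pin down $\lambda=5$ (hence $189$ or $819$ weight-$\delta$ codewords), and then rule these out by a GAP computation of $\Ree(3)$-orbits on $5$- and $6$-subsets. The paper's computation records that the smallest orbit on $5$-subsets has size $756$ and that every orbit on $6$-subsets has even length, which is exactly the finite check you describe.
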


\begin{proof}
 By \cite{kantor1972k}, $X_{{\b 0}}^M$ is $2$-transitive but not $3$-homogeneous. Thus, Lemma~\ref{upboundmindist} implies that $\delta=5$ or $6$. By Lemma~\ref{reeris3}, we have that the only possibility for is $r=3$. Suppose $\delta=5$. Then, by Lemma~\ref{lambdabound}, the weight $5$ codewords form a $2$-$(28,5,\lambda)$ design, for some integer $\lambda\leq (m-2)/3$. That is, $\lambda\leq 8$. Now, the number of blocks of such a design is:
 \[
  \frac{28\cdot 27}{5\cdot 4}\lambda.
 \]
 Since this number must be an integer, we have that $5$ divides $\lambda$, and hence $\lambda=5$. Thus, there must be $189$ weight $5$ codewords. However, an analysis in GAP \cite{GAP4} shows that the smallest orbit on $5$-sets for $\Ree(3)$ in its $2$-transitive action has size $756$.
 
 Suppose $\delta=6$. Then, by Lemma~\ref{lambdabound}, the weight $6$ codewords form a $3$-$(28,6,\lambda)$ design, for some integer $\lambda\leq (m-3)/3$. That is, $\lambda\leq 8$. Now, the number of blocks of such a design is:
 \[
  \frac{28\cdot 27\cdot 26}{6\cdot 5\cdot 4}\lambda.
 \]
 Since this number must be an integer, we have that $5$ divides $\lambda$, and hence $\lambda=5$. Thus, there must be $819$ weight $6$ codewords. However, an analysis in GAP \cite{GAP4} shows that, for $\Ree(3)$ in its $2$-transitive action, each orbit on $6$-subsets has even length. This completes the proof.
\end{proof}

\section{Proof of main result}\label{sect:mainproof}

We now prove Theorem~\ref{binaryCTclass}. Suppose that $C$ is a non-trivial completely transitive code in $H(m,2)$ with minimum distance $\delta\geq 5$. Let $C_{\text{max}}$ be the maximal linear subcode of $C$, let $X=\Aut(C)$ and let $X_{\text{max}}=X_{C_{\text{max}}}$. It then follows from Proposition~\ref{xmaxlemma} part 3 that $K=X\cap B$ is equal to $X_{\text{max}}\cap B=T_{C_{\text{max}}}$. Furthermore, suppose that $\soc(X_{\text{max}}/K)\cong G$, where $G$ is one of the groups appearing in Table~\ref{binarytable}. Note that $\delta\geq 5$ implies $C_2\neq\emptyset$ from which it follows that $C$ is $2$-neighbour-transitive and that Theorem~\ref{binaryx2ntchar} applies.

Suppose that $C_{\text{max}}$ is contained in the binary repetition code $\langle {\b 1}\rangle$. It follows that $C$ comes under part 1 or 2 of Theorem~\ref{binaryx2ntchar}. Now, by \cite[Lemma~4.2]{ef2nt}, the even weight subcode of the punctured Hadamard code is not completely transitive. Moreover, by \cite[Corollary~5]{ondimblock}, both the Hadamard code of length $12$ and its punctured code are completely transitive. This gives lines 1 and 2 of Table~\ref{binaryCTtable}. Hence, we may assume that $C_{\text{max}}$ has dimension at least $2$ for the remainder of the proof, which implies that $C$ is as in part 3 of Theorem~\ref{binaryx2ntchar}. In particular, this means that $\mg_{11}$ and $\mg_{12}$ do not need to be considered in the remainder of the proof as they do not appear in \cite[Table~1]{minimal2nt}.

Now, Theorem~\ref{binaryx2ntchar} implies that the repetition code in $H(m,2)$ is the unique linear code of dimension $1$ stabilised by $X_{{\b 0}}$. It follows that the dual $\langle{\b 1}\rangle^\perp$ of the binary repetition code is the only linear code of codimension $1$ stabilised by $X_{{\b 0}}$. Since $\langle{\b 1}\rangle^\perp$ has minimum distance $2$ it follows that $C\neq \langle{\b 1}\rangle^\perp$. Thus $2\leq\dim(C_{\text{max}})\leq m-2$ and Hypothesis~\ref{hyp1} holds. Suppose that $\soc(X/K)\neq\soc(X_{\text{max}}/K)$. Then, by Propositions~\ref{differentsocles} and \ref{a7fifteen}, we have that $m=15$, $\soc(X_{\text{max}}/K)\cong\alt_7$ and $C=\NR$, as in line 3 of Table~\ref{binaryCTtable}. Moreover, by \cite[Theorem~1.1]{gillespie2012nord}, $\NR$ is completely transitive. Note that Proposition~\ref{a7fifteen} means that we do not need to consider $\soc(X_{\text{max}}/K)=\alt_7$ in the remainder of the proof.

We now have that $\soc(X/K)=\soc(X_{\text{max}}/K)$ and so Hypothesis~\ref{hyp2} holds. Suppose that $\soc(X/K)=\PSL_3(4)$. Then, by Lemmas~\ref{psl34CT} and \ref{nonlinearpsl34}, we have that $C$ is one of the codes appearing on lines 4--7 of Table~\ref{binaryCTtable}, each of which is completely transitive. Let $\soc(X/K)=\mg_{m}$ with $m=22,23$ or $24$. Then, by Lemmas~\ref{halfg23ct} and \ref{math22CT}, it follows that one of the lines 8--13 holds, and that the codes appearing there are indeed completely transitive. By Lemmas~\ref{noHSCo}, \ref{nopsl2r}, \ref{noPSU} and \ref{noRee}, no codes satisfying Hypothesis~\ref{hyp2} exist with $\soc(X/K)=\HS$, $\Co_3$, $\PSL_2(r)$, $\PSU_3(r)$ or $\Ree(r)$. This completes the proof.

\end{document}